\documentclass[12pt,a4paper]{amsart}
\usepackage[a4paper,margin=30mm]{geometry}
\usepackage{amsmath,amssymb,amsthm,mathtools}
\allowdisplaybreaks
\usepackage{microtype}
\usepackage{enumitem}
\usepackage{needspace}
\usepackage[hidelinks]{hyperref}
\usepackage[nameinlink,noabbrev]{cleveref}
\hypersetup{
 pdftitle={Ledrappier--Young entropy formula for C1 diffeomorphisms with dominated splitting. Part 3: The full formula with arbitrary multiplicities},
 pdfauthor={Yao Tong},
 pdfsubject={Transverse entropy, conditional dimension, and dominated splitting},
 bookmarksnumbered=true
}
\newtheorem{theorem}{Theorem}[section]
\newtheorem{mainthm}[theorem]{Theorem}
\newtheorem{proposition}[theorem]{Proposition}
\newtheorem{lemma}[theorem]{Lemma}
\newtheorem{corollary}[theorem]{Corollary}
\theoremstyle{definition}
\newtheorem{definition}{Definition}
\theoremstyle{remark}
\newtheorem{remark}{Remark}
\crefname{mainthm}{Theorem}{Theorems}
\Crefname{mainthm}{Theorem}{Theorems}
\crefname{theorem}{Theorem}{Theorems}
\crefname{proposition}{Proposition}{Propositions}
\crefname{lemma}{Lemma}{Lemmas}
\crefname{corollary}{Corollary}{Corollaries}
\crefname{definition}{Definition}{Definitions}
\crefname{remark}{Remark}{Remarks}

\newcommand{\R}{\mathbb R}
\newcommand{\Z}{\mathbb Z}
\newcommand{\B}{\mathcal B}
\newcommand{\A}{\mathcal A}
\newcommand{\Q}{\mathcal Q}
\newcommand{\C}{\mathcal C}
\newcommand{\one}{\mathbf 1}
\newcommand{\eps}{\varepsilon}
\newcommand{\supp}{\operatorname{supp}}
\newcommand{\diam}{\operatorname{diam}}
\newcommand{\dist}{\operatorname{dist}}
\newcommand{\ld}{\underline d}
\newcommand{\ud}{\overline d}
\newcommand{\ldel}{\underline\delta}
\newcommand{\udel}{\overline\delta}
\newcommand{\Diff}{\operatorname{Diff}}
\newcommand{\tg}{\tilde g}
\newcommand{\tW}{\widetilde W}
\newcommand{\tp}{\tilde\pi}
\newcommand{\tm}{\tilde m}

\title[$C^1$ entropy formula, Part 3]
{Ledrappier--Young entropy formula for $C^1$ diffeomorphisms
with dominated splitting\\
Part 3: full formula
with arbitrary multiplicities}
\author[Yao Tong]{Yao Tong$^{1}$}
\thanks{$^{1}$\textsc{School of Mathematical Sciences, Peking University,
Beijing 100871, China.}\\
\emph{E-mail address:}\ \href{mailto:2401110024@stu.pku.edu.cn}{\texttt{2401110024@stu.pku.edu.cn}}.}
\date{}

\begin{document}
\begin{abstract}
We establish the full Ledrappier--Young entropy formula for
ergodic invariant measures of \(C^1\) diffeomorphisms when the positive
Oseledets bundles and their nonpositive complement form a
dominated splitting. We prove transverse formulas under three-block
domination and equality of metric and unstable entropy under
positive--nonpositive domination.
To complete the proof,
we develop a new mechanism for estimating partial entropy
through comparisons of conditional information
along orbits.

Applications characterize
vanishing transverse dimension, give upper pointwise dimension bounds
and exact dimensionality of hyperbolic measures. In the
quasi-isometric-center setting, this implies a \(C^1\) invariance
principle for measures with nonpositive center Lyapunov exponents.
\end{abstract}
\maketitle

\vspace*{-28pt}
\tableofcontents

\section{Introduction and main results}\label{sec:intro}

\subsection{Introduction}

  Metric entropy, Lyapunov exponents, and measure dimension
describe orbit complexity, tangent growth, and the scaling of
mass. The Margulis--Ruelle inequality bounds entropy by the
integral of the sum of the positive Lyapunov exponents, counted
with multiplicity \cite{Ruelle}. Pesin's entropy formula gives
equality for \(C^{1+\beta}\) diffeomorphisms preserving a smooth
volume, where \(\beta>0\) \cite{Pesin}; see also \cite{Mane,LS}.
Young \cite{Young} related entropy, Lyapunov exponents, and
measure dimension for \(C^{1+\beta}\) surface diffeomorphisms.

  Ledrappier and Young \cite{LY1,LY2} developed these relations
into two complementary theorems. Building on Ledrappier's work
on Sinai measures \cite{LedrappierSinai} and the work of Ledrappier and Strelcyn \cite{LS}, they proved that an
invariant probability measure of a \(C^2\) diffeomorphism of a
compact manifold satisfies Pesin's entropy formula if and only
if its unstable conditional measures are absolutely continuous
with respect to leaf volume \cite[Theorem~A]{LY1}. This gives
a geometric characterization of equality in Ruelle's inequality,
including for measures with zero Lyapunov exponents.

  Their second paper gives an entropy formula for arbitrary
ergodic invariant measures \cite{LY2}. The conditional measures
along the nested unstable manifolds associated with the positive
Lyapunov exponents are exact dimensional. Each increment of
partial entropy equals the corresponding
positive Lyapunov exponent multiplied by the increment of
conditional dimension. These dimension increments lie between
zero and the corresponding exponent multiplicities, and the
sum of their exponent-weighted contributions is the metric
entropy. The formula thus replaces the multiplicities in
Ruelle's upper bound with dimensions determined by the measure
itself, yielding an exact identity even for singular and
nonhyperbolic measures. Together, the two theorems explain
both how unstable geometry determines entropy and when the
measure realizes the full contribution of its positive
exponents.

  The classical proofs rely on Lipschitz holonomy estimates
to compare transverse scales and conditional measures.
Brown \cite{Brown} established the required regularity in
the relevant local models, extending the Ledrappier--Young
formulas to \(C^{1+\beta}\) diffeomorphisms. For dynamically
coherent \(C^{1+\beta}\) partially hyperbolic systems,
Saghin \cite{Saghin} proved differentiability of stable and
unstable holonomies between center leaves under the
corresponding \(\beta\)-bunching conditions.

  Extending the entropy--dimension formula to \(C^1\)
diffeomorphisms is a natural problem, but even its underlying
geometric objects need not exist. The examples of Pugh
\cite{Pugh} and Bonatti, Crovisier, and Shinohara \cite{BCS}
show that nonzero Lyapunov exponents alone do not guarantee
the invariant manifolds of Pesin theory. Dominated splitting
is therefore a natural hypothesis at this regularity: it
uniformly separates the relevant invariant directions and
allows one to recover nonuniform invariant manifolds, as
established by Abdenur, Bonatti, and Crovisier
\cite[Section~8]{ABC}.

  Several entropy results are available at \(C^1\) regularity.
Tahzibi \cite{Tahzibi} proved Pesin's formula for
\(C^1\)-generic volume-preserving surface diffeomorphisms.
Under domination, Sun and Tian \cite{SunTian} obtained entropy
lower bounds for absolutely continuous invariant measures,
while Catsigeras, Cerminara, and Enrich \cite{CCE} proved
related bounds for measures describing the asymptotic
statistics of Lebesgue-typical orbits. Wang, Wang, and Zhu
\cite{WWZ} established analogues of Ruelle's inequality and
Pesin's formula along unstable foliations.

  Recovering invariant manifolds does not, however, provide
the Lipschitz holonomy estimates used in the classical
Ledrappier--Young proof. The available \(C^1\) estimates are
H\"older in general, and the resulting distortion of transverse
scales prevents those comparisons from being applied directly.
The entropy bounds above do not resolve the exact relation
between partial entropy and conditional dimension for
arbitrary ergodic measures.

  Partial answers were obtained in our joint work \cite{GTY},
referred to as Part~1, and our subsequent paper \cite{Tong},
referred to as Part~2. Part~1 proves equality of metric and
unstable entropy under a dominated positive--zero--negative
splitting when the zero-exponent bundle has dimension at most
one. Part~2 establishes the transverse formula for a simple
intermediate positive exponent and the full formula when the
intermediate nonnegative exponents are simple. These arguments
work with H\"older holonomies, but use one-dimensional
uncentered differentiation.

The present paper removes the multiplicity restrictions and
the need to separate zero from negative exponents. At an
intermediate positive exponent, domination between its
Oseledets bundle, the sum of the faster bundles, and the
remaining bundles gives the transverse entropy formula.
Domination between the positive and nonpositive bundles gives
equality of unstable and metric entropy. Together, these
results establish the full Ledrappier--Young formula when the
positive Oseledets bundles and their nonpositive complement
form a dominated splitting. Thus we complete the Ledrappier--Young entropy formula theory in the classical $C^1$ plus domination settings.

Our main new ingredient is a mechanism for estimating partial
entropy through conditional-information comparisons.
The guiding principle, also present in the work of Tahzibi
and Yang \cite{TY}, is that coarser conditioning can provide
upper bounds for partial entropy.
After conditioning on fine transverse cells, we bound the
information of orbit segments by the accumulated information
of suitable coarser conditional probabilities.
Over \(n\) iterates, the comparison has a total loss of only
\(2\log n\) and holds almost surely for all sufficiently large
\(n\), by the Borel--Cantelli lemma.
Combined with transverse shrinking, this mechanism replaces
the maximal-function and differentiation estimates in the
Ledrappier--Young transverse entropy argument \cite{LY1,LY2}.
It works with H\"older holonomies in arbitrary finite transverse
dimension, remains applicable at higher regularity, and also
yields the unstable entropy formula.
The key estimate is proved in Section~\ref{sec:information}.

\subsection{Entropy formulas}\label{subsec:notation}
Let \(M\) be a compact smooth Riemannian manifold without boundary,
\(f\in\Diff^1(M)\), and \(m\) an ergodic \(f\)-invariant Borel
probability measure. Write \(h_m(f)\) for metric entropy and
\(H_m(\cdot\mid\cdot)\) for conditional entropy, with the standard
conventions of \cite{Rokhlin,KH}. All logarithms are natural.

  Let \(\lambda_1>\cdots>\lambda_u>0\) be the distinct positive
Lyapunov exponents, with corresponding Oseledets bundles
\(E^{u,j}\) \cite{Oseledets}. Put
\[
 d_j=\dim E^{u,j},\qquad
 E^{(j)}=\bigoplus_{k=1}^j E^{u,k},\qquad E^{(0)}=\{0\},
\]
and denote the sum of the remaining Oseledets bundles by
\(E^{\widehat{(j)}}\). We use \(E^u,E^c,E^s\) for the positive-,
zero-, and negative-exponent bundles, and set
\(E^{cs}=E^c\oplus E^s\), the sum of all nonpositive-exponent
bundles. Domination of \(E^u\oplus E^{cs}\) does not require
a dominated splitting inside \(E^{cs}\).

  A \(Df\)-invariant splitting \(T_\Lambda M=F\oplus G\) into
bundles of constant dimensions over an \(f\)-invariant set
\(\Lambda\) is dominated if there is an integer \(N\geqslant1\)
such that
\[
 |Df_x^N w|\leqslant\tfrac12|Df_x^N v|
\]
for every \(x\in\Lambda\) and all unit vectors
\(v\in F(x)\), \(w\in G(x)\).
More generally, \(T_\Lambda M=F_1\oplus\cdots\oplus F_r\)
is dominated if each splitting
\[
 (F_1\oplus\cdots\oplus F_\ell)
 \oplus(F_{\ell+1}\oplus\cdots\oplus F_r),
 \qquad 1\leqslant\ell<r,
\]
is dominated. Thus faster bundles are listed first.

  A dominated splitting is continuous and extends uniquely
to the closure of its base set \cite[Section~1.2]{GTY}.
For Oseledets bundles, domination on \(\supp m\) means that
the splitting on an invariant full-\(m\)-measure subset of
\(\supp m\) extends to a dominated splitting on \(\supp m\).
Zero-dimensional bundles are omitted.

  Whenever \(E^{(j)}\oplus E^{\widehat{(j)}}\) is dominated,
\cite[Proposition~8.9]{ABC}, applied to \(f^{-1}\), gives nested
\(C^1\) unstable manifolds \(W^j(x)\) tangent to \(E^{(j)}(x)\).
Write \(B^j(x,r)\) for their intrinsic balls and set
\(W^0(x)=\{x\}\). For an increasing generating measurable partition
\(\xi_j\) subordinate to \(W^j\), let \(m_x^j=m_x^{\xi_j}\) be
its conditional probabilities. The partial entropy and the lower
and upper conditional dimensions are
\begin{equation}\label{eq:partial-entropy}
 h_j=H_m(\xi_j\mid f\xi_j)=H_m(f^{-1}\xi_j\mid\xi_j)
\end{equation}
and
\begin{equation}\label{eq:partial-dimensions}
 \ldel_j=\liminf_{r\downarrow0}
       \frac{\log m_x^j(B^j(x,r))}{\log r},\qquad
 \udel_j=\limsup_{r\downarrow0}
       \frac{\log m_x^j(B^j(x,r))}{\log r}.
\end{equation}
These quantities are constant almost everywhere and independent
of the choice of subordinate partition
\cite[Sections~1.2 and~3]{Tong}. They satisfy
\[
 0\leqslant h_j\leqslant h_m(f)<\infty,\qquad
 0\leqslant\ldel_j\leqslant\udel_j\leqslant\dim E^{(j)}.
\]
Here finiteness of entropy also follows from Ruelle's inequality
\cite{Ruelle}. Set \(h_0=\ldel_0=\udel_0=0\). We call
\(h_j-h_{j-1}\) the \(j\)-th transverse entropy and write
\(\delta_j\) for the common conditional dimension when
\(\ldel_j=\udel_j\). Our first theorem identifies both dimension
increments without assuming exact dimensionality.

\begin{mainthm}\label{thm:main}
Suppose that \(2\leqslant i\leqslant u\) and
\begin{equation}\label{eq:main-splitting}
 E^{(i-1)}\oplus E^{u,i}\oplus E^{\widehat{(i)}}
\end{equation}
is dominated on \(\supp m\). Then
\begin{equation}\label{eq:main-formula}
 \ldel_i-\ldel_{i-1}
 =\udel_i-\udel_{i-1}
 =\frac{h_i-h_{i-1}}{\lambda_i}
 =:\gamma_i,\qquad 0\leqslant\gamma_i\leqslant d_i.
\end{equation}
\end{mainthm}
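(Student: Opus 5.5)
The proof is a pair of inequalities: a lower bound on the dimension increments by the transverse entropy, and an upper bound on the transverse entropy by the dimension increments. Throughout, $\ud$, $\ld$ denote the lower and upper increments of the transverse dimension, defined below.

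\emph{Reduction to a transverse dimension.} First, using the three-block domination \eqref{eq:main-splitting}, we build the objects living inside $W^i(x)$. The splitting $E^{(i-1)}\oplus E^{u,i}$ restricted to $W^i$ gives the strong unstable foliation by leaves $W^{i-1}$. A transversal $\Sigma$ to $W^{i-1}$ inside $W^i$ can be taken tangent to the cone around $E^{u,i}$. We project $m_x^i$ along $W^{i-1}$-holonomy onto a local transversal, giving a quotient measure $\tm_x$. By \cite[Section~8]{ABC} the holonomies are only H\"older, but domination makes the exponent close to $1$ at small scales: distances along $W^{i-1}$ contract under $f^{-n}$ at rate at least $\lambda_{i-1}-\eps$, while transverse distances contract at rate about $\lambda_i$. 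Dynamically defined ``Bowen-type'' transverse boxes
\[
f^{n}\bigl(B^{i-1}(f^{-n}x,\rho)\times \text{transverse } e^{-n\lambda_i}\text{-ball}\bigr)
\]
then have controlled shape. Write $\ld(\tm)$ and $\ud(\tm)$ for the lower and upper local dimensions of $\tm_x$ at $x$, measured with these boxes. The standard box-splitting argument (Ledrappier--Young, as in Part~2 \cite{Tong}) gives
\[
\ud_i\le \ud_{i-1}+\ud(\tm),\qquad \ld_i\ge \ld_{i-1}+\ld(\tm).
\]
The reverse inequalities follow once $\ld(\tm)=\ud(\tm)=\gamma_i$ is known. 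That yields both equalities in \eqref{eq:main-formula}, with $0\le\gamma_i\le d_i$ because the transversal has dimension $d_i$.

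\emph{Lower bound $\ld(\tm)\ge (h_i-h_{i-1})/\lambda_i$.} Take the partitions $\xi_i\ge\xi_{i-1}$ and compute the transverse information $I(\xi_{i-1}\mid\xi_i\vee f^{-1}\ldots)$ along the orbit using the identity
\[
h_i-h_{i-1}=\lim_{n\to\infty}\frac1n\Bigl[I_{f^{-n}\xi_i\mid\xi_i}-I_{f^{-n}\xi_{i-1}\mid\xi_{i-1}}\Bigr].
\]
Then combine it with a Shannon--McMillan--Breiman type argument. Cells of $f^{-n}\xi_i$ saturated by $\xi_{i-1}$ sit inside transverse balls of radius about $e^{-n(\lambda_i-\eps)}$. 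This gives $\tm_x(\text{box}_n)\le e^{-n(h_i-h_{i-1}-\eps)}$ on sets of large measure, which is the lower bound. Applying Borel--Cantelli upgrades it to an a.e.\ liminf statement.

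\emph{Upper bound $\ud(\tm)\le (h_i-h_{i-1})/\lambda_i$ (main obstacle).} Classically this step uses Lipschitz holonomy and a maximal-function or differentiation lemma, both unavailable here. Instead we use the conditional-information comparison of Section~\ref{sec:information}. Condition on a fine transverse cell and consider the coarser conditional probabilities at each iterate. The information of the $n$-step orbit segment is bounded by the sum of these per-step coarse informations, with total loss at most $2\log n$, almost surely for large $n$. Each step's coarse information is controlled, in the ergodic average, by $h_i-h_{i-1}+\eps$. The transverse shrinking rate $\lambda_i$ converts this to $\tm_x(\text{box}_n)\ge e^{-n(h_i-h_{i-1}+2\eps)}$ eventually a.s., and hence the upper bound on $\ud(\tm)$. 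The delicate point is checking that the H\"older distortion of boxes, once iterated, costs only $e^{n\eps}$ in scale. I would try to get this from the domination gap, applying the comparison to the iterate $f^N$ with $N$ large.
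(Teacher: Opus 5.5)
There is a genuine gap: the two directions are interchanged, and the hard one is never proved. In your ``lower bound'' paragraph you claim $\tm_x(\mathrm{box}_n)\le e^{-n(h_i-h_{i-1}-\eps)}$, that is, $h_i-h_{i-1}\le\lambda_i\cdot(\text{transverse dimension})$. The geometric input you cite is that cells of $f^{-n}\xi_i$ saturated by $\xi_{i-1}$ lie \emph{inside} transverse balls of radius about $e^{-n(\lambda_i-\eps)}$. That only bounds the mass of a ball from \emph{below}, so it gives an upper bound on the transverse dimension. This is the easy direction, and the paper obtains it by cylinder counting (\cref{lem:counting}), whose only geometric input is the backward contraction \eqref{eq:strong-backward} along $W^i$. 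The inequality you actually claim needs the reverse containment: a transverse ball or grid cell of radius $e^{-n(\lambda_i+\eps)}$ lying inside a single dynamically defined cell. This fails whenever the orbit passes near partition boundaries. That is exactly where Ledrappier--Young need Lipschitz holonomies and a maximal-function lemma, and an SMB plus Borel--Cantelli argument does not repair it.

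Your ``main obstacle'' paragraph then aims the conditional-information comparison at the wrong inequality. In \cref{thm:information} the comparison bounds $-\log m_x^{\A_n}((f^{-n}\eta_i)(x))$ from \emph{above} by $\sum_{k<n}-\log q_n^k(f^kx)+2\log n$; see \eqref{eq:likelihood-conclusion}. The predictors $q_n^k$ are conditional probabilities of $f^{-1}\eta_i$ given coarse cells $\C_\ell=\eta_i\vee(\tp^i)^{-1}\Q_\ell$. Their average information decreases to $H_m(f^{-1}\eta_i\mid\eta_{i-1})=h_{i-1}$, not to $h_i-h_{i-1}$, once transverse shrinking (\cref{prop:transport-geometry}) makes boundary visits negligible.

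Insert this into the chain-rule identity \eqref{eq:telescope}, whose right side is at most $\exp(-\sum_{k<n}I_0(f^kx))\approx e^{-nh_i}$. The result is the \emph{upper} bound $\tm_x^i(\A_n(x))\le e^{-n(h_i-h_{i-1})+o(n)}$, which is the lower bound on transverse dimension. It cannot give $\tm_x(\mathrm{box}_n)\ge e^{-n(h_i-h_{i-1}+2\eps)}$. That would need a lower bound on the conditional information, and also a lower bound on the factor $\tm^i_{f^nx}(\A_n^n(f^nx))$, which is only known to be at most one.

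With the roles assigned correctly, counting gives $h_i-h_{i-1}\ge\lambda_i\max\{\ldel_i-\ldel_{i-1},\udel_i-\udel_{i-1}\}$, and the mechanism gives the lower transverse dimension bound. You then no longer need the upper transverse dimension or the unproved ``reverse inequalities'' of your reduction. The product and mixture inequalities in \cref{prop:restriction-transfer} transfer the single lower transverse bound to both $\ldel_i-\ldel_{i-1}$ and $\udel_i-\udel_{i-1}$.

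The H\"older distortion also does not need your iterate $f^N$. The Hölder coordinate maps are applied only at the last return time, with growth measured along fake $i$-leaves. The cost is a factor $\alpha$ in the grid rate, which disappears as $\eps\downarrow0$.
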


  In particular, exact dimensionality along \(W^{i-1}\) implies exact
dimensionality along \(W^i\). The faster bundle \(E^{(i-1)}\) need not
split into dominated single-exponent bundles.

  When \(E^u\oplus E^{cs}\) is dominated, let \(h_m^u(f)\) be
the entropy along the full unstable manifolds, as in
\cite[Section~3.1]{GTY}. Thus \(h_m^u(f)=h_u\) when \(u>0\),
and we set \(h_m^u(f)=0\) otherwise. The following theorem
identifies this entropy with \(h_m(f)\).

\begin{mainthm}\label{thm:unstable}
If \(E^u\oplus E^{cs}\) is dominated on \(\supp m\), then
\begin{equation}\label{eq:unstable-formula}
 h_m(f)=h_m^u(f).
\end{equation}
\end{mainthm}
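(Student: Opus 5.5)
Since \(h_m^u(f)\leqslant h_m(f)\) always holds (it is the entropy of \(f\) relative to the increasing partition \(\xi_u\), and \(H_m(f^{-1}\xi_u\mid\xi_u)\leqslant h_m(f,\cdot)\) for partitions refining toward a generator), the plan is to prove the reverse inequality \(h_m(f)\leqslant h_m^u(f)\). The case \(u=0\) is the \(C^1\) Ruelle inequality under the stated convention, so I would assume \(u\geqslant1\) and work with the increasing generating partition \(\xi=\xi_u\) subordinate to the full unstable manifolds \(W^u=W^{u}\) supplied by \cite[Proposition~8.9]{ABC}. I would fix a finite partition \(\mathcal P\) with small diameter and \(m(\partial\mathcal P_\rho)\lesssim\rho\), so that \(h_m(f)=h_m(f,\mathcal P)\). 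The goal is then to show that
\[
h_m(f,\mathcal P)\leqslant \lim_n \tfrac1n H_m\bigl(\mathcal P_0^{n-1}\mid \xi\bigr)+\text{(error)}
\]
and that \(\tfrac1n H_m(\mathcal P_0^{n-1}\mid\xi)\) tends to \(h_u\). This is the standard mechanism: the information of \(\mathcal P_0^{n-1}\) splits into an unstable part and a "transverse" part along \(E^{cs}\).

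The key step is to bound the transverse contribution. For this I would use a fine transverse partition \(\eta\) into local \(cs\)-plaques. Domination of \(E^u\oplus E^{cs}\) gives local center-stable fake plaques, or a plaque family in the sense of Hirsch--Pugh--Shub, whose forward iterates grow at most subexponentially relative to any positive rate. Because all exponents in \(E^{cs}\) are nonpositive, one expects that an \(n\)-Bowen ball intersected with a \(cs\)-plaque contains a plaque of size \(e^{-\eps n}\). Hence the number of \(\mathcal P_0^{n-1}\)-atoms needed to cover a \(cs\)-plaque through a typical point grows subexponentially. Concretely, I would condition on \(\xi\vee\eta\) and apply the information-comparison mechanism of Section~\ref{sec:information}. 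That mechanism bounds \(I_{\mathcal P_0^{n-1}\mid\xi}(x)\) by accumulated coarser conditional informations along the orbit plus a \(2\log n\) loss, which holds almost surely for large \(n\) by Borel--Cantelli. Combined with the subexponential cs-growth, this would give \(\tfrac1n I_{\mathcal P_0^{n-1}}(x)\leqslant \tfrac1n I_{\mathcal P_0^{n-1}\mid\xi}(x)+O(\eps)\) on a set of large measure.

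Finally, I would identify \(\tfrac1n H(\mathcal P_0^{n-1}\mid\xi)\) with \(h_u\). Because \(\xi\) is increasing and subordinate, \(\mathcal P_0^{n-1}\vee\xi\) and \(f^{-n}\xi\) agree up to atoms of boundary-controlled size, and \(H(f^{-n}\xi\mid\xi)=nh_u\). I would compare the two conditional entropies using the bounded-overlap estimate \(m(\partial \mathcal P_\rho)\lesssim\rho\) together with the uniform exponential contraction under \(f^{-1}\) along \(W^u\); this makes the discrepancy \(o(n)\). Letting \(\eps\to0\) then yields \(h_m(f)\leqslant h_u\).

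I expect the main obstacle to be the transverse step. The \(cs\)-direction may carry zero exponents with arbitrary multiplicity and no uniform contraction, and the holonomies are only H\"older, so the maximal-function and Lipschitz arguments are unavailable. There I would rely entirely on the conditional-information comparison, choosing the transverse cells adapted to scale \(e^{-\eps n}\) and using the dominated cone field to control how \(\mathcal P\)-atoms cut \(cs\)-plaques. Proving that this cut count is subexponential, uniformly enough to survive Borel--Cantelli, is where I anticipate the real work.
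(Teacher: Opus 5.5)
There is a genuine gap. Your reduction is the right one: everything hinges on showing that the \(E^{cs}\)-transverse part of \(h_m(f,\mathcal P)\) is \(O(\eps)\). But the proposal stops exactly at that step, and the formulation you sketch for it would not close. A subexponential count of \(\mathcal P_0^{n-1}\)-atoms meeting a \(cs\)-plaque is not an entropy bound. Without a local product structure for \(m\), which is not available here, such a count does not control \(I_m(\mathcal P_0^{n-1})\). Moreover, a \(cs\)-plaque of fixed size need not stay in the charts, since the forward length bound along fake \(cs\)-leaves only gives growth \(A(x)e^{3\eps n}\). If \(\eta\) is a partition into local \(cs\)-plaques, then \(\xi\vee\eta\) is essentially the partition into points, so conditioning on it is vacuous. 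The difficulty you anticipate, controlling how \(\mathcal P\)-atoms cut \(cs\)-plaques uniformly enough for Borel--Cantelli, is not the one that has to be overcome.

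The missing structure is as follows. Take \(\eta_2=\mathcal P^+\) and \(\eta_1=\xi^u\vee\mathcal P^+\), so that \(H_m(f^{-1}\eta_2\mid\eta_2)=h_m(f,\mathcal P)\), \(H_m(f^{-1}\eta_1\mid\eta_1)=h_m^u(f)\), and \(f^{-1}\eta_1=\eta_1\vee f^{-1}\eta_2\). This replaces your identification of \(\frac1n H_m(\mathcal P_0^{n-1}\mid\xi)\) with \(h_u\), which as sketched uses a uniform backward contraction along \(W^u\); here that contraction is only tempered. Next, define a \(D\)-dimensional coordinate \(\phi\) by unstable holonomy to a fixed transversal at the last past return to a set \(E\). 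Its level sets in each \(\eta_2\)-atom are exactly the \(\eta_1\)-subatoms, so \(\sigma(\eta_1)=\sigma(\eta_2)\vee\phi^{-1}\B(\R^D)\). Slice by a grid of side \(e^{-an}\), with \(a=4\eps/\alpha_0\), to get cells \(\mathcal U_n\).

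The geometric input is then transverse shrinking, not a cut count. The atoms at \(f^kx\) of \(f^k\mathcal U_n\vee\eta_2\), for \(0\leqslant k\leqslant n\), have \(\phi\)-diameter at most \(C_xe^{-\alpha_0\eps n}\). This follows from the forward \(cs\)-growth and the \(\alpha_0\)-H\"older holonomy, and it is essential that \(\alpha_0\) not depend on \(\eps\). Since these cells are intersected with \(\eta_2\) at each time, their points stay in the charts automatically. Borel--Cantelli enters only through the purely measure-theoretic comparison-weight lemma.

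You also have the roles in the comparison mechanism reversed. The information of \(f^{-n}\eta_1\) given \(\eta_1\) is just a Birkhoff sum and needs no mechanism. The mechanism is needed for the information of \(f^{-n}\eta_2\) given the cell \(\mathcal U_n\). It bounds this by the accumulated information of the conditionals of \(\eta_2\vee\phi^{-1}\Q_\ell\) (away from the grid boundary) plus \(2\log n\). With the two-way chain rule \eqref{eq:telescope} this gives \(\liminf_{n\to\infty}-\frac1n\log m_x^{\eta_2}(\mathcal U_n(x))\geqslant h_m(f,\mathcal P)-h_m^u(f)\). Since there are at most \(Ce^{anD}\) cells, \(h_m(f,\mathcal P)-h_m^u(f)\leqslant 4D\eps/\alpha_0\).

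Two smaller points. First, \(h_m(f)=h_m(f,\mathcal P)\) is unjustified for a \(C^1\) map, because a small-diameter partition need not be generating; instead choose \(\mathcal P\) with \(h_m(f,\mathcal P)\geqslant h_m(f)-\eps\). Second, the case \(E^{cs}=0\), which you omit, is handled by Ruelle's inequality for \(f^{-1}\).
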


  Applying the transverse formula to the successive positive
Oseledets bundles and using this equality gives the full entropy
formula.

\begin{mainthm}\label[mainthm]{cor:full}
Suppose that
\begin{equation}\label{eq:full-splitting}
 E^{u,1}\oplus\cdots\oplus E^{u,u}\oplus E^{cs}
\end{equation}
is dominated on \(\supp m\). For every \(1\leqslant j\leqslant u\), the conditional
measures along \(W^j\) are exact dimensional. Writing their dimension
as \(\delta_j\), and setting \(\delta_0=0\) and
\(\gamma_j=\delta_j-\delta_{j-1}\), one has
\begin{equation}\label{eq:full-formula}
 0\leqslant\gamma_j\leqslant d_j,\qquad
 h_j=\sum_{k=1}^j\lambda_k\gamma_k,\qquad
 h_m(f)=\sum_{k=1}^u\lambda_k\gamma_k.
\end{equation}
\end{mainthm}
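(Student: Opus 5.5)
The plan is to derive \Cref{cor:full} from \Cref{thm:main} and \Cref{thm:unstable} by induction on \(j\), once we check that the single splitting \eqref{eq:full-splitting} supplies all the dominated splittings those theorems need.

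First, the domination hypotheses. By definition, domination of \eqref{eq:full-splitting} means that every two-block coarsening of it is dominated. In particular, for each \(1\leqslant j\leqslant u\) the splitting \(E^{(j)}\oplus E^{\widehat{(j)}}\) is dominated, since \(E^{\widehat{(j)}}=E^{u,j+1}\oplus\cdots\oplus E^{u,u}\oplus E^{cs}\). So the nested unstable manifolds \(W^j\) exist, and \(h_j,\ldel_j,\udel_j\) are defined for every \(j\). For \(2\leqslant i\leqslant u\), the three-block splitting \eqref{eq:main-splitting} is also dominated: its two coarsenings are \(E^{(i-1)}\oplus(E^{u,i}\oplus E^{\widehat{(i)}})\) and \((E^{(i-1)}\oplus E^{u,i})\oplus E^{\widehat{(i)}}\), and both are coarsenings of \eqref{eq:full-splitting}. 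Finally, \(E^u\oplus E^{cs}=E^{(u)}\oplus E^{\widehat{(u)}}\) is dominated. If \(u=0\), the statement reduces to \(h_m(f)=0\), which is \Cref{thm:unstable} with \(h^u_m(f)=0\).

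Next, the base case \(j=1\). Here the inner flag is trivial, and I need \(\ldel_1=\udel_1=h_1/\lambda_1\), with this value in \([0,d_1]\). My first option is to check whether the proof of \Cref{thm:main} covers \(i=1\) with \(E^{(0)}=\{0\}\); then the splitting is just \(E^{u,1}\oplus E^{\widehat{(1)}}\), and \(h_0=\ldel_0=\udel_0=0\). If that is not directly available, the fallback is the standard argument for the fastest exponent. Because \(E^{u,1}\) carries a single exponent, iterates of \(B^1(x,r)\) under \(f^{-n}\) shrink at rate \(e^{-n\lambda_1}\) up to \(e^{\pm n\eps}\), uniformly along the orbit, using domination and \(C^1\) continuity on Pesin sets. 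Comparing \(m^1_x(B^1(x,e^{-n\lambda_1}))\) with the atoms of \(\bigvee_{k=0}^{n-1}f^{-k}\xi_1\) then gives the two dimension bounds, with the Shannon--McMillan--Breiman theorem for the increasing partition \(\xi_1\) supplying the rate \(h_1\). I expect this base case to be the main obstacle if it is not already contained in the paper: the upper bound \(\udel_1\leqslant h_1/\lambda_1\) requires comparing dynamical balls with Riemannian balls inside \(W^1\) without Lipschitz control. The bound \(0\leqslant h_1/\lambda_1\leqslant d_1\) follows from the Ruelle-type inequality along \(W^1\), or from \(\udel_1\leqslant\dim E^{(1)}=d_1\) once the equality is known.

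Inductive step. Suppose \(\ldel_{i-1}=\udel_{i-1}=\delta_{i-1}\). \Cref{thm:main} gives \(\ldel_i=\ldel_{i-1}+\gamma_i\) and \(\udel_i=\udel_{i-1}+\gamma_i\), so \(\ldel_i=\udel_i=:\delta_i\), and \(\gamma_i=\delta_i-\delta_{i-1}=(h_i-h_{i-1})/\lambda_i\) lies in \([0,d_i]\). Telescoping, and using \(h_0=0\),
\[
h_j=\sum_{k=1}^j(h_k-h_{k-1})=\sum_{k=1}^j\lambda_k\gamma_k .
\]
Taking \(j=u\) and applying \Cref{thm:unstable} gives \(h_m(f)=h_m^u(f)=h_u=\sum_{k=1}^u\lambda_k\gamma_k\).
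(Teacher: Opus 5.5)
Your proposal is correct and follows the paper's proof. The paper handles $u=0$ by Ruelle's inequality. It takes the base case $\ldel_1=\udel_1=h_1/\lambda_1\in[0,d_1]$ from the first-level argument of \cite[Section~5.3]{Tong}, following \cite[Section~10.1]{LY2}; this is your fallback rather than your first option, since \cref{thm:main} is only stated for $i\geqslant2$. It then inducts with \cref{thm:main} on coarsenings of \eqref{eq:full-splitting} and concludes with \cref{thm:unstable}. At the first level no holonomy enters, so the Lipschitz-control obstacle you anticipate does not arise; only the single-exponent forward and backward growth bounds along $W^1$ are used.
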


\subsection{Measure dimension}\label{subsec:measure-dimension}

  The results below combine \cref{thm:main,thm:unstable,cor:full}
with the dimension arguments of Part~2 \cite{Tong}. We indicate
the relevant arguments and omit the parallel proofs.

  Ledrappier and Xie \cite{LX} related vanishing transverse entropy
to concentration on a single strong unstable leaf. Their
entropy-equality argument applies in the dominated \(C^1\) setting
as explained in \cite[Section~1.3]{Tong}. Together with
\cref{thm:main} and the first-level formula in
Section~\ref{subsec:full-proof}, this gives the following
characterization by conditional dimensions.
We retain the convention \(W^0(x)=\{x\}\).

\begin{mainthm}[Vanishing transverse dimension]\label{thm:zero-transverse}
Let \(0\leqslant i<j\leqslant u\), and suppose that
\begin{equation}\label{eq:interval-splitting}
 E^{(i)}\oplus E^{u,i+1}\oplus\cdots\oplus E^{u,j}
 \oplus E^{\widehat{(j)}}
\end{equation}
is dominated on \(\supp m\). The following are equivalent:
\begin{enumerate}[label=\textup{(\arabic*)},leftmargin=2em]
\item \(\udel_i=\udel_j\).
\item \(\ldel_i=\ldel_j\).
\item For every measurable partition \(\mathcal A\) subordinate to
\(W^j\), one has
\[
 m_x^{\mathcal A}(W^i(x))=1
 \quad\text{for \(m\)-almost every \(x\)}.
\]
\end{enumerate}
\end{mainthm}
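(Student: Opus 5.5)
The plan is to reduce all three conditions to the vanishing of the partial-entropy increment \(h_j-h_i\). The transverse formula of \cref{thm:main} converts dimension increments into entropy increments, and the Ledrappier--Xie entropy-equality argument converts entropy equality into concentration on leaves.

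\emph{Step 1 (dimension increments).} The splitting \eqref{eq:interval-splitting} is dominated. Hence for each \(k\) with \(i<k\leqslant j\), coarsening it gives a dominated three-block splitting
\[
 E^{(k-1)}\oplus E^{u,k}\oplus E^{\widehat{(k)}}.
\]
For \(k\geqslant2\), \cref{thm:main} then gives
\[
 \ldel_k-\ldel_{k-1}=\udel_k-\udel_{k-1}=\frac{h_k-h_{k-1}}{\lambda_k}\geqslant0.
\]
For \(k=1\), which can only occur when \(i=0\), I would use the first-level formula of Section~\ref{subsec:full-proof} instead. It should give \(\ldel_1=\udel_1=h_1/\lambda_1\) using only domination of \(E^{(1)}\oplus E^{\widehat{(1)}}\). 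Summing over \(k\) telescopes to
\[
 \udel_j-\udel_i=\ldel_j-\ldel_i=\sum_{k=i+1}^j\frac{h_k-h_{k-1}}{\lambda_k}.
\]
Each summand is nonnegative, since \(h_k\geqslant h_{k-1}\) for nested subordinate partitions (equivalently, since the dimension increments are \(\geqslant0\)). Therefore (1), (2), and \(h_i=h_j\) are all equivalent.

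\emph{Step 2 (from entropy equality to (3)).} This is the Ledrappier--Xie argument, which \cite[Section~1.3]{Tong} explains carries over to the dominated \(C^1\) setting; I would invoke it rather than reprove it. The outline is as follows. Choose increasing generating partitions \(\xi_j\) subordinate to \(W^j\) and \(\xi_i\) subordinate to \(W^i\), with \(\xi_i\) refining \(\xi_j\), for instance \(\xi_i=\xi_j\vee\eta\) with \(\eta\) subordinate to \(W^i\). Then
\[
 h_j-h_i=H(f^{-1}\xi_j\mid\xi_j)-H(f^{-1}\xi_i\mid\xi_i).
\]
When this vanishes, the conditional entropy identities give \(H(\xi_i\mid f^{-n}\xi_j\vee\ldots)\) stationarity. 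Iterating forward and using that \(f^{-n}\xi_j\) generates (it refines to points), one concludes that \(m_x^{\xi_j}\)-almost every point lies in \(\xi_i(x)\) up to a null set. In other words, \(m_x^{\xi_j}\) is carried by a single \(W^i\)-plaque, and hence \(m_x^{\xi_j}(W^i(x))=1\).

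\emph{Step 3 (arbitrary partitions and the converse).} To pass to an arbitrary subordinate \(\mathcal A\), I would use that conditional measures of two partitions subordinate to \(W^j\) agree up to normalization on common intersections. Since \(W^j\)-leaves are exhausted by countably many \(f^{-n}\)-images of \(\xi_j\)-atoms, concentration on \(W^i(x)\) transfers. For the converse (3)\(\Rightarrow h_i=h_j\), take \(\mathcal A=\xi_j\). Then \(m_x^{\xi_j}\) lives on \(W^i(x)\cap\xi_j(x)\), so \(\xi_j\) and \(\xi_j\vee\eta\) coincide mod \(m\) with an \(i\)-subordinate partition. The partial entropies therefore agree, since \(h_i\) is independent of the choice of subordinate partition.

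The main obstacle is Step 2, and specifically the requirement that the argument use only Hölder holonomies and nonuniform local manifolds. I would rely on the statement in \cite{Tong} that the Ledrappier--Xie proof is purely measure-theoretic once nested subordinate partitions exist. The one genuinely new point is checking that the \(k=1\) first-level formula is available under the weaker domination when \(i=0\).
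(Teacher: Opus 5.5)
Your proposal is correct and follows the paper's route. \cref{thm:main} on each coarsened three-block splitting, together with the first-level formula when \(i=0\), telescopes to give (1)\(\Leftrightarrow\)(2)\(\Leftrightarrow h_i=h_j\), and the Ledrappier--Xie entropy-equality argument, carried to the dominated \(C^1\) setting as in \cite[Section~1.3]{Tong}, links \(h_i=h_j\) with (3). One caution: the cited step gives \(m_x^{\xi_j}(W^i(x))=1\), not concentration on the atom \(\xi_i(x)\) as your sketch suggests (that fails, e.g., for the admissible refinement \(\xi_i=f^{-1}\xi_j\) once (3) holds and \(h_j>0\)), but your argument only uses the leaf statement.
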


  Let \(s\) be the number of distinct negative Lyapunov exponents, and
let \(E^{s,1},\ldots,E^{s,s}\) be their Oseledets bundles, ordered
by decreasing exponent. The full Oseledets splitting is
\begin{equation}\label{eq:full-oseledets}
 E^{u,1}\oplus\cdots\oplus E^{u,u}\oplus E^c
 \oplus E^{s,1}\oplus\cdots\oplus E^{s,s}.
\end{equation}
When \eqref{eq:full-oseledets} is dominated, \cref{cor:full}
applied to \(f\) and \(f^{-1}\) gives exact unstable and stable
conditional dimensions, denoted by \(d^u,d^s\). Thus, for
subordinate conditional probabilities \(m_x^u,m_x^s\),
\begin{equation}\label{eq:stable-unstable-dimensions}
 \lim_{\rho\downarrow0}
 \frac{\log m_x^*(B^*(x,\rho))}{\log\rho}=d^*,
 \qquad *\in\{u,s\},
 \quad\text{for \(m\)-almost every \(x\)}.
\end{equation}
Here \(B^*\) is an intrinsic leaf ball.

  Write \(B(x,\rho)\) for a Riemannian ball in \(M\). The lower
and upper pointwise dimensions of \(m\) are
\begin{equation}\label{eq:pointwise-dimensions}
 \ld(x)=\liminf_{\rho\downarrow0}
 \frac{\log m(B(x,\rho))}{\log\rho},
 \qquad
 \ud(x)=\limsup_{\rho\downarrow0}
 \frac{\log m(B(x,\rho))}{\log\rho}.
\end{equation}
Both functions are constant almost everywhere by invariance and
ergodicity. For the upper pointwise dimension, Tong
\cite[Theorem~1.6]{Tong} established the upper bound of
Ledrappier--Young \cite[Section~12]{LY2} in the setting of Part~2.
By the same proof, we can obtain the following theorem with our entropy formulas.

\begin{mainthm}[Upper pointwise dimension]\label{thm:dimension-upper}
If the full Oseledets splitting \eqref{eq:full-oseledets} is dominated
on \(\supp m\), then
\begin{equation}\label{eq:dimension-upper}
 \ud(x)\leqslant d^u+d^s+\dim E^c
 \quad\text{for \(m\)-almost every \(x\)}.
\end{equation}
\end{mainthm}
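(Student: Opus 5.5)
We follow the covering argument of Ledrappier--Young \cite[Section~12]{LY2} as adapted in Part~2 \cite[Theorem~1.6]{Tong}, using \cref{cor:full} for \(f\) and \(f^{-1}\) as the only new input. By \eqref{eq:stable-unstable-dimensions}, for \(\eps>0\) and small \(\rho\), a set \(\Gamma\) of measure close to one has \(m_x^u(B^u(x,\rho))\geqslant\rho^{d^u+\eps}\) and \(m_x^s(B^s(x,\rho))\geqslant\rho^{d^s+\eps}\) uniformly in \(x\in\Gamma\). The goal is the lower mass bound
\[
 m\bigl(B(x,C\rho)\bigr)\geqslant\rho^{\,d^u+d^s+\dim E^c+3\eps}
\]
at density points of \(\Gamma\), for a sequence \(\rho=\rho_n\to0\) with \(\rho_{n+1}\geqslant\rho_n^{1+\eps}\). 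Monotonicity of \(\rho\mapsto m(B(x,\rho))\) then gives \eqref{eq:dimension-upper} along all \(\rho\).

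We work in a Pesin-type block \(\Lambda\) where the dominated splitting \eqref{eq:full-oseledets} provides local unstable and stable manifolds \(W^u_{loc},W^s_{loc}\) tangent to \(E^u,E^s\) (from \cite[Proposition~8.9]{ABC}), of uniform size, with uniformly transverse tangent spaces and continuous dependence on the point. The ball \(B(x,C\rho)\) contains a ``product box'' consisting of points \(z\in B(x,\rho)\) and local stable leaves \(W^s_{loc}(y)\) through \(y\in W^u_{loc}(z)\), thickened in the \(E^c\) direction. Following \cite{Tong}, we cover \(B(x,\rho)\cap\Lambda\) by roughly \(\rho^{-\dim E^c}\) center-slices \(S\) of size \(\rho\) transverse to \(E^u\oplus E^s\). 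By pigeonholing, one slice carries mass at least \(\rho^{\dim E^c+\eps}\,m(B(x,\rho)\cap\Gamma)\). Inside this slice we use the conditional measures of the unstable partition, and then disintegrate those along stable leaves. The Ledrappier--Young trick is to replace the unavailable product structure by two facts. First, typical unstable conditionals give mass \(\rho^{d^u+\eps}\) to unstable \(\rho\)-balls. Second, the set of points whose stable \(\rho\)-ball carries stable-conditional mass \(\rho^{d^s+\eps}\) has full measure. Combining them via a counting argument over \(\rho\)-separated unstable plaques gives the product of the two bounds.

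The main obstacle is this combination step. Classically it uses Lipschitz holonomy to transport stable dimension estimates across unstable plaques. In \(C^1\) we only have Hölder, and in fact only continuous, holonomies. Following \cite[Section~1.3]{Tong}, we would avoid holonomies altogether. The plan is to count disjoint unstable plaques meeting \(\Gamma\cap B(x,\rho)\), each of conditional mass at least \(\rho^{d^u+\eps}\), via a Vitali-type selection, and to bound their number from below by the stable-direction dimension. The count comes from the local entropy characterisation: the number of such plaques is comparable to \(m^s_x(B^s(x,\rho))^{-1}\) measured by Bowen balls of \(f^{-1}\), with exponent \(h_m(f^{-1})\) weighted through \cref{cor:full} for \(f^{-1}\). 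This uses only the entropy formulas and domination for transversality constants, so it goes through verbatim as in Part~2 once \cref{thm:main,thm:unstable,cor:full} replace the simple-spectrum formulas used there.
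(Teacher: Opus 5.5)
Your top-level reduction is the paper's. The theorem is obtained there only by rerunning the argument of \cite[Theorem~1.6]{Tong} (after \cite[Section~12]{LY2}). The new inputs are the exact conditional dimensions \(d^u,d^s\) from \cref{cor:full} for \(f\) and \(f^{-1}\), together with the entropy equalities of \cref{thm:unstable}; these replace the multiplicity-restricted inputs of Parts~1 and~2. Read as that citation, your first and last sentences are the whole proof, and they are correct.

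Your reconstruction of the covering argument, however, does not work as written, so it should not be presented as the proof.
\begin{enumerate}[label=\textup{(\arabic*)},leftmargin=2em]
\item Pigeonholing over slices gives \emph{some} heavy slice, not the one through a typical \(x\). The scales are also inconsistent: slices of size \(\rho\) cover \(B(x,\rho)\) with \(O(1)\) pieces, not \(\rho^{-\dim E^c}\). Finally, a bound in terms of \(m(B(x,\rho)\cap\Gamma)\) leaves you needing a lower bound for essentially the quantity being estimated. What does work is a Chebyshev count at a fixed scale. If \(N\) cells of diameter at most \(\rho\) cover a fixed set \(G\), then \(m\{y\in G: m(Q(y))<t/N\}\leqslant t\); this is \cref{lem:comparison-weights} with uniform weights. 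Borel--Cantelli along \(\rho_n\) then finishes.
\item Disintegrating unstable conditionals along stable leaves is vacuous, since a local unstable plaque meets a local stable plaque in at most one point.
\item Most importantly, unstable plaques are \(m\)-null. Lower bounds on their conditional masses, together with a \emph{lower} bound on how many of them meet \(B(x,\rho)\), give no lower bound on \(m(B(x,\rho))\). What is needed is the opposite count: a lower bound of order \(\rho^{d^s+\dim E^c+\eps}\) for the quotient measure of the unstable plaques through \(B(x,\rho)\). Equivalently, most of the mass must lie in at most about \(\rho^{-(d^s+\dim E^c+\eps)}\) transverse \(\rho\)-cells.
\item Identifying that count with \(m_x^s(B^s(x,\rho))^{-1}\) is exactly the missing product structure. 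Moreover, Bowen balls of \(f^{-1}\) are not round when the stable spectrum has several exponents, so \(h_m(f^{-1})\) does not convert into \(d^s\) by a one-scale count.
\end{enumerate}
Also, the holonomies available here are H\"older, as in \cref{lem:holonomy-input}, not merely continuous.

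So your proposal stands only through the deferral to Part~2, which is the same deferral the paper makes. The mechanism you sketched would have to be replaced rather than patched.
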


  The measure \(m\) is exact dimensional if
\(\ld(x)=\ud(x)\) almost everywhere. We denote the common value by
\(\dim m\). It is hyperbolic if it has no zero Lyapunov exponents.
The existence of pointwise dimension for hyperbolic measures was
conjectured by Eckmann and Ruelle \cite{ER}. Barreira, Pesin, and
Schmeling \cite{BPS} proved it for \(C^{1+\beta}\) diffeomorphisms
through an asymptotically local product structure. For dimension results
for random diffeomorphisms and endomorphisms, see Liu and Xie
\cite{LiuXie} and Shu \cite{Shu10}.
At \(C^1\) regularity, Wang and Cao \cite{WC} obtained
Hausdorff-dimension estimates under domination of the Oseledets
splitting. With the stable and unstable conditional dimensions
supplied by \cref{cor:full} for \(f\) and \(f^{-1}\), the following
is a direct application of \cite[Theorem~7.1]{Tong}.

\begin{mainthm}[Hyperbolic exact dimensionality]\label{thm:exact-dimensional}
If \(m\) is hyperbolic and its full Oseledets splitting is dominated
on \(\supp m\), then \(m\) is exact dimensional and
\begin{equation}\label{eq:exact-dimension}
 \ld(x)=\ud(x)=\dim m=d^u+d^s
 \quad\text{for \(m\)-almost every \(x\)}.
\end{equation}
\end{mainthm}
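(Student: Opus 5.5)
The plan is to prove \cref{thm:exact-dimensional} as a combination of an upper bound and a lower bound. Since \(m\) is hyperbolic, \(E^c=\{0\}\). The full Oseledets splitting \eqref{eq:full-oseledets} then reduces to \(E^{u,1}\oplus\cdots\oplus E^{u,u}\oplus E^{s,1}\oplus\cdots\oplus E^{s,s}\), and it is dominated on \(\supp m\).

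\textbf{Conditional dimensions.} Apply \cref{cor:full} to \(f\) with \(E^{cs}=E^s\). The exact dimensionality of the conditionals along \(W^u=W^{u}\) (top level \(j=u\)) gives \(d^u=\delta_u\) and \(h_m(f)=\sum_k\lambda_k\gamma_k\). Apply the same result to \(f^{-1}\), whose positive bundles are the \(E^{s,k}\) and whose nonpositive complement is \(E^u\). This gives exact stable dimension \(d^s\) and \(h_m(f^{-1})=h_m(f)\) expressed along the stable direction, which establishes \eqref{eq:stable-unstable-dimensions}.

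\textbf{Upper bound.} \cref{thm:dimension-upper} with \(\dim E^c=0\) directly gives \(\ud(x)\leqslant d^u+d^s\) almost everywhere.

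\textbf{Lower bound.} It remains to show \(\ld(x)\geqslant d^u+d^s\). I would follow the scheme of \cite[Theorem~7.1]{Tong}, itself modeled on Barreira--Pesin--Schmeling. First fix a Pesin-type block on which the local stable and unstable manifolds from \cite[Section~8]{ABC} have uniform size. On this block, domination gives uniform transversality and continuity of the disks, so there is a local product structure. Then compare \(m(B(x,\rho))\) with products of conditional measures \(m^u_y(B^u(y,\rho^{1-\eps}))\) and \(m^s_z(B^s(z,\rho^{1-\eps}))\) over the block. The key mechanism is the entropy identity: the unstable conditionals carry entropy \(h_m(f)\) via \cref{thm:unstable}, and the stable conditionals carry entropy \(h_m(f^{-1})=h_m(f)\). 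This lets one estimate the measure of dynamical rectangles \(\bigcap_{-n}^{n}f^{-k}\)-cells as \(e^{-2nh}\), while also controlling their conditional sizes. Choosing \(n\) with \(\rho\approx e^{-n\lambda}\) in the usual two-sided way (different \(n\) forward and backward, adapted to the Lyapunov rates) yields \(m(B(x,\rho))\lesssim\rho^{d^u+d^s-C\eps}\).

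\textbf{Main obstacle.} The hardest step is this lower bound, because holonomies are only H\"older at \(C^1\) regularity. The product comparison therefore has to be done with dynamical (Bowen-type) balls and entropy counts, not with holonomy regularity. Since \cite[Theorem~7.1]{Tong} already carries this out in the Part~2 setting, using only the inputs (exact \(d^u,d^s\), entropy formulas for \(f\) and \(f^{-1}\), and uniform local manifolds on blocks), I would check that each of those inputs is now supplied by \cref{cor:full} and \cref{thm:unstable} without the simplicity hypotheses. With that checked, the theorem follows verbatim from that argument.
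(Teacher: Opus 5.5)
Your proposal is correct and matches the paper's route: the paper obtains the exact conditional dimensions \(d^u,d^s\) by applying \cref{cor:full} to \(f\) and \(f^{-1}\) (with \(E^c=\{0\}\) by hyperbolicity), and then invokes \cite[Theorem~7.1]{Tong} directly, which is exactly the input check you describe. Your separate appeal to \cref{thm:dimension-upper} for \(\ud\leqslant d^u+d^s\) only repackages the argument, since the paper treats the Part~2 product-structure argument as a black box for the full conclusion.
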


\subsection{Invariance principles}\label{subsec:invariance}

  Invariance principles relate Lyapunov exponents to holonomy
invariance of conditional measures, following Ledrappier's work
on linear cocycles \cite{LedrappierCocycles} and Avila and Viana's
extension to smooth cocycles \cite{AV}.

  Throughout this subsection, \(f\in\Diff^1(M)\) is dynamically
coherent and partially hyperbolic. Its dominated splitting is
\(TM=F^u\oplus F^c\oplus F^s\), with all three bundles nonzero,
\(F^u\) uniformly expanding, and \(F^s\) uniformly contracting.
Denote the strong unstable, center, and strong stable foliations
by \(\mathcal F^u,\mathcal F^c,\mathcal F^s\), and the
center--unstable and center--stable foliations by
\(\mathcal F^{cu},\mathcal F^{cs}\). Write
\(h_m(f,\mathcal F^u)\) for entropy along \(\mathcal F^u\).

\begin{definition}[Compact-center setting]\label{def:compact-center}
The hypotheses of Tahzibi and Yang
\cite[Section~2, \textup{(H1)--(H3)}]{TY} require that every center
leaf is compact, that the stable and unstable holonomies
\(H^\sigma_{x,y}\), \(y\in\mathcal F^\sigma(x)\),
\(\sigma\in\{s,u\}\), extend to homeomorphisms
\(\mathcal F^c(x)\to\mathcal F^c(y)\), and that \(f\) induces
a transitive topological Anosov homeomorphism \(f_c\) on
\(M_c=M/\mathcal F^c\).
\end{definition}

  We denote the quotient projection by \(\pi_c:M\to M_c\), so
\(\pi_c\circ f=f_c\circ\pi_c\). For \(C^2\) diffeomorphisms in
this setting, Tahzibi and Yang
\cite[Theorem~A and Proposition~5.4]{TY} proved
\[
 h_m(f,\mathcal F^u)\leqslant h_{(\pi_c)_*m}(f_c),
\]
with equality precisely when the center disintegration is
invariant under unstable holonomy. The factor inequality also
gives \(h_{(\pi_c)_*m}(f_c)\leqslant h_m(f)\); see the relative
variational principle of Ledrappier and Walters \cite{LW}.
Crovisier and Poletti \cite{CP} subsequently established a
\(C^1\) entropy criterion in the following setting, which allows
noncompact center leaves.

\Needspace{9\baselineskip}
\begin{definition}[Quasi-isometric center]\label{def:quasi-isometric-center}
Following Crovisier and Poletti \cite[Definition~1.1]{CP}, we say
that \(f\) is quasi-isometric in the center if there are constants
\(K\geqslant1\) and \(b>0\) such that
\[
 K^{-1}d^c(x,y)-b
 \leqslant d^c(f^nx,f^ny)
 \leqslant Kd^c(x,y)+b
\]
for every \(n\in\Z\) and all \(x,y\) on the same center leaf,
where \(d^c\) is the intrinsic distance on that leaf.
\end{definition}

  The compact-center setting of \cref{def:compact-center} is a
special case. Its Hausdorff leaf space, the compact-foliation
results of \cite[Theorems~4.1--4.3]{Epstein}, and uniform local
plaque geometry on the compact manifold give uniformly bounded
intrinsic center-leaf diameters. Thus \(K=1\) and a sufficiently
large \(b\) suffice.
The quasi-isometric setting also includes the discretized Anosov
flows considered in \cite{CP}.

  Write \(m_x^c\) for the leafwise center conditional measures:
normalized probabilities in the compact-center setting and
Radon measures defined up to a positive scalar otherwise.
For \(x,y\) in the full-measure set of \cite[Theorem~C]{CP}
with \(\mathcal F^{cu}(x)=\mathcal F^{cu}(y)\), let
\(H^u_{x,y}:\mathcal F^c(x)\to\mathcal F^c(y)\) be holonomy
along \(\mathcal F^u\). It sends \(z\) to the unique point of
\(\mathcal F^u(z)\cap\mathcal F^c(y)\).
Following \cite[Section~4.1]{TY} and \cite[Section~1.2]{CP}, the
center disintegration is \(u\)-invariant if, on almost
every local center--unstable plaque,
\[
 (H^u_{x,y})_*m_x^c=c_{x,y}m_y^c,\qquad c_{x,y}>0,
\]
for almost every pair of center plaques with respect to
the product of the induced quotient measure.
In the compact-center setting, normalization gives
\(c_{x,y}=1\).

\begin{corollary}[\(C^1\) invariance principle]\label[corollary]{cor:invariance}
Let \(f\) be quasi-isometric in the center as in
\cref{def:quasi-isometric-center}, and let \(m\) be an ergodic
\(f\)-invariant probability measure. If all Lyapunov exponents
of \(m\) along \(F^c\) are nonpositive, then its center
disintegration is \(u\)-invariant.
\end{corollary}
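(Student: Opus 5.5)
The plan is to reduce \cref{cor:invariance} to the entropy criterion of Crovisier and Poletti \cite{CP}. In the quasi-isometric-center setting this criterion says that the center disintegration of \(m\) is \(u\)-invariant precisely when the entropy along \(\mathcal F^u\) is maximal, that is, when
\[
 h_m(f,\mathcal F^u)=h_m(f).
\]
Since the criterion holds for \(C^1\) diffeomorphisms, it remains to prove this equality under the hypothesis that all center exponents of \(m\) are nonpositive. We would obtain it from \cref{thm:unstable}.

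\textbf{Step 1: matching the splittings.} The exponents of \(m\) along \(F^u\) are positive, since \(F^u\) is uniformly expanding. The exponents along \(F^s\) are negative, and by hypothesis the exponents along \(F^c\) are nonpositive. Hence, on a full-measure set, the Oseledets positive-exponent bundle is \(E^u=F^u\), and the nonpositive bundle is \(E^{cs}=F^c\oplus F^s\). The splitting \(F^u\oplus(F^c\oplus F^s)\) is dominated on all of \(M\), because partial hyperbolicity gives a dominated splitting. So \(E^u\oplus E^{cs}\) is dominated on \(\supp m\), and \cref{thm:unstable} yields \(h_m(f)=h_m^u(f)\). The case \(u=0\) cannot occur, since \(F^u\neq0\).

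\textbf{Step 2: identifying the two unstable entropies.} We must check that \(h_m^u(f)\), defined through the unstable manifolds \(W^u\) tangent to \(E^u\), coincides with \(h_m(f,\mathcal F^u)\). Since \(E^u=F^u\) almost everywhere, the local unstable manifolds from \cite[Proposition~8.9]{ABC} are contained in, and tangent to, the leaves of the uniform strong unstable foliation \(\mathcal F^u\). The plan here is to use uniqueness of invariant manifolds tangent to a dominated, exponentially expanded bundle. Backward contraction forces any point of \(W^u(x)\) to lie in \(\mathcal F^u(x)\), and conversely. Therefore a partition subordinate to \(W^u\) is subordinate to \(\mathcal F^u\), and the two conditional entropies \(H_m(f^{-1}\xi\mid\xi)\) agree. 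This is also the convention in \cite[Section~3.1]{GTY}.

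\textbf{Step 3: concluding.} Combining Steps 1 and 2 gives \(h_m(f,\mathcal F^u)=h_m(f)\), and \cite[Theorem~C]{CP} then gives \(u\)-invariance of the center disintegration.

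The main obstacle is making sure that the Crovisier--Poletti criterion is applied in exactly the direction stated. Their theorem compares \(h_m(f,\mathcal F^u)\) with the entropy of the center-quotient dynamics, not directly with \(h_m(f)\). We would bridge this gap using the chain
\[
 h_m(f,\mathcal F^u)\leqslant h_{\mathrm{quot}}\leqslant h_m(f),
\]
where \(h_{\mathrm{quot}}\) denotes the entropy of the quotient dynamics. The first inequality is the one in \cite{CP}. The second comes from the factor or fiber structure, since center leaves carry zero entropy under quasi-isometry (see \cite{LW} in the compact case). Equality of the two ends then forces equality in the criterion.
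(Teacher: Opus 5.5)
Your Steps 1--3 are the paper's argument. You identify $E^u=F^u$ and $E^{cs}=F^c\oplus F^s$ almost everywhere, take domination from partial hyperbolicity, apply \cref{thm:unstable} to get $h_m(f)=h_m(f,\mathcal F^u)$, and conclude by the Crovisier--Poletti entropy criterion; your Step~2 only spells out the identification of $W^u$ with $\mathcal F^u$ that the paper uses implicitly.

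The detour in your last paragraph is unnecessary and should be dropped, for three reasons:
\begin{enumerate}
\item The paper applies \cite[Main Theorem]{CP} directly to the equality $h_m(f)=h_m(f,\mathcal F^u)$. Theorem~C of \cite{CP} only supplies the full-measure set on which the holonomies and center disintegrations are defined.
\item In the noncompact quasi-isometric setting, the center-leaf quotient is in general not a standard measurable factor. So your chain through $h_{\mathrm{quot}}$ is justified only in the compact case.
\item Even there, $h_{\mathrm{quot}}\leqslant h_m(f)$ is just the factor inequality. Zero center entropy would give the reverse inequality, not this one.
\end{enumerate}
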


  This proves the \(C^1\) extension of \cite[Corollary~D]{CP}
proposed by Crovisier and Poletti in footnote~2, extending
Part~1 \cite[Section~6.1]{GTY} to arbitrary center dimension.
Indeed,
\(E^u=F^u\) and \(E^{cs}=F^c\oplus F^s\) almost everywhere.
Partial hyperbolicity gives domination of
\(F^u\oplus(F^c\oplus F^s)\), so \cref{thm:unstable} yields
\(h_m(f)=h_m(f,\mathcal F^u)\). The \(C^1\) entropy criterion
of Crovisier and Poletti \cite[Main Theorem]{CP} gives the
conclusion.

\subsection{Related work}

  Related results for expanding foliations include the unstable metric
and topological entropy of Hu, Hua, and Wu \cite{HHW}, Yang's
continuity results and applications \cite{Yang}, and the
thermodynamical \(u\)-formalism and transverse measures of Ures,
Viana, Yang, and Yang \cite{UVYYu,UVYYTransverse}.

  Liao, Tao, Tong, and Yang \cite{LTTY} established criteria for
upper semicontinuity of partial entropy for \(C^{1+\beta}\)
diffeomorphisms in terms of continuity of Lyapunov sums; their
Section~1.2.1 and Remark~6 give a \(C^1\) version under dominated
splitting assumptions.
Ding, Gan, and Tong \cite{DGT} proved an entropy characterization
of SRB measures under domination for
\(C^{1+\mathrm{Dini}}\) diffeomorphisms. The \(C^1\) examples
recalled in \cite[Section~6]{DGT} have uniformly hyperbolic
attractors without SRB measures. The entropy--dimension identities proved
here do not assert absolute continuity of unstable conditional
measures.

  Further approaches to entropy and dimension include neutralized local
entropy, introduced by Ben Ovadia and Rodriguez-Hertz \cite{BOHR},
and the \(r\)-neutralized entropy formulas of Dong and Qiao \cite{DQ}.
Ben Ovadia studied leaf measures for hyperbolic equilibrium states
\cite{BOLeaf} and introduced tubular dimension to describe leafwise
asymptotic local product structure \cite{BOTubular}. For approximation
of measure dimension by dimensions of hyperbolic sets, see Cao, Wang,
and Zhao \cite{CWZ}.

  Beyond invertible deterministic dynamics, Qian, Qian, and Xie
\cite{QQX} proved an entropy formula for random systems. For
endomorphisms, see the entropy formulas of Qian and Zhu \cite{QZ},
Qian and Xie \cite{QX}, and Shu \cite{Shu09}, and Liu's relations
between metric entropy, folding entropy, and negative Lyapunov
exponents \cite{Liu03,Liu08}.

\subsection{Outline of the proof}
The lower bound for transverse entropy is the cylinder-counting
estimate of Part~2, recalled in Section~\ref{sec:counting}.
For the reverse bound, we use adapted increasing partitions
along \(W^i\) and \(W^{i-1}\), together with coordinates
transverse to the strong unstable leaves. These are constructed
in Sections~\ref{sec:geometry}--\ref{sec:partitions}.

We cut the atoms of the \(W^i\)-partition by a transverse
grid whose scale decreases exponentially with the orbit length.
Projection along strong unstable leaves controls transverse
growth by the intermediate Lyapunov exponent. If the initial
grid scale decreases sufficiently fast, the transverse diameters
of the transported cells tend to zero uniformly over the orbit
segment, for almost every initial point. This is the geometric
input established in Section~\ref{subsec:transverse-cells}.

The new step is to estimate the conditional mass of a
transverse cell. Inside each initial cell, products of suitable
coarse conditional probabilities give weights on successive
orbit cylinders with total mass at most one. An elementary
probability comparison bounds the true cylinder information
conditioned on that cell by the accumulated comparison information
plus \(2\log n\), almost everywhere for all sufficiently large
\(n\), where \(n\) is the orbit length. Transverse shrinking
makes the contribution from visits near the coarse grid boundary
negligible, and refining the coarse grid makes the resulting
asymptotic upper bound converge to \(h_{i-1}\).
The same cylinders have information rate \(h_i\) when conditioned
only on the \(W^i\)-atom. Comparing the two conditional
probabilities through the chain rule gives a lower bound
\(h_i-h_{i-1}\) for the exponential decay rate of transverse
cell masses. This is the mass estimate in
Section~\ref{sec:information}.
Shifted grids turn it into a transverse dimension bound, and
the dimension inequalities of Part~2 and Ledrappier--Young
transfer that bound to the intrinsic conditional dimensions.
Section~\ref{sec:consequences} concludes the transverse formula,
applies the same mass estimate to the nonpositive-exponent bundle,
and obtains the full formula by induction.

\medskip
\noindent\textbf{Acknowledgements.}
The author thanks Shaobo Gan and Jiagang Yang for helpful
discussions. The author also thanks Jiagang Yang for
suggesting the project developed in this paper and
Parts~1 and~2 at the beginning of the author's doctoral studies.

\section{Geometric preliminaries}\label{sec:geometry}

  Let \(d\) and \(d^j\) denote the Riemannian distance on \(M\)
and the intrinsic distance on \(W^j\), respectively.
Fix an index \(i\) satisfying \eqref{eq:main-splitting}. We use the
fake foliations, local unstable plaques, and straightening maps from
\cite[Sections~2--3]{Tong} and
\cite[Sections~2--3 and~5.1]{GTY}. We record the properties needed below and refer to those papers
for the constructions.

  Fix \(M\) with its metric, \(f\), \(m\), the index \(i\), and
the dominated splitting \eqref{eq:main-splitting}, together with
a domination time. There is a threshold \(\eps_*>0\), depending
only on these choices, for the constructions below. Fix
\[
 0<\eps<\min\left\{\eps_*,\frac{\lambda_{i-1}-\lambda_i}{7},
                         \frac{\lambda_i}{4}\right\},\qquad
 \alpha=1-\frac{7\eps}{\lambda_{i-1}-\lambda_i}\in(0,1).
\]

  The underlying geometric framework is the invariant-manifold and
plaque-family theory of Hirsch, Pugh, and Shub \cite{HPS}; related
fake-foliation constructions appear in \cite{BW,LVY}. The leaves \(\tW_x^*(v)\) below are subsets of \(T_xM\), whereas
\(W^j(y)\) is a subset of \(M\). We use \(\exp_x\) to compare them.

\subsection{Fake foliations and their growth estimates}\label{subsec:fake-foliations}

  We use the cutoff and graph-transform construction of
\cite[Section~2.1]{GTY}, as in \cite[Sections~2.1 and~7.2.1]{Tong}.
All objects below are defined on a common invariant Borel set
\(\Gamma'\subset\Gamma_{\mathrm{reg}}\cap\supp m\) of full measure,
where \(\Gamma_{\mathrm{reg}}\) is the Oseledets regular set.
For \(v=v_{i-1}+v_i+v_{\widehat i}\) in the three summands of
\eqref{eq:main-splitting}, set
\begin{equation}\label{eq:box-norm}
 |v|'=\max\{|v_{i-1}|,|v_i|,|v_{\widehat i}|\},\qquad
 R_x(r)=\{v\in T_xM:|v|'\leqslant r\}.
\end{equation}
Here \(|\cdot|\) is the Riemannian norm. The angles of the
splitting give a uniform constant \(C_1\geqslant1\) such that
\(C_1^{-1}|v|\leqslant|v|'\leqslant C_1|v|\).
On sums of the three subspaces, we use the induced maximum norm.

  For a sufficiently small uniform radius \(r_0>0\), the local map
\[
 \widetilde f_x=\exp_{fx}^{-1}\circ f\circ\exp_x
\]
is defined on \(\{v\in T_xM:|v|\leqslant2C_1r_0\}\) for every
\(x\in\supp m\), and \(D_0\widetilde f_x=Df_x\).
Choose a smooth function \(\chi:[0,\infty)\to[0,1]\) with
\(\chi=1\) on \([0,1]\) and \(\chi=0\) on \([2,\infty)\).
Cut off the nonlinear part of \(\widetilde f_x\) to define
\(\tg_x:T_xM\to T_{fx}M\) by
\begin{equation}\label{eq:cutoff-extension}
 \tg_x(v)=
 \begin{cases}
 Df_xv+\chi\!\left(\dfrac{|v|}{C_1r_0}\right)
          \bigl(\widetilde f_x(v)-Df_xv\bigr),
       & |v|\leqslant2C_1r_0,\\[2pt]
 Df_xv, & |v|>2C_1r_0.
 \end{cases}
\end{equation}
The cutoff estimates in \cite[Section~2.1]{GTY} give
\[
 \sup_{x\in\supp m}\sup_{v\in T_xM}
 \|D_v\tg_x-Df_x\|\longrightarrow0
 \qquad(r_0\downarrow0).
\]
Thus, after decreasing \(r_0\), each \(\tg_x\) is a global
\(C^1\) diffeomorphism, with \(\tg_x(0)=0\) and
\(D_0\tg_x=Df_x\). The factor \(C_1\) in the cutoff ensures
\(\tg_x=\widetilde f_x\) on the box \(R_x(r_0)\).
For \(n\geqslant1\), write
\[
 \tg_x^n=\tg_{f^{n-1}x}\circ\cdots\circ\tg_x,\qquad
 \tg_x^0=\mathrm{id},\qquad
 \tg_x^{-n}=(\tg_{f^{-n}x}^n)^{-1}.
\]
These iterates are defined on the whole tangent space. Along
orbit segments staying in the agreement boxes, they coincide
with the iterates of \(f\) in exponential coordinates.

  A complete graph over \(F\subset T_xM\), relative to a
complement \(G\), is a set
\(\{w+\psi(w):w\in F\}\), where \(\psi:F\to G\) is \(C^1\).
Its slope is \(\sup_w\|D\psi(w)\|\), measured in the splitting
norms above. A fake foliation modeled on \(F\) is a partition
of \(T_xM\) into such graphs. We use the four bundles in the two
complementary pairs
\[
 E^{(i-1)}\oplus E^{\widehat{(i-1)}},\qquad
 E^{(i)}\oplus E^{\widehat{(i)}},\qquad
 \text{ with }\  E^{\widehat{(i-1)}}=E^{u,i}\oplus E^{\widehat{(i)}}.
\]
The complement of each bundle is the other member of its pair.
Denote the leaf through \(v\) of the foliation modeled on
\(E_x^*\) by \(\tW_x^*(v)\). The graph-transform construction
\cite[Lemma~2.2]{Tong} gives the following properties.

\begin{lemma}[Fake foliations]\label[lemma]{lem:fake-foliations}
For every \(0<\theta\leqslant1/4\), one can choose \(r_0>0\)
sufficiently small so that there are fake foliations modeled
on the four bundles above, with leaves that are complete
\(C^1\) graphs of slope at most \(\theta/10\). For every \(x\in\Gamma'\) and \(v\in T_xM\),
\[
 \tg_x\bigl(\tW_x^*(v)\bigr)
 =\tW_{fx}^*(\tg_xv),
 \qquad *\in\{(i-1),(i),\widehat{(i)},\widehat{(i-1)}\}.
\]
The \((i-1)\)-foliation subfoliates the \((i)\)-foliation, so
\(\tW_x^{(i-1)}(v)\subset\tW_x^{(i)}(v)\).
In local bundle trivializations, the leaf graphs depend
continuously on \((x,v)\) in the topology of uniform convergence
on compact subsets of their graph domains.
\end{lemma}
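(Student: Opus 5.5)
The plan is to build the four fake foliations by the standard Hadamard--Perron graph transform for the globally defined maps \(\tg_x\), treating each complementary pair separately. First I would fix \(\theta\) and choose \(r_0\) so small that \(\sup_{x,v}\|D_v\tg_x-Df_x\|\) is below a threshold \(\eta(\theta)\). By domination of \eqref{eq:main-splitting} with time \(N\), the linear maps \(Df_x^N\) satisfy a uniform cone condition for both pairs \(E^{(i-1)}\oplus E^{\widehat{(i-1)}}\) and \(E^{(i)}\oplus E^{\widehat{(i)}}\). For small \(\eta\), the same holds for \(D\tg_x^N\) at every point of \(T_xM\). Precisely, the cone of width \(\theta/10\) around \(E^*_x\) in the max norm \eqref{eq:box-norm} is mapped strictly into the corresponding cone at \(f^Nx\) for the unstable-type bundles \(E^{(i-1)},E^{(i)}\), and the analogous statement holds under \(\tg^{-N}\) for \(E^{\widehat{(i-1)}},E^{\widehat{(i)}}\). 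I would work with \(\tg^N\) first and then recover the single-step invariance from uniqueness.

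Next, for the bundle \(E^{(j)}\), \(j\in\{i-1,i\}\), I would define the leaf through \(v\in T_xM\) as the limit of backward graph transforms. Take the affine plane \(\tg_{f^{-n}x}^{-n}\)-preimage data, that is, push the flat graphs \(\tg^n_{f^{-n}x}(w+E^{(j)}_{f^{-n}x})\) forward and pick the one through \(v\). The cone invariance keeps every such image a complete graph over \(E^{(j)}_x\) of slope at most \(\theta/10\); completeness comes from the global properness of \(\tg\). Domination contracts the graph space in the \(C^0\) metric, because complementary directions are expanded relative to the graph direction. This gives a unique limit graph through each point, and these graphs partition \(T_xM\). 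The \(\widehat{(j)}\) leaves are constructed symmetrically with \(\tg^{-1}\). The invariance \(\tg_x(\tW^*_x(v))=\tW^*_{fx}(\tg_xv)\) then follows because both sides are limits of the same sequence shifted by one step, together with uniqueness.

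The subfoliation property \(\tW^{(i-1)}_x(v)\subset\tW^{(i)}_x(v)\) is the step I expect to be the main obstacle. My first approach would be to run the \((i-1)\)-graph transform inside each \((i)\)-leaf. Each \(\tW^{(i)}\)-leaf is a graph over \(E^{(i)}\), and \(\tg\) maps leaves to leaves by invariance. Inside a leaf, domination of \(E^{(i-1)}\) over \(E^{u,i}\) gives the cone contraction needed to produce invariant subgraphs of slope \(\le\theta/10\). These subgraphs satisfy the characterization of the \((i-1)\)-leaves, namely the bounded backward relative growth property: their points have backward orbits whose \(E^{\widehat{(i-1)}}\)-separation grows slower than a fixed rate. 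By uniqueness, they coincide with \(\tW^{(i-1)}\).

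Finally, continuity in \((x,v)\) follows from the construction. Each leaf is the uniform limit of finitely many graph transforms, these depend continuously on the data in local trivializations, and the convergence is uniform with a geometric rate. Since \(r_0\) depends only on \(\theta\), the fixed data and the domination constants, it is uniform over \(\Gamma'\).
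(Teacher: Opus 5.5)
Your plan matches what the paper does: the paper gives no separate argument for this lemma and invokes the cutoff-plus-graph-transform construction of Part~2 (Lemma~2.2), built on Part~1 (Section~2.1), which is the Hadamard--Perron scheme you describe. Three small refinements are worth making: measure the contraction relative to the common point \(v\) (e.g.\ by \(\sup_{u\neq u_v}|\psi(u)-\psi'(u)|/|u-u_v|\)), since under mere domination the complement may expand (as \(E^{u,i}\) does for the \((i-1)\)-leaves) and a plain \(C^0\) distance need not contract; add the standard tangent-plane transform to get \(C^1\) rather than merely Lipschitz leaves; and note that the subfoliation you flag as the main obstacle is immediate in your own scheme, because the flat planes \(w+E^{(i-1)}_{f^{-n}x}\subset w+E^{(i)}_{f^{-n}x}\) through the same point \(w=\tg_x^{-n}v\) give nested approximants, and nestedness passes to the locally uniform limits.
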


  Define the fake \(i\)-leaves by
\begin{equation}\label{eq:fake-center}
 \tW_x^i(v)=\tW_x^{(i)}(v)\cap\tW_x^{\widehat{(i-1)}}(v).
\end{equation}
As in \cite[Section~2.1]{Tong}, they are complete graphs over
\(E_x^{u,i}\), relative to \(E_x^{(i-1)}\oplus E_x^{\widehat{(i)}}\),
and are invariant under \(\tg_x\). Their slopes are at most
\(\theta\), also in the \(E^{(i-1)}\times E^{u,i}\) graph coordinates
of \(\tW_x^{(i)}\). We call \(\tW_x^i(0)\) the standard fake
\(i\)-leaf; it is the \(d_i\)-dimensional transversal used below.
In contrast, the genuine unstable manifold \(W^i(x)\) is tangent
to the bundle \(E^{(i)}\) and corresponds locally to
\(\tW_x^{(i)}(0)\), not to \(\tW_x^i(0)\).

  The length estimates follow from the block estimates of
\cite[Lemma~2.1]{Tong} by the local argument of
\cite[Remark~3]{GTY}. The radius and slope bounds above can be
chosen so that they hold simultaneously with the fake-foliation
properties. Since all exponents on \(E^{u,i}\) equal \(\lambda_i\),
the maximal exponents of the forward and backward cocycles on
this bundle are \(\lambda_i\) and \(-\lambda_i\), respectively.

\begin{lemma}[Length bounds]\label[lemma]{lem:fake-lengths}
There are a Borel function \(A:\Gamma'\to[1,\infty)\) and a
constant \(C_2\geqslant1\) such that
\begin{equation}\label{eq:tempered}
 A(f^{\pm1}x)\leqslant e^\eps A(x).
\end{equation}
For every \(x\in\Gamma'\), \(n\geqslant0\), and rectifiable
curve \(\gamma\) in a fake \(i\)-leaf in \(T_xM\),
\begin{align}
 \operatorname{length}(\tg_x^n\gamma)
 &\leqslant C_2 A(x)e^{(\lambda_i+3\eps)n}
                 \operatorname{length}(\gamma),\label{eq:F-forward}\\
 \operatorname{length}(\tg_x^{-n}\gamma)
 &\leqslant C_2 A(x)e^{(-\lambda_i+3\eps)n}
                 \operatorname{length}(\gamma).\label{eq:F-backward}
\end{align}
For \(j=i-1,i\), a rectifiable curve \(\gamma\) in a fake
\((j)\)-leaf in \(T_xM\) satisfies
\begin{equation}\label{eq:strong-backward}
 \operatorname{length}(\tg_x^{-n}\gamma)
 \leqslant C_2 A(x)e^{(-\lambda_j+3\eps)n}
                 \operatorname{length}(\gamma).
\end{equation}
Lengths use the Riemannian norms on the tangent spaces, and
\(C_2\) is independent of \(x,n,\gamma\).
\end{lemma}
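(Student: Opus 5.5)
We would build \(A\) as a tempered Lusin-type constant from the Oseledets cocycle, and then move from linear growth to the fake dynamics by a perturbation argument. The block estimates of \cite[Lemma~2.1]{Tong} are stated for \(Df\). Along each of \(E^{u,i}\) and \(E^{(j)}\), \(j=i-1,i\), they give
\[
 \|Df^n_x|_{E^{u,i}}\|\leqslant A_0(x)e^{(\lambda_i+\eps)n},\qquad
 \|Df^{-n}_x|_{E^{u,i}}\|\leqslant A_0(x)e^{(-\lambda_i+\eps)n},\qquad
 \|Df^{-n}_x|_{E^{(j)}}\|\leqslant A_0(x)e^{(-\lambda_j+\eps)n}.
\]
Here \(A_0\) is finite on the regular set. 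For the first two bounds we use that all exponents on \(E^{u,i}\) equal \(\lambda_i\). For the third we use that the smallest exponent on \(E^{(j)}\) is \(\lambda_j\). We then replace \(A_0\) by its standard \(\eps\)-tempering \(A(x)=\sup_{k\in\Z}A_0(f^kx)e^{-\eps|k|}\). This is finite almost everywhere because \(\frac1k\log A_0(f^kx)\to0\), which follows from Oseledets regularity applied to the cocycle bounds. It satisfies \eqref{eq:tempered} by construction, and we shrink \(\Gamma'\) to an invariant full-measure set where \(A<\infty\).

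Next we pass from \(Df\) to \(\tg\) on fake leaves, following \cite[Remark~3]{GTY}. A tangent vector to a fake \(i\)-leaf lies in a cone of width \(\theta\) around \(E^{u,i}\), because the leaf is a graph of slope at most \(\theta\). Likewise, a tangent vector to a fake \((j)\)-leaf lies in a cone of width \(\theta\) around \(E^{(j)}\). Invariance of the fake foliations under \(\tg_x\) (\cref{lem:fake-foliations}) keeps tangent vectors of \(\tg^n_x\gamma\) in these cones along the orbit. Since \(\|D_v\tg_x-Df_x\|\to0\) uniformly as \(r_0\downarrow0\), the derivative of \(\tg_x\) on such a vector is \(Df_x\) applied to its component in the bundle, plus an error of relative size \(\eta(\theta,r_0)\). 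Domination makes the cone-to-bundle projection uniformly bounded, which lets us compare these components.

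The obstacle is that a naive one-step comparison costs a factor \(A_0(f^kx)\) at every step, and these factors do not multiply to anything useful. We would handle this in the Pesin style. Define a Lyapunov norm \(|w|_x=\sum_{n\in\Z}|Df^n_xw|e^{-(\lambda_i\,\mathrm{sign}(n)\,n+2\eps|n|)}\) on \(E^{u,i}\), and define the analogous norm on \(E^{(j)}\) using the backward contraction only. In these norms \(Df\) expands by at most \(e^{\lambda_i+2\eps}\), and \(Df^{-1}\) by at most \(e^{-\lambda_j+2\eps}\), in one step. The comparison with the Riemannian norm is controlled by \(A(x)\), up to a constant. The perturbation adds an error that is an \(\eta\) fraction of the one-step bound. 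This error is uniformly small relative to the Riemannian norm, but relative to the Lyapunov norm it is multiplied by \(A\). So we need \(\eta A(f^kx)\) to be small along the orbit.

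Here we use the freedom, stated before the lemma, to choose the radius and slope bounds simultaneously. We let \(r_0\) and \(\theta\) be small in terms of \(\eps\), and use a cutoff radius that depends on the point in the tempered way, as in \cite[Remark~3]{GTY}. Equivalently, we use the fact that off the box \(\tg_x\) is exactly linear, while inside the box the deviation is controlled. With this choice the per-step growth in the Lyapunov norm is at most \(e^{\lambda_i+3\eps}\), or \(e^{-\lambda_j+3\eps}\) backwards. Integrating along \(\gamma\) and converting the norms at both ends gives \eqref{eq:F-forward}, \eqref{eq:F-backward} and \eqref{eq:strong-backward}, with \(C_2\) absorbing the cone-projection constants and \(C_1\). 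Making this per-step absorption uniform is where we expect the main work.
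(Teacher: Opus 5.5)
Your construction of a tempered \(A\) for the linear cocycle is fine. There is one sign slip: the weight \(e^{-(\lambda_i\,\mathrm{sign}(n)\,n+2\eps|n|)}=e^{-(\lambda_i+2\eps)|n|}\) gives a backward one-step factor \(e^{\lambda_i+2\eps}\), so you want \(e^{-\lambda_i n-2\eps|n|}\). Your cone observation is also correct: invariance of the fake foliations keeps tangent vectors in \(\theta\)-cones.

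\textbf{The gap.} The real problem is the passage from \(Df\) to \(\tg\). At \(C^1\) regularity this is the whole content of the lemma, and it is exactly the step you defer.

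In the lemma, \(\tg_x\) is the fixed extension \eqref{eq:cutoff-extension} with a \emph{uniform} cutoff radius \(r_0\). So the bound \(\|D_v\tg_x-Df_x\|\leqslant\eta\) is uniform in the Riemannian norm. Measured in your Lyapunov norm at \(f^{k+1}x\), the same error can be as large as \(\eta A(f^{k+1}x)\). Since \(A\) is in general unbounded, ergodicity makes typical orbits spend a positive fraction of time where \(\eta A\gg1\). At those times the one-step factor \(e^{\lambda_i+3\eps}\) is not available.

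Your proposed cure, a tempered point-dependent cutoff, fails at this regularity, for three reasons.
\begin{itemize}
\item The \(C^1\) size of the cut-off nonlinear part on a ball of radius \(r\) is of order \(\omega(r)\), where \(\omega\) is a modulus of continuity of \(Df\) with no rate. For \(r(x)=r_0A(x)^{-1}\), the product \(\omega(r_0/A)\,A\) need not be small. This is precisely where Pesin theory uses \(C^{1+\beta}\) and radii of order \(A^{-1/\beta}\).
\item Taking instead \(r(x)=\omega^{-1}(c/A(x))\) destroys temperedness. You would then lose the agreement boxes \(R_x(2\rho_\delta(x))\) of \cref{lem:plaque-input}, and with them the identification of unstable plaques with fake leaves.
\item Either choice changes \(\tg_x\) and the fake foliations of \cref{lem:fake-foliations}, which are the objects the lemma is about.
\end{itemize}

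\textbf{The missing idea.} Replace Lyapunov norms by finite-block estimates together with domination, as in \cite[Section~8.1]{ABC}. This is what the paper invokes through \cite[Lemma~2.1]{Tong} and \cite[Remark~3]{GTY}.
\begin{itemize}
\item The relevant pairs \((E,\lambda)\) are \((E^{u,i},\lambda_i)\) for \(f\), and \((E^{u,i},-\lambda_i)\) and \((E^{(j)},-\lambda_j)\) for \(f^{-1}\). For each, the subadditive ergodic theorem gives \(N\) with \(\frac1N\int\log\|Df^{\pm N}|_E\|\,dm<\lambda+\eps\).
\item Domination gives uniform angles between the bundles. Also, \(\|Df^{\pm N}_x|_E\|\) is bounded below by a positive constant depending only on \(N\).
\item Hence, for this fixed \(N\), taking \(\theta\) and \(r_0\) small enough gives \(|D(\tg_x^{\pm N})_v w|\leqslant e^{N\eps}\|Df^{\pm N}_x|_E\|\,|w|\). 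This holds for all \(x\), all \(v\), and all \(w\) tangent to a fake leaf of the corresponding type. The perturbation is now a uniform multiplicative factor and is never amplified by \(A\).
\item Invariance of the fake foliations lets you chain these \(N\)-blocks. Average over the \(N\) block offsets and apply Birkhoff's theorem to the bounded function \(\log\|Df^{\pm N}|_E\|\). This gives the rate \(\lambda+3\eps\).
\item Define \(A\) from the maximal excess of these sums over \(n(\lambda+2\eps)\), then \(\eps\)-temper it as in your first step.
\end{itemize}
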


  Fix a common choice of \(r_0,\theta,A,C_2\) for the chosen
\(\eps\), before choosing \(\delta\). Numbered constants retain
their values; they may depend on these choices, but not on orbit
length. Dependence on a base point is indicated by a subscript.

\subsection{Holonomy and local unstable manifolds}

  An \(i\)-transversal is an open subset of a complete \(C^1\) graph
over \(E_x^{u,i}\), with slope at most the fixed \(\theta\), contained in
\(\tW_x^{(i)}(0)\). It has dimension \(d_i\). For two such transversals
\(U,V\), holonomy along the fake strong leaves is the map
\[
 \widetilde h_x^{(i-1)}:U\longrightarrow V,\qquad
 \{\widetilde h_x^{(i-1)}(p)\}=\tW_x^{(i-1)}(p)\cap V,
\]
on a domain where the intersection exists uniquely and the map is
injective. Its inverse is the holonomy in the opposite direction.
A map is \(\alpha\)-bi-H\"older if it is a homeomorphism onto its image
and both it and its inverse are \(\alpha\)-H\"older. The proof of
\cite[Lemma~2.4]{GTY}, with spectral gap
\(\lambda_{i-1}-\lambda_i\), gives the following
higher-dimensional form of \cite[Lemma~2.3]{Tong}.

\begin{lemma}[Uniform holonomy estimate]\label[lemma]{lem:holonomy-input}
For every \(K\geqslant1\) and \(R>0\) there is a constant
\(C_{3}(K,R)\geqslant1\) with the following property. If
\(x\in\Gamma'\), \(A(x)\leqslant K\), \(U,V\) are \(i\)-transversals,
\(\diam U\leqslant R\), and
\(\sup_{p\in U}|p-\widetilde h_x^{(i-1)}(p)|\leqslant R\), then
\begin{equation}\label{eq:holonomy}
 C_{3}(K,R)^{-1}|p-q|^{1/\alpha}
 \leqslant|\widetilde h_x^{(i-1)}(p)-\widetilde h_x^{(i-1)}(q)|
 \leqslant C_{3}(K,R)|p-q|^\alpha
\end{equation}
for \(p,q\in U\). The constant is uniform over these base points,
transversals, and holonomy domains.
\end{lemma}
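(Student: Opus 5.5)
The plan is the standard graph-transform argument for Hölder continuity of holonomies, run inside the fake \((i)\)-foliation on \(T_xM\). Here the fake strong \((i-1)\)-leaves contract backwards at rate at least \(\lambda_{i-1}-3\eps\) by \eqref{eq:strong-backward}, while the fake \(i\)-leaves contract backwards at rate at most \(\lambda_i+3\eps\), up to the tempered constant \(A\). It suffices to prove the upper bound in \eqref{eq:holonomy}. The lower bound then follows by applying the same argument to the inverse holonomy \(V\to U\), which is again a holonomy along the fake \((i-1)\)-leaves between \(i\)-transversals with the same displacement bound \(R\). Its Hölder estimate \(|p-q|\leqslant C|h(p)-h(q)|^\alpha\) rearranges to \(|h(p)-h(q)|\geqslant C^{-1/\alpha}|p-q|^{1/\alpha}\).

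Fix \(p,q\in U\) with \(|p-q|=\rho\) small; large \(\rho\) is absorbed into \(C_3(K,R)\) using \(\diam U\leqslant R\) and the displacement bound. Choose \(n\geqslant0\) with \(e^{-(\lambda_{i-1}-3\eps)n}R\approx e^{(\lambda_i+3\eps)n}\rho\), up to constants depending on \(K\). Apply \(\tg_x^{-n}\), using invariance of all fake foliations (\cref{lem:fake-foliations}). The backward images \(\tg_x^{-n}U\) and \(\tg_x^{-n}V\) are graphs over \(E^{u,i}\) of slope at most \(\theta\), since backward graph transform preserves the cone of \(E^{u,i}\) relative to the faster bundle \(E^{(i-1)}\) by domination. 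The strong-leaf segments joining \(p\) to \(h(p)\) and \(q\) to \(h(q)\) have length at most \(C R\), by the slope bound and the displacement hypothesis. By \eqref{eq:strong-backward} with \(j=i-1\), after \(n\) steps they have length \(\leqslant C_2K e^{(-\lambda_{i-1}+3\eps)n}CR\). The points \(\tg^{-n}p\) and \(\tg^{-n}q\) are joined by a curve in the fake \(i\)-leaf-like graph \(\tg^{-n}U\). Here I will use the \(d_i\)-dimensional version of \eqref{eq:F-backward} on transversals of slope \(\leqslant\theta\), which holds by the same block estimate of \cite[Lemma~2.1]{Tong}. Their distance is therefore at most \(C_2Ke^{(-\lambda_i+3\eps)n}C\rho\).

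At time \(-n\) the two images \(\tg^{-n}h(p)\) and \(\tg^{-n}h(q)\) lie on \(\tg^{-n}V\), a graph of slope \(\leqslant\theta\) over \(E^{u,i}\). Their distance is bounded by the triangle inequality through the strong segments together with the transversality of small-slope graphs over \(E^{u,i}\) and over \(E^{(i-1)}\). This gives \(\lesssim e^{-(\lambda_i-3\eps)n}\rho+e^{-(\lambda_{i-1}-3\eps)n}R\). Push forward by \(\tg^n\) using the forward expansion bound \eqref{eq:F-forward}, applied along a curve in \(\tg^{-n}V\) with base point \(f^{-n}x\), where \(A(f^{-n}x)\leqslant e^{\eps n}K\). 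This yields
\[
|h(p)-h(q)|\lesssim e^{(\lambda_i+4\eps)n}\bigl(e^{-(\lambda_i-3\eps)n}\rho+e^{-(\lambda_{i-1}-3\eps)n}R\bigr)\lesssim e^{7\eps n}\rho .
\]
The choice of \(n\) gives \(e^{n}\approx(R/\rho)^{1/(\lambda_{i-1}-\lambda_i+6\eps)}\). Hence \(e^{7\eps n}\rho\lesssim\rho^{1-7\eps/(\lambda_{i-1}-\lambda_i)}=\rho^\alpha\), up to slightly adjusting constants in \(\eps\), which is exactly how \(\alpha\) was defined.

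The main obstacle is bookkeeping of the tempered constant \(A\) along the backward orbit and making the "same leaf class" geometry uniform. Three points need control:
\begin{enumerate}[label=\textup{(\alph*)},leftmargin=2em]
\item the backward images of \(U\) and \(V\) stay graphs of slope \(\leqslant\theta\) over \(E^{u,i}\) inside the fake \((i)\)-leaf;
\item the intrinsic curve distance on such graphs is comparable to the Euclidean distance;
\item the length estimates apply to curves in these transversals rather than only in genuine fake \(i\)-leaves.
\end{enumerate}
I would handle (c) by noting that the block estimates in \cite[Lemma~2.1]{Tong} depend only on tangent vectors lying in a \(\theta\)-cone around \(E^{u,i}\) within \(E^{(i)}\), which is \(D\tg^{-1}\)-invariant by domination. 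This is the content of \cite[Lemma~2.4]{GTY} with the spectral gap \(\lambda_{i-1}-\lambda_i\).
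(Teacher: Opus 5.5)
Your proposal is correct and is essentially the paper's argument. The paper simply invokes the backward-iteration proof of \cite[Lemma~2.4]{GTY} with gap $\lambda_{i-1}-\lambda_i$, which is exactly your scheme: the strong segments contract at rate $\lambda_{i-1}$ against rate $\lambda_i$ on the transversal, and you then compare forward lengths along the backward images of $V$, which domination keeps in the $E^{u,i}$-cone. The loss $3\eps+3\eps+\eps$ is what produces $\alpha=1-7\eps/(\lambda_{i-1}-\lambda_i)$.

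One bookkeeping fix is needed, because your two descriptions of $n$ are inconsistent: take $n$ least with $e^{(\lambda_{i-1}-\lambda_i)n}\geqslant R/\rho$. Then both terms are at most $e^{7\eps n}\rho\lesssim\rho^{\alpha}$, with exactly this $\alpha$ and no adjustment of $\eps$. Also, the backward step for $p,q$ is simply \eqref{eq:strong-backward} with $j=i$, since they lie in $\tW_x^{(i)}(0)$.
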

\begin{proof}
Apply the proof of \cite[Lemma~2.4]{GTY} with the spectral gap
\(\lambda_{i-1}-\lambda_i\), as in \cite[Lemma~2.3]{Tong}.
The argument uses distances and length estimates on graphs
and applies to \(d_i\)-dimensional transversals without change.
It gives the bounds with exponent
\(\alpha=1-7\eps/(\lambda_{i-1}-\lambda_i)\), uniformly under the
stated bounds on \(A(x)\) and the holonomy displacement.
\end{proof}

  For \(x\in\Gamma'\), projection along the leaves
\(\tW_x^{(i-1)}\) onto the standard fake \(i\)-leaf is defined by
\begin{equation}\label{eq:strong-projection}
 \Pi_x^i:\tW_x^{(i)}(0)\longrightarrow\tW_x^i(0),\qquad
 \{\Pi_x^i(v)\}=\tW_x^{(i-1)}(v)\cap\tW_x^i(0).
\end{equation}
Its restriction to an \(i\)-transversal is holonomy along
\(\tW_x^{(i-1)}\).

  To verify that this projection is well defined, use graph coordinates
on \(\tW_x^{(i)}(0)\) and write
\(\iota_x(u,t)=u+t+b_x(u,t)\), where
\(u\in E_x^{(i-1)}\), \(t\in E_x^{u,i}\),
\(b_x(u,t)\in E_x^{\widehat{(i)}}\), \(b_x(0,0)=0\), and
\(\operatorname{Lip}b_x\leqslant\theta\).
The standard transversal and the strong leaf through
\(v=\iota_x(u_0,t_0)\) are respectively
\[
 \{\iota_x(\psi(t),t):t\in E_x^{u,i}\},\qquad
 \{\iota_x(u,\varphi(u)):u\in E_x^{(i-1)}\},
\]
with \(\psi(0)=0\), \(\varphi(u_0)=t_0\), and both Lipschitz
constants at most \(\theta<1\). The intersection corresponds to
the unique fixed point \(t_*\) of the \(\theta^2\)-contraction
\(t\mapsto\varphi(\psi(t))\). Since
\[
 |t_*|\leqslant(1+\theta)|v|'+\theta^2|t_*|,
\]
we have \(|t_*|\leqslant|v|'/(1-\theta)\). The graph slope bounds
then give
\begin{equation}\label{eq:projection-bound}
 |\Pi_x^i(v)|'\leqslant C_4|v|',\qquad C_4=(1-\theta)^{-1}.
\end{equation}
Uniqueness shows that \(\Pi_x^i\) fixes \(\tW_x^i(0)\) pointwise and
\begin{equation}\label{eq:projection-fibers}
 \Pi_x^i(v)=\Pi_x^i(w)
 \quad\Longleftrightarrow\quad
 \tW_x^{(i-1)}(v)=\tW_x^{(i-1)}(w).
\end{equation}
Invariance of both fake foliations gives the equivariance
\begin{equation}\label{eq:projection-equivariance}
 \Pi_{fx}^i\circ\tg_x=\tg_x\circ\Pi_x^i
 \quad\text{on }\tW_x^{(i)}(0).
\end{equation}

  For \(\delta>0\), the \(\delta\)-fake foliation chart at \(x\) is the pair
\((R_x(\rho_\delta(x)),\exp_x)\), where
\begin{equation}\label{eq:rho}
 \rho_\delta(x)=\delta A(x)^{-1}.
\end{equation}
For \(j=i-1,i\) and a regular point \(y\) in the chart at \(x\),
\(W^j_{x,\delta}(y)\) denotes the inverse exponential image of the
connected component through \(y\), in the intrinsic leaf topology, of
\(W^j(y)\cap\exp_xR_x(\rho_\delta(x))\). The center \(x\) determines
the chart radius even when the plaque passes through \(y\).
Tong \cite[Lemmas~2.4--2.5]{Tong} identifies these local plaques using
the two coarsenings of \eqref{eq:main-splitting}.

\begin{lemma}[Identification of local plaques]\label[lemma]{lem:plaque-input}
With the fake foliations and length bounds in
\cref{lem:fake-foliations,lem:fake-lengths} fixed, there is
\(\delta_*>0\) such that for
\(0<\delta\leqslant\delta_*\) and \(x\in\Gamma'\), the boxes
\(R_x(2\rho_\delta(x))\) lie in the
domains of agreement of the extensions with \(f\), and
\begin{equation}\label{eq:positive-plaque}
 W^i_{x,\delta}(x)=\tW_x^{(i)}(0)\cap R_x(\rho_\delta(x)).
\end{equation}
For every regular \(y\in\exp_xW^i_{x,\delta}(x)\),
\begin{equation}\label{eq:strong-plaque}
 W^{i-1}_{x,2\delta}(y)
 =\tW_x^{(i-1)}(\exp_x^{-1}y)\cap R_x(2\rho_\delta(x)).
\end{equation}
\end{lemma}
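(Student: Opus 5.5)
The plan is to obtain \cref{lem:plaque-input} by characterizing both sides of each identity dynamically, through backward contraction, and then matching the characterizations. First choose \(\delta_*\) so that \(2\delta_*\leqslant r_0\). Since \(A\geqslant1\), every box \(R_x(2\rho_\delta(x))\) then lies in \(R_x(r_0)\), where \(\tg_x=\widetilde f_x\). The next step is to check that backward orbits also stay in agreement boxes. Use the temperedness \eqref{eq:tempered}, \(\rho_\delta(f^{-n}x)\geqslant e^{-\eps n}\rho_\delta(x)\), together with the backward contraction \eqref{eq:strong-backward} for \(j=i\). A point \(v\in\tW_x^{(i)}(0)\cap R_x(\rho_\delta(x))\) can be joined to \(0\) by a curve in the leaf of length at most \(C\rho_\delta(x)\), since the slope is at most \(\theta\). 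Its image under \(\tg_x^{-n}\) then has length at most \(C_2A(x)e^{(-\lambda_i+3\eps)n}C\rho_\delta(x)=C_2C\delta e^{(-\lambda_i+3\eps)n}\). Because \(\eps<\lambda_i/4\), this is much smaller than \(\rho_\delta(f^{-n}x)\geqslant\delta A(x)^{-1}e^{-\eps n}\) once we adjust by the tempered factor. More precisely, apply the estimate at each base point \(f^{-k}x\), and shrink \(\delta_*\) by a factor involving \(C_2\) and \(C_1\). Hence the whole backward orbit of the plaque stays in the agreement boxes, so \(\tg_x^{-n}\) coincides with \(f^{-n}\) in exponential coordinates along it.

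For \eqref{eq:positive-plaque}, we use the characterization of \(W^i(x)\) from \cite[Proposition~8.9]{ABC}: it consists of points whose backward orbits approach that of \(x\) exponentially at rate at least close to \(\lambda_i\). The previous step shows that \(\exp_x(\tW_x^{(i)}(0)\cap R_x(\rho_\delta(x)))\) consists of such points. This set is also a connected \(C^1\) disk through \(x\) tangent to \(E^{(i)}_x\). For the reverse inclusion we use domination. A point of the box not on the fake leaf \(\tW^{(i)}_x(0)\) has a nonzero \(E^{\widehat{(i)}}\)-graph displacement. Under backward iteration this displacement grows relative to the \(E^{(i)}\)-directions, by the cone argument from domination, so the point exits the tempered boxes. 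It therefore cannot belong to the unstable set characterized by exponential backward convergence. Combining the two inclusions with connectedness in the intrinsic topology gives equality with the component \(W^i_{x,\delta}(x)\).

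For \eqref{eq:strong-plaque}, repeat the argument one level down. Let \(y\) be regular in the plaque and put \(w=\exp_x^{-1}y\). The leaf \(\tW_x^{(i-1)}(w)\subset\tW_x^{(i)}(0)\) is invariant, and by \eqref{eq:strong-backward} with \(j=i-1\) it contracts backward at rate \(\lambda_{i-1}-3\eps\). The doubled radius \(2\rho_\delta(x)\) leaves room because \(y\) already lies in \(R_x(\rho_\delta(x))\). Its backward orbit therefore tracks that of \(y\) within the boxes \(R_{f^{-n}x}(2\rho_\delta(f^{-n}x))\), which the first step showed to be valid for \(2\delta\leqslant2\delta_*\). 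Thus the leaf lies in the strong unstable set of \(y\). Conversely, a point of \(W^{i-1}(y)\) in the box lies in \(W^i(y)=W^i(x)\), hence on \(\tW^{(i)}_x(0)\) by the first identity applied at radius \(2\rho_\delta\). If it lay on a different \((i-1)\)-leaf, the projection \(\Pi^i_x\) would separate the two points in the \(E^{u,i}\)-direction. By domination \(E^{(i-1)}\oplus E^{u,i}\), the backward distance would then decay only at rate about \(\lambda_i\), which is slower than \(\lambda_{i-1}\). That contradicts the defining rate of \(W^{i-1}\).

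The main obstacle is the reverse inclusions: showing that a point of the true unstable leaf lying in the box cannot sit on another fake leaf. This requires uniform cone estimates with tempered constants that do not depend on \(n\). I would handle it as \cite[Lemmas~2.4--2.5]{Tong} does, comparing the transverse graph component along the backward orbit against the \(\rho_\delta\) radii.
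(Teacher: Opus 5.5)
The paper does not reprove this lemma; it imports \cite[Lemmas~2.4--2.5]{Tong}, applied to the two coarsenings $E^{(i)}\oplus E^{\widehat{(i)}}$ and $E^{(i-1)}\oplus E^{\widehat{(i-1)}}$. Your inclusions of the fake plaques into the genuine leaves are in the right spirit. The reverse inclusion for \eqref{eq:positive-plaque}, however, rests on a false step.

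\textbf{The reverse inclusion.} A nonzero $E^{\widehat{(i)}}$-displacement from $\tW^{(i)}_x(0)$ does \emph{not} push the backward orbit out of the tempered boxes. For $i<u$ we have $E^{\widehat{(i)}}\supset E^{u,i+1}$. Already in a linear model, a point displaced along $E^{u,i+1}$ converges backward to the orbit of $x$ at rate about $\lambda_{i+1}>0$ and stays in every box, yet it is not in $W^i(x)$. Domination gives only \emph{relative} growth. What it yields is a rate statement: the displacement decays strictly more slowly than the rate $\approx\lambda_i$ of points of $W^i(x)$. The exclusion has to be made through that rate gap, not through escape from boxes.

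Even the rate comparison needs the whole backward orbit of the genuine point $z$ to stay in the agreement charts, so that $\exp_{f^{-n}x}^{-1}f^{-n}z=\tg_x^{-n}\exp_x^{-1}z$. For a point of the intrinsic component of $W^i(x)$ this is not automatic: its backward convergence is only asymptotic, with a point-dependent constant. Controlling this is the content of the backward-orbit and recurrence arguments behind the cited lemmas (cf.\ \cite[Lemmas~2.5--2.6]{GTY}). Your last paragraph defers exactly this step back to \cite{Tong}, so the proposal does not supply the missing idea.

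Alternatively, once the fake disk is known to lie in $W^i(x)$, you could try an open--closed argument in the intrinsic topology via invariance of domain. That would require continuity of the inclusion into the leaf topology, which you do not address. Your strong reverse inclusion uses the correct $\lambda_i$-versus-$\lambda_{i-1}$ rate gap, but it invokes the first identity at scale $2\delta$, so it inherits this gap.

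\textbf{The first step.} Your claim that backward iterates of the fake plaque stay in $R_{f^{-n}x}(\rho_\delta(f^{-n}x))$ does not close. By \eqref{eq:strong-backward} the image length is at most $C_2C\delta e^{(-\lambda_i+3\eps)n}$. By \eqref{eq:tempered} you only know $\rho_\delta(f^{-n}x)\geqslant\delta A(x)^{-1}e^{-\eps n}$. You would therefore need $C_2CA(x)\leqslant e^{(\lambda_i-4\eps)n}$, which fails for small $n$ when $A(x)$ is large. Shrinking $\delta_*$ cannot help, since both sides are linear in $\delta$. Iterating one step at a time only multiplies the $A$-factors. Compare \eqref{eq:past-trapped}, where such a backward condition is imposed rather than derived.

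What you actually need is agreement of $\tg^{-n}$ with $f^{-n}$, that is, containment in the uniform boxes $R_{f^{-n}x}(r_0)$. This follows from $\delta_*\leqslant r_0/(2C_1C_2C)$. The same replacement is needed where you use the boxes $R_{f^{-n}x}(2\rho_\delta(f^{-n}x))$ in the strong identity. With this correction, both inclusions of fake plaques into genuine leaves go through.
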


  Fix such a \(\delta\). Write a leaf of \(\tW_x^{(i-1)}\) as the graph
\(\{u+\psi_s(u):u\in E_x^{(i-1)}\}\), where
\(\psi_s:E_x^{(i-1)}\to E_x^{\widehat{(i-1)}}\) has a Lipschitz
constant of at most \(1/4\). For points
\(p=u_p+\psi_s(u_p)\), \(q=u_q+\psi_s(u_q)\) in \(R_x(r)\), put
\(u_t=(1-t)u_p+tu_q\), \(0\leqslant t\leqslant1\). Then
\begin{equation}\label{eq:graph-segment}
 |u_t|\leqslant r,\qquad
 |\psi_s(u_t)|'\leqslant|\psi_s(u_p)|'+\tfrac14|u_t-u_p|
 \leqslant\tfrac32r.
\end{equation}
Thus \(t\mapsto u_t+\psi_s(u_t)\) joins \(p\) to \(q\) on that leaf
inside \(R_x(3r/2)\).

\subsection{Straightening maps}\label{subsec:straightening}

  Fix \(A_0<\infty\) with \(m\{A\leqslant A_0\}>0\), and choose
\(x_*\in\Gamma'\cap\{A\leqslant A_0\}\) to be an \(m\)-density
point of this set. In the exponential chart at \(x_*\), use
coordinates \((u,t,w)\) along the three summands of
\eqref{eq:main-splitting}, identified with
\[
 \R^{(i-1)}\times\R^{d_i}\times
 \R^{\dim E^{\widehat{(i)}}},\qquad
 \R^{(i-1)}=\R^{\dim E^{(i-1)}}.
\]
We also write \(u,t,w\) for the coordinate maps on the chart.

  The box construction of \cite[Section~2.3]{Tong}, following
\cite[Sections~8.3--8.4]{LY2}, gives concentric chart boxes
\(U\Subset U^+\) and a Borel set
\[
 G\subset\Gamma'\cap\{A\leqslant A_0\}\cap U,\qquad m(G)>0.
\]
The local \(W^i\)-disks \(\exp_zW^i_{z,\delta}(z)\), \(z\in G\),
are graphs over a common \((u,t)\)-box in \(U^+\). Their
intersections with \(U\) are connected and either coincide or
are disjoint. Set
\[
 S=\bigcup_{z\in G}
   \bigl(\exp_zW^i_{z,\delta}(z)\cap U\bigr)
   =\bigsqcup_L L,
\]
where \(L\) ranges over the distinct intersections. These are
the \(W^i\)-plaques of \(S\). For a disk representing \(L\), put
\[
 L^+=\exp_zW^i_{z,\delta}(z)\cap U^+,\qquad L=L^+\cap U.
\]
Here \(L^+\) is taken from the same local disk. Write \(L_x\) for
the plaque containing \(x\in S\). Since \(A\leqslant A_0\) on
\(G\), the boxes can be chosen so that
\(L_x^+\subset\exp_xW^i_{x,\delta}(x)\) for every \(x\in G\).

  The local \(W^{i-1}\)-plaques through points of \(L\), taken
in \(L^+\), are graphs over a common \(u\)-box containing \(0\).
These plaque families are obtained from the fake foliations
and agree with the genuine \(W^{i-1}\)-plaques at regular points
by \cref{lem:plaque-input}. Their graph slopes, and those of
\(L^+\), are uniformly small. Intersecting these
\(W^{i-1}\)-plaques with \(L\) gives its finer plaque partition.
The set \(S\) is Borel, and both plaque partitions are measurable
with standard Borel spaces of labels.

  Define \(T_L=L^+\cap\{u=0\}\). For \(y\in L\), let \(H_L(y)\)
be the unique point of its local \(W^{i-1}\)-plaque in \(T_L\),
obtained by setting \(u=0\) in the graph. Local leaf compatibility
makes this point independent of the disk representing \(L\).
The point \(H_L(y)\) may lie outside \(L\), but it remains in
\(L^+\), where the fixed chart is defined. The graph bounds make
\(t|_{T_L}\) uniformly bi-Lipschitz. Set
\begin{equation}\label{eq:straightening}
 \begin{split}
 \pi=(\pi^{(i-1)},\pi^i)&:
 S\longrightarrow\R^{(i-1)}\times\R^{d_i},\\
 \pi(y)&=\bigl(u(y),t(H_L(y))\bigr),\qquad y\in L.
 \end{split}
\end{equation}
On each \(L\), the fibers of \(\pi^i\) are its
\(W^{i-1}\)-plaques, and \(\pi^{(i-1)}\) gives the coordinate
along each fiber. The proof of \cite[Lemma~2.6]{Tong}, with
\(t\in\R^{d_i}\), and \cref{lem:holonomy-input} give the
following regularity.

\begin{proposition}[Straightening maps]\label[proposition]{prop:box-input}
The box \(S\) can be chosen arbitrarily small, with
\(m(S\cap\Gamma'\cap\{A\leqslant A_0\})>0\), so that \(\pi\)
is bounded and Borel. On each \(W^i\)-plaque \(L\),
\(\pi|_L\) is an injective \(\alpha\)-bi-H\"older map, and its
restriction to each \(W^{i-1}\)-plaque is bi-Lipschitz. These
estimates hold with a common constant \(C_5\geqslant1\),
independent of \(L\), in local Riemannian and Euclidean coordinates.
\end{proposition}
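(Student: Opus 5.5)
We prove \cref{prop:box-input} by separating the two coordinates of \(\pi|_L\): the strong coordinate \(u\) varies along the \(W^{i-1}\)-plaques, and the transverse coordinate \(t\circ H_L\) is a fake-leaf holonomy.

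\textbf{Choice of box.} We follow the box construction of \cite[Section~2.3]{Tong}. Take \(U\Subset U^+\) to be concentric coordinate boxes of side comparable to \(\rho_\delta(x_*)\), shrunk as far as desired. Since \(x_*\) is an \(m\)-density point of \(\Gamma'\cap\{A\leqslant A_0\}\), every such box still has \(m(S\cap\Gamma'\cap\{A\leqslant A_0\})>0\).

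\textbf{Boundedness and measurability.} Boundedness of \(\pi\) is immediate, since \(u\) and \(t\) are coordinates on the bounded chart \(U^+\). For Borel measurability:
\begin{itemize}[label=--,leftmargin=2em]
\item \(u\) is continuous.
\item By \cref{lem:fake-foliations}, the local \(W^i\)- and \(W^{i-1}\)-plaques are graphs depending continuously on the base point in the uniform-on-compacts topology.
\item The point \(H_L(y)\) is obtained by evaluating the \(W^{i-1}\)-graph through \(y\) at \(u=0\).
\item Hence \(y\mapsto H_L(y)\) is a composition of Borel maps on the Borel set \(S\), using the measurable labelling of the plaque partitions.
\end{itemize}

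\textbf{Lipschitz part.} Fix a \(W^i\)-plaque \(L\) and one \(W^{i-1}\)-plaque \(P\subset L\). On \(P\) the map \(H_L\) is constant, so \(\pi|_P=(u|_P,\text{const})\). Since \(P\) is a \(C^1\) graph over a \(u\)-box with uniformly small slope, \(u|_P\) is bi-Lipschitz for both the Riemannian and the intrinsic distance. The comparison of the two distances follows from \eqref{eq:graph-segment}, which provides a joining path of length comparable to \(|u_p-u_q|\). All constants depend only on the slope bound and the chart \(\exp_{x_*}\). This gives the bi-Lipschitz claim.

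\textbf{H\"older part.} For \(y,y'\in L\), split
\[
|\pi(y)-\pi(y')|\asymp|u(y)-u(y')|+|t(H_L y)-t(H_L y')|.
\]
Let \(y''\) be the point of the \(W^{i-1}\)-plaque of \(y'\) with \(u(y'')=u(y)\). The points \(y\) and \(y''\) both lie on the \(u=u(y)\) slice of \(L^+\), which is an \(i\)-transversal.

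\begin{enumerate}[label=\textup{(\arabic*)},leftmargin=2em]
\item Under the identification \eqref{eq:strong-plaque}, the map taking this slice to \(T_L\) is holonomy along the fake foliation \(\tW_{z}^{(i-1)}\), for a base point \(z\in G\) representing \(L\).
\item We have \(A(z)\leqslant A_0\). Both the transversal diameters and the holonomy displacement are bounded by a fixed multiple of \(\rho_\delta\).
\item \cref{lem:holonomy-input}, with \(K=A_0\) and \(R\) this bound, therefore gives the \(\alpha\) and \(1/\alpha\) H\"older estimates between \(|y-y''|\) and \(|H_L y-H_L y''|\).
\item Here \(H_L y''=H_L y'\), and \(t|_{T_L}\) is bi-Lipschitz.
\item Finally, \(|y'-y''|\lesssim|u(y)-u(y')|\) by the Lipschitz graph bound, so \(|y-y''|\lesssim|y-y'|\) up to uniform constants.
\end{enumerate}

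Combining these gives \(|\pi(y)-\pi(y')|\leqslant C|y-y'|^\alpha\) on the bounded set \(L\), using \(\alpha<1\) and boundedness to absorb the Lipschitz term. The reverse estimate \(|y-y'|\leqslant C|\pi(y)-\pi(y')|^\alpha\) follows symmetrically from the \(1/\alpha\) lower bound of \eqref{eq:holonomy}. Injectivity follows from \eqref{eq:projection-fibers}: equal \(t\circ H_L\) means the same \(W^{i-1}\)-plaque, and then equal \(u\) means the same point.

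\textbf{Main obstacle.} The hard step is the transfer from genuine plaques to the fake chart at a single base point, uniformly in \(L\). The plaques of \(S\) are centred at different \(z\in G\), while \cref{lem:holonomy-input} is stated in \(T_zM\). We handle this by working in the chart at \(z\), where \eqref{eq:positive-plaque} and \eqref{eq:strong-plaque} identify the plaques with fake leaves. We then pass to the \(x_*\)-chart through \(\exp_{x_*}^{-1}\circ\exp_z\). This map is uniformly \(C^1\)-close to an isometry on the small box and changes slopes and the \(u,t\) coordinates only by uniformly bi-Lipschitz amounts. Consequently \(C_5\) depends only on \(A_0,\delta,\theta,C_3\), and not on \(L\).
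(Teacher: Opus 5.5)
Your proposal is correct and follows essentially the same route as the paper. The paper gets this proposition by rerunning the box construction and regularity argument of Part~2 (\cite[Lemma~2.6]{Tong}) with \(t\in\R^{d_i}\) and feeding in \cref{lem:holonomy-input}: \(\pi|_L=(u,t\circ H_L)\) is bi-Lipschitz along the strong plaques because they are small-slope graphs over the \(u\)-box, and \(\alpha\)-bi-H\"older transversally because \(H_L\) is fake \((i-1)\)-holonomy at a base point with \(A\leqslant A_0\). One small correction: injectivity follows from your reverse H\"older estimate, or from the fact that distinct strong plaques meet \(T_L\) in distinct points. It does not follow from \eqref{eq:projection-fibers}, which concerns \(\Pi_x^i\) rather than \(H_L\).
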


  The map \(\pi\) need not distinguish different \(W^i\)-plaques.
Its transverse component \(\pi^i\) uses the fixed chart, whereas
\(\Pi_x^i\) takes values in \(\tW_x^i(0)\subset T_xM\).
Section~\ref{subsec:return-domains} compares these two descriptions
of the local \(W^{i-1}\)-classes on return domains.

\section{Partitions and transverse coordinates}\label{sec:partitions}

  We use Rokhlin measurable partitions and their conditional
probabilities with the standard conventions of \cite{Rokhlin,KH}.
All partitions and identities are considered modulo \(m\)-null
sets. For a partition \(\zeta\), write \(\zeta(x)\) for its atom,
\(\sigma(\zeta)\) for its completed sigma-algebra, and
\(m_x^\zeta\) for its conditional probability, so that
\(m_x^\zeta(\zeta(x))=1\) almost everywhere. The notation
\(\eta\prec\xi\) means that \(\xi\) refines \(\eta\), allowing
equality, and \(\eta\vee\xi\) is their common refinement.
For \(k\in\Z\),
\[
 (f^k\zeta)(x)=f^k\bigl(\zeta(f^{-k}x)\bigr).
\]
Invariance gives
\begin{equation}\label{eq:conditional-equivariance}
 m_{f^kx}^{f^k\zeta}=(f^k)_*m_x^\zeta
 \qquad(k\in\Z),\quad\text{for almost every }x.
\end{equation}
An increasing partition satisfies \(f\zeta\prec\zeta\).
Subordination to \(W^j\) means that, for almost every \(x\),
\(\zeta(x)\subset W^j(x)\) and \(\zeta(x)\) contains an intrinsic
neighborhood of \(x\). The partition is generating if
\(\bigvee_{n\geqslant0}f^{-n}\zeta\) is the partition into points. We write \(\B(Y)\) for the Borel
sigma-algebra of \(Y\). Finite conditional entropy
\(H_m(\mathcal U\mid\mathcal V)<\infty\) ensures that, on almost
every \(\mathcal V\)-atom, the intersections of positive conditional mass with the atoms
of \(\mathcal U\) form a countable partition modulo the conditional
probability.

  We construct nested subordinate partitions \(\xi_i\prec\xi_{i-1}\),
their adapted refinements \(\eta_i\prec\eta_{i-1}\), and a coordinate
\(\tp^i\) satisfying
\(\eta_{i-1}(x)=\{y\in\eta_i(x):\tp^i(y)=\tp^i(x)\}\).

  In each argument we use a common invariant Borel set
\(X\subset\Gamma'\) of full \(m\)-measure on which the stated
partition identities hold. The conditional probabilities for the countably many partitions
used in that argument assign full mass to \(X\) on almost every
atom. Assertions about every regular point of an atom, and all
atom diameters below, refer to its intersection with \(X\).

\subsection{Subordinate partitions and return domains}\label{subsec:return-domains}

  Inside \(S\), let \(\widehat\xi_i\) have the plaques \(L\) as
its atoms, and let \(\widehat\xi_{i-1}\) refine each \(L\) into
the fibers of \(\pi^i|_L\), namely its local \(W^{i-1}\)-plaques.
Complete both partitions by the atom \(M\setminus S\), and define
\begin{equation}\label{eq:subordinate}
 \xi_j=\bigvee_{n\geqslant0}f^n\widehat\xi_j,\qquad j=i-1,i.
\end{equation}
The recurrence construction in \cite[Section~3]{Tong}, following
\cite[Section~3.1]{GTY}, gives the following choice of the box.

\begin{lemma}[Subordinate partitions]\label[lemma]{lem:subordinate-input}
The box in \cref{prop:box-input} can be chosen arbitrarily small so
that \(\xi_{i-1}\) and \(\xi_i\) are increasing, generating, and
subordinate to \(W^{i-1}\) and \(W^i\), respectively. Moreover,
\(\xi_i\prec\xi_{i-1}\).
\end{lemma}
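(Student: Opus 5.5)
The plan follows the recurrence construction of \cite[Section~3.1]{GTY} and \cite[Section~3]{Tong}, adapted to the nested plaque families. There are four parts: the increasing property and the nesting are formal, subordination is the substantive point, and generation comes from backward contraction.

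\emph{Increasing property.} This is immediate from \eqref{eq:subordinate}:
\[
 f\xi_j=\bigvee_{n\geqslant1}f^n\widehat\xi_j\prec\xi_j .
\]

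\emph{Nesting.} By construction, \(\widehat\xi_i\prec\widehat\xi_{i-1}\): inside \(S\), each plaque \(L\) is cut into the fibers of \(\pi^i|_L\), and both partitions share the atom \(M\setminus S\). Taking \(f^n\)-images and joins preserves refinement, so \(\xi_i\prec\xi_{i-1}\).

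\emph{Subordination (main step).} Fix \(x\) and apply backward recurrence to \(S\). The atom \(\xi_j(x)\) is the intersection over \(n\geqslant0\) of the sets \(f^n(\widehat\xi_j(f^{-n}x))\).

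First, the atom lies in the leaf. Choose \(n\geqslant0\) with \(f^{-n}x\in S\); such \(n\) exists for almost every \(x\) by Poincaré recurrence and ergodicity, since \(m(S)>0\). Then \(f^{-n}x\) lies in a \(W^j\)-plaque of \(S\), so \(f^n(\widehat\xi_j(f^{-n}x))\) lies in \(W^j(x)\). For \(j=i-1\) this uses that the fibers of \(\pi^i\) are local \(W^{i-1}\)-plaques, by \cref{lem:plaque-input}.

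Second, the atom contains an intrinsic neighborhood of \(x\). For each \(n\) with \(f^{-n}x\in S\), I need the intrinsic ball \(B^j(f^{-n}x,\,r_n)\) to lie in the plaque. By the backward length bound \eqref{eq:strong-backward}, \(f^{-n}\) contracts \(W^j\)-leaves by at least \(C_2A(x)e^{(-\lambda_j+3\eps)n}\). It therefore suffices that
\[
 \sum_n \mathbf 1_S(f^{-n}x)\,\mathbf 1\bigl\{d^j(f^{-n}x,\partial\text{plaque})<C\,e^{-(\lambda_j-4\eps)n}\bigr\}<\infty .
\]
This is where the box has to be chosen well, and it is where I expect the main difficulty. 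Following the \cite{LY2} trick, I would take concentric boxes \(U_\rho\) with \(\rho\) drawn from an interval. Measurability of \(\rho\mapsto m(\partial U_\rho\text{-neighborhoods})\) and a Fubini/Borel--Cantelli argument then give, for Lebesgue-almost every \(\rho\),
\[
 \sum_n m\bigl\{y:\ d(y,\partial U_\rho)<e^{-cn}\bigr\}<\infty .
\]
Here \(d\) is the ambient distance. Leaf distance to the plaque boundary is comparable to ambient distance to \(\partial U_\rho\), thanks to the uniform graph slopes and the bi-Lipschitz plaque geometry of \cref{prop:box-input}. This comparison is needed for both \(j=i-1\) and \(j=i\) simultaneously. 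For \(W^{i-1}\)-plaques the boundary consists of the boundary of \(U\) together with the fiber structure inside \(L^+\); since fibers are full graphs over a common \(u\)-box, only the \(u\)-boundary matters, so the same shrinking choice works. With this choice only finitely many \(n\) are bad, giving a positive radius \(r(x)\).

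\emph{Generation.} Points \(y\neq z\) in one \(\xi_j\)-atom lie on a common \(W^j\)-leaf. Their backward iterates contract in leaf distance by \eqref{eq:strong-backward}, whereas for infinitely many \(n\) the point \(f^{-n}x\) returns to \(G\), where plaques have uniform size. So \(f^{-n}\) of the atom has diameter tending to zero along these returns. The usual argument of \cite{LY1} (see \cite[Lemma~3.2]{GTY}) then shows that \(\bigvee_{n\geqslant0}f^{-n}\xi_j\) separates points. Finally, shrinking the box as needed keeps \(m(S\cap\Gamma'\cap\{A\leqslant A_0\})>0\), consistent with \cref{prop:box-input}.
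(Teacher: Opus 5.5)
Your approach is the one the paper relies on. The paper gives no argument of its own here and invokes the Ledrappier--Young recurrence construction of Parts~1--2: formal increasing and nesting properties, Poincar\'e recurrence for \(\xi_j(x)\subset W^j(x)\), a Borel--Cantelli choice of box size against backward contraction for the neighborhood property, and recurrence plus contraction for generation. Your increasing property, the nesting \(\xi_i\prec\xi_{i-1}\), and the containment \(\xi_j(x)\subset W^j(x)\) are fine.

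\textbf{Gap in the neighborhood property.} Containing \(B^j(x,r)\) in \(\xi_j(x)\) means \(f^{-n}B^j(x,r)\subset\widehat\xi_j(f^{-n}x)\) for every \(n\geqslant0\). When \(f^{-n}x\notin S\) this atom is \(M\setminus S\), so you must also show that \(f^{-n}B^j(x,r)\) does not meet \(S\). Your sum carries the factor \(\mathbf 1_S(f^{-n}x)\) and ignores these times, so the claimed sufficiency is false as stated.

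The repair uses what you already have: your Borel--Cantelli estimate controls \(d(f^{-n}x,\partial U_\rho)\) at all large \(n\), not only at returns. Suppose \(w\in f^{-n}B^j(x,r)\cap S\), say \(w\in L'\).
\begin{enumerate}[label=\textup{(\roman*)},leftmargin=2em]
\item Since \(w\in U\Subset U^+\) and \(L'^+\) is a graph over a common box in \(U^+\), the disk \(L'^+\) contains the intrinsic \(W^i\)-ball of a uniform radius about \(w\).
\item The point \(f^{-n}x\) lies on \(W^i(w)\) at small intrinsic distance from \(w\), by backward contraction, so \(f^{-n}x\in L'^+\).
\item If \(f^{-n}x\in U\), then \(f^{-n}x\in L'\subset S\), contradicting \(f^{-n}x\notin S\).
\item Otherwise \(d(f^{-n}x,\partial U_\rho)\leqslant d(f^{-n}x,w)\), which is below the contracted radius. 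Your estimate excludes this for large \(n\).
\end{enumerate}
You also need \(m(\partial U_\rho)=0\), true for almost every \(\rho\), so that the finitely many exceptional times still allow a positive radius.

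\textbf{Generation.} The recurrence you need is forward. The atom of \(f^{-n}\xi_j\) at \(x\) is \(f^{-n}(\xi_j(f^nx))\). Whenever \(f^nx\in S\), this lies in \(f^{-n}\) of a plaque of \(S\). Backward contraction along \(W^j\) then shrinks it, with uniform constants because the plaques come from points of \(G\), where \(A\leqslant A_0\).

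What you actually bound, \(f^{-n}(\xi_j(x))\) along backward returns, is the atom of \(f^{-n}\xi_j\) at \(f^{-n}x\), not at \(x\). This can be turned into separation of points, but only with further steps. You must first check the decay for all \(n\), not just along returns. Then use that these atoms decrease in \(n\) and that \(m\) is invariant. As written, that step is missing.
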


  Fix a joint choice in \cref{prop:box-input,lem:subordinate-input} with
\(\diam S<\delta/A_0\). Fix the chart boxes, the plaque families,
the map \(\pi\), and the subordinate partitions for this choice.
By \eqref{eq:partial-entropy},
\[
 H_m(f^{-1}\xi_j\mid\xi_j)=h_j,\qquad j=i-1,i.
\]

  We now express \(\pi^i(y)\) in terms of the strong representative
\(\Pi_x^i(\exp_x^{-1}y)\) on uniform return domains.
Use the inner-box choice in \cite[Section~5.1]{GTY} and
\cite[Sections~2.3 and~5.2.1]{Tong} to choose a positive-measure
Borel set
\begin{equation}\label{eq:return-set}
 E\subset G\subset S\cap\Gamma'\cap\{A\leqslant A_0\}
\end{equation}
and a radius \(0<r_*\leqslant\delta/(4C_4A_0)\). The choice
of the outer box in Section~\ref{subsec:straightening} and
\cref{lem:plaque-input} give
\(\exp_x^{-1}L_x^+\subset\tW_x^{(i)}(0)\) for \(x\in E\). Put
\begin{equation}\label{eq:representative-coordinate-domains}
 \begin{split}
 U_x^i&=E\cap L_x\cap
       \exp_x\bigl(\tW_x^{(i)}(0)\cap R_x(r_*)\bigr),\\
 V_x^i&=\Pi_x^i(\exp_x^{-1}U_x^i)\subset\tW_x^i(0).
 \end{split}
\end{equation}
The graph and local plaque properties make
\(T_x=\exp_x^{-1}(T_{L_x})\) an \(i\)-transversal in
\(\tW_x^{(i)}(0)\). Each local \(W^{i-1}\)-plaque meeting
\(U_x^i\) reaches \(T_{L_x}\) within this chart. For
\(v\in V_x^i\), define
\[
 \{h_x(v)\}=\tW_x^{(i-1)}(v)\cap T_x,\qquad
 \kappa_x^i(v)=t\bigl(\exp_x h_x(v)\bigr).
\]
Here \(t\) is the same fixed coordinate used in
\eqref{eq:straightening}.

\begin{lemma}[Coordinates on strong representatives]\label[lemma]{lem:representative-coordinates}
For every \(x\in E\), the map \(\kappa_x^i\) is a bijection from
\(V_x^i\) onto \(\pi^i(U_x^i)\), with
\begin{equation}\label{eq:coordinate-factorization}
 \pi^i(y)=\kappa_x^i\bigl(\Pi_x^i(\exp_x^{-1}y)\bigr),
 \qquad y\in U_x^i.
\end{equation}
Moreover, \(\kappa_x^i(0)=\pi^i(x)\), and there is a constant
\(C_6\geqslant1\), independent of \(x\), such that
\begin{equation}\label{eq:representative-coordinate-holder}
 C_6^{-1}|v-w|^{1/\alpha}
 \leqslant |\kappa_x^i(v)-\kappa_x^i(w)|
 \leqslant C_6|v-w|^\alpha,\qquad v,w\in V_x^i.
\end{equation}
\end{lemma}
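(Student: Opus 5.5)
The proof compares the two descriptions of the local \(W^{i-1}\)-classes inside \(L_x\). One is the chart coordinate \(\pi^i\), obtained by sliding along the genuine \(W^{i-1}\)-plaque to \(T_{L_x}\). The other is the fake-foliation projection \(\Pi_x^i\) onto the standard fake \(i\)-leaf. The plan has three steps: identify the plaques with fake leaves, prove the factorization and bijectivity, and read off the H\"older bounds from \cref{lem:holonomy-input}.

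\emph{Identification step.} Fix \(x\in E\) and \(y\in U_x^i\), and put \(v=\exp_x^{-1}y\in\tW_x^{(i)}(0)\cap R_x(r_*)\). Since \(y\) is regular and lies in \(\exp_xW^i_{x,\delta}(x)\), \eqref{eq:strong-plaque} gives
\[
 W^{i-1}_{x,2\delta}(y)=\tW_x^{(i-1)}(v)\cap R_x(2\rho_\delta(x)).
\]
The local \(W^{i-1}\)-plaque of \(y\) in \(L_x^+\) is then contained in \(\exp_x\) of the fake strong leaf \(\tW_x^{(i-1)}(v)\). This uses \(\diam S<\delta/A_0\) and \(A(x)\leqslant A_0\), so \(L_x^+\) sits inside \(R_x(\rho_\delta(x))\), and it uses the connectedness of the graph segment \eqref{eq:graph-segment} to stay in the correct component.

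\emph{Factorization and bijectivity.} By definition \(H_{L_x}(y)\) is the point of that plaque with \(u=0\). It therefore equals \(\exp_x\) of the intersection point \(\tW_x^{(i-1)}(v)\cap T_x\). By \eqref{eq:projection-fibers}, \(\tW_x^{(i-1)}(v)=\tW_x^{(i-1)}(\Pi_x^i(v))\), so this point is \(h_x(\Pi_x^i v)\). Hence
\[
 \pi^i(y)=t(H_{L_x}(y))=\kappa_x^i(\Pi_x^i v),
\]
which is \eqref{eq:coordinate-factorization}.

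Surjectivity onto \(\pi^i(U_x^i)\) follows from the factorization, since \(V_x^i=\Pi_x^i(\exp_x^{-1}U_x^i)\). For injectivity, suppose \(\kappa_x^i(v)=\kappa_x^i(w)\). Because \(t|_{T_{L_x}}\) is bi-Lipschitz, \(h_x(v)=h_x(w)\). Then \(v\) and \(w\) lie on the same fake \((i-1)\)-leaf, and both lie in \(\tW_x^i(0)\), so \(v=w\) by uniqueness of the intersection in \eqref{eq:strong-projection}. Finally \(\Pi_x^i(0)=0\) and \(x\in U_x^i\), so \(\kappa_x^i(0)=\pi^i(x)\).

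\emph{H\"older bounds.} Observe that \(h_x\) is the holonomy along \(\tW_x^{(i-1)}\) between two \(i\)-transversals. The first is the piece of \(\tW_x^i(0)\) containing \(V_x^i\); the second is \(T_x\). I would apply \cref{lem:holonomy-input} with \(K=A_0\) and a uniform \(R\).
\begin{itemize}
\item \emph{Diameter.} \(V_x^i\subset R_x(C_4r_*)\) by \eqref{eq:projection-bound}, so its diameter is uniformly bounded.
\item \emph{Displacement.} For \(v\in V_x^i\), both \(v\) and \(h_x(v)\) lie in \(\exp_x^{-1}L_x^+\), which has uniformly bounded size. So \(|v-h_x(v)|\) is uniformly bounded.
\item \emph{Slope.} \(T_x\) has slope at most \(\theta\) because it is the \(\exp_x^{-1}\)-image of a uniformly flat graph in the fixed chart at \(x_*\).
\end{itemize}
This gives \(\alpha\)-bi-H\"older bounds for \(h_x\). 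Composing with the bi-Lipschitz map \(t|_{T_{L_x}}\) and the uniformly bi-Lipschitz \(\exp_x\) yields \eqref{eq:representative-coordinate-holder}, with \(C_6\) depending only on \(C_3(A_0,R)\) and the chart constants.

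The main obstacle is the identification step. One must check that the genuine local \(W^{i-1}\)-plaque in \(L_x^+\), read in the chart at \(x\), lies entirely in the single fake leaf \(\tW_x^{(i-1)}(v)\) within the region where \eqref{eq:strong-plaque} applies. One must also check that \(T_x\) qualifies as an \(i\)-transversal when transported from the \(x_*\)-chart to the \(x\)-chart. I would handle both using the smallness of \(r_*\leqslant\delta/(4C_4A_0)\) and of \(\diam S\) relative to \(\rho_\delta(x)\geqslant\delta/A_0\), together with the slope bounds of \cref{lem:fake-foliations}.
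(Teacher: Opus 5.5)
Your proposal is correct and follows the paper's own proof: the plaque identification of \cref{lem:plaque-input} gives \(h_x(\Pi_x^i(\exp_x^{-1}y))=\exp_x^{-1}H_{L_x}(y)\) on \(U_x^i\), injectivity comes from that of \(h_x\) and \(t|_{T_{L_x}}\), and the H\"older bounds come from \cref{lem:holonomy-input} with \(K=A_0\), composed with the bi-Lipschitz map \(t\circ\exp_x\) on \(T_x\). Two justifications should be adjusted. First, the containment of \(L_x^+\) in the \(x\)-chart is the outer-box choice \(L_x^+\subset\exp_xW^i_{x,\delta}(x)\) for \(x\in G\), not a consequence of \(\diam S<\delta/A_0\); second, your displacement bound relies on \(v\in\exp_x^{-1}L_x^+\), which you have not justified, but this is not needed, since \(|v|'\leqslant C_4r_*\) and \(T_x\subset R_x(\rho_\delta(x))\) already bound \(|v-h_x(v)|\) uniformly.
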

\begin{proof}
The plaque identifications in \cref{lem:plaque-input} give
\[
 h_x\bigl(\Pi_x^i(\exp_x^{-1}y)\bigr)
 =\exp_x^{-1}H_{L_x}(y),\qquad y\in U_x^i.
\]
The definition of \(\pi^i\) gives \eqref{eq:coordinate-factorization}
and the asserted image. Injectivity follows from that of
\(h_x\) and \(t|_{T_{L_x}}\). Taking \(y=x\) gives
\(\kappa_x^i(0)=\pi^i(x)\).

  Each \(h_x\) is the restriction of a holonomy between
\(i\)-transversals. The inner-box choice bounds their domain
diameters and the displacements \(|v-h_x(v)|\) uniformly in
\(x\in E\). Since \(A(x)\leqslant A_0\),
\cref{lem:holonomy-input} makes \(h_x\) uniformly
\(\alpha\)-bi-H\"older. The map \(t\circ\exp_x\) is uniformly
bi-Lipschitz on \(T_x\), which proves
\eqref{eq:representative-coordinate-holder}.
\end{proof}

\subsection{Adapted partitions and transverse metrics}\label{subsec:adapted}

\subsubsection{Adapted partitions and the local strong quotient}
For a countable finite-entropy partition \(\mathcal P\), write
\(\mathcal P^+=\bigvee_{n\geqslant0}f^n\mathcal P\). This is the past-name
partition: \(y\in\mathcal P^+(x)\) means
\(\mathcal P(f^{-n}y)=\mathcal P(f^{-n}x)\) for every \(n\geqslant0\).
Its adapted refinements are
\begin{equation}\label{eq:positive-partitions}
 \eta_j=\xi_j\vee\mathcal P^+,\qquad
 \tm_x^j=m_x^{\eta_j},\qquad j=i-1,i.
\end{equation}
The additional past-name restriction in \(\eta_j(x)\) may remove
every leaf neighborhood of \(x\). Thus \(\tm_x^j=m_x^{\eta_j}\) is not
the conditional probability \(m_x^j=m_x^{\xi_j}\) used to define
\(\ldel_j,\udel_j\); their comparison is made in
Section~\ref{subsec:plaque-dimensions}.

  The partition constructions of Gan, Tong, and Yang
\cite[Lemma~2.8 and Lemmas~3.3--3.5]{GTY} and Tong
\cite[Lemmas~5.5--5.7]{Tong} give the following choice.

\Needspace{5\baselineskip}
\begin{proposition}[Adapted partitions and local quotient]\label[proposition]{prop:adapted-input}
There is \(\delta_{a,*}>0\) such that for every
\(0<\delta_a<\min\{\delta/4,\delta_{a,*}\}\) one can choose a countable
partition \(\mathcal P\) with \(H_m(\mathcal P)<\infty\), refining
\(\{S,M\setminus S\}\), \(\{E,M\setminus E\}\), and any prescribed
finite measurable partition, for which
\begin{equation}\label{eq:adapted-partition}
 \mathcal P^+(x)\subset\exp_xR_x(\delta_aA(x)^{-1})
\end{equation}
almost everywhere. The partitions in \eqref{eq:positive-partitions}
satisfy
\begin{align}
 f\eta_j&\prec\eta_j,\qquad \eta_i\prec\eta_{i-1},
 &H_m(\eta_j\mid f\eta_j)&=h_j,\label{eq:adapted-entropies}\\
 \eta_i(x)&\subset\exp_xW^i_{x,\delta}(x),
 &\eta_{i-1}(y)&=\eta_i(x)\cap\exp_xW^{i-1}_{x,2\delta}(y),
 \label{eq:local-quotient}\\
 f^{-1}\eta_{i-1}&=\eta_{i-1}\vee f^{-1}\eta_i.
 &&\label{eq:geometric-quotient}
\end{align}
The second identity in \eqref{eq:local-quotient} holds for almost every
\(x\) and every regular \(y\in\eta_i(x)\). All partition identities
are modulo null sets.
\end{proposition}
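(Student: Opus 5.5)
The plan follows the construction of Gan--Tong--Yang \cite[Lemma~2.8, Lemmas~3.3--3.5]{GTY} and Tong \cite[Lemmas~5.5--5.7]{Tong}, with one point checked: the transverse dimension \(d_i\) is arbitrary. The proof has three parts: building \(\mathcal P\), checking the entropy identities, and checking the local-quotient identities.

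First I choose \(\mathcal P\). By Ma\~n\'e's lemma, there is a countable partition with finite entropy whose atom containing \(y\) has diameter at most \(\rho(y)\), provided \(\log\rho\) is integrable. Apply this with \(\rho(y)=c\,\delta_aA(y)^{-1}\) for a small uniform \(c>0\), depending on \(C_1\) and on the local Lipschitz constants of \(f^{-1}\). Integrability holds because \(A\) is tempered by \eqref{eq:tempered}, so \(\log A\) may be replaced by a measurable tempered function with \(\int\log A<\infty\). This is the standard step in \cite{GTY}. Then refine by \(\{S,M\setminus S\}\), \(\{E,M\setminus E\}\) and the prescribed finite partition; finite refinements keep the entropy finite. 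Next, if \(y\in\mathcal P^+(x)\), then \(f^{-n}y\) stays in the atom of \(f^{-n}x\) for all \(n\ge0\). By the fake-foliation estimates, this backward closeness forces \(\exp_x^{-1}y\) to lie in the box \(R_x(\delta_aA(x)^{-1})\). Here I would use the cone and graph-transform argument in the exponential charts of \cref{lem:fake-foliations} together with \eqref{eq:tempered}. In fact the trivial inclusion at \(n=0\) already gives \eqref{eq:adapted-partition}, once \(c\) is small relative to \(C_1\).

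Second, the entropy identities. Since \(f\mathcal P^+\prec\mathcal P^+\) and \(f\xi_j\prec\xi_j\), we get \(f\eta_j\prec\eta_j\). Also \(\xi_i\prec\xi_{i-1}\) gives \(\eta_i\prec\eta_{i-1}\). For \(H_m(\eta_j\mid f\eta_j)=h_j\), I follow the Ledrappier--Young argument \cite{LY1,LY2}. The partition \(\eta_j\) is still increasing and generating. It may fail to be subordinate, since an atom need not contain a leaf neighbourhood of \(x\). However, \(\eta_j\) is squeezed between \(\xi_j\) and the refinement \(\xi_j\vee\mathcal P^+\), and \(H_m(\mathcal P)<\infty\). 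Hence the Abramov--Rokhlin type identity \(H(\eta_j\mid f\eta_j)=H(\xi_j\mid f\xi_j)\) follows from the independence of partial entropy from the increasing generating partition \cite[Section~3]{Tong}. To check the hypotheses, I would use the standard computation
\[
H(\eta_j\mid f\eta_j)=\lim_n\tfrac1n H(\eta_j\mid f^n\eta_j).
\]
The contribution of \(\mathcal P^+\) is controlled by \(H(\mathcal P\mid\cdot)\) terms, which are \(o(n)\).

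Third, the local quotient, which I expect to be the main obstacle. Because \(\xi_i\) is refined from plaques inside \(\exp_xW^i_{x,\delta}(x)\) and \(\mathcal P^+(x)\subset\exp_xR_x(\delta_aA(x)^{-1})\) with \(\delta_a<\delta/4\), we get \(\eta_i(x)\subset\xi_i(x)\cap\exp_xR_x(\rho_\delta(x))\). By \eqref{eq:positive-plaque}, this set lies in \(\exp_xW^i_{x,\delta}(x)\). For the second identity in \eqref{eq:local-quotient}, I would show two inclusions.

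\(\supseteq\): regular points of \(\eta_i(x)\) on the plaque \(W^{i-1}_{x,2\delta}(y)\) are connected to \(y\) inside \(R_x(3r/2)\) by the segment \eqref{eq:graph-segment}. They therefore lie in the same \(\xi_{i-1}\)-atom, because backward iterates contract \(W^{i-1}\) by \eqref{eq:strong-backward}. So the plaques stay inside the single atoms of \(\widehat\xi_{i-1}\) chosen in \cref{lem:subordinate-input}.

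\(\subseteq\): this follows from subordination and \eqref{eq:strong-plaque}.

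Finally, \eqref{eq:geometric-quotient} says that \(f^{-1}\eta_{i-1}\) is obtained from \(f^{-1}\eta_i\) by cutting along \(\eta_{i-1}\). I get it by applying the local quotient description at \(x\) and at \(f^{-1}x\), and using the invariance \(\tg_x\tW^{(i-1)}_x=\tW^{(i-1)}_{fx}\). The delicate point is ensuring that the \(2\delta\)-plaques are compatible under one iterate. This uses the chart choice \(R_x(2\rho_\delta(x))\) in \cref{lem:plaque-input} and \(A(fx)\le e^\eps A(x)\).
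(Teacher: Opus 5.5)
The paper proves this proposition by citing \cite[Lemma~2.8 and Lemmas~3.3--3.5]{GTY}, \cite[Lemmas~5.5--5.7]{Tong}, and \cite[Lemma~3.2.1]{LY1}, and your outline follows that route. However, the step you flag as the main obstacle is not actually proved.

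You pass from \(\eta_i(x)\subset\xi_i(x)\cap\exp_xR_x(\rho_\delta(x))\) to \(\eta_i(x)\subset\exp_xW^i_{x,\delta}(x)\) via \eqref{eq:positive-plaque}. You get \(\eta_{i-1}(y)\subset\exp_xW^{i-1}_{x,2\delta}(y)\) from subordination and \eqref{eq:strong-plaque}. Neither step is justified:
\begin{itemize}
\item The atoms of \(\widehat\xi_i\) are only the plaques of \(S\) and \(M\setminus S\). For a general \(x\), the atom \(\xi_i(x)\) is cut out of the image \(f^n(L_{f^{-n}x})\) of an \(S\)-plaque at the last return time \(n\). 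That image can be long and can meet \(\exp_xR_x(\rho_\delta(x))\) in several components.
\item Subordination only places \(\xi_i(x)\) and \(\xi_{i-1}(y)\) in the global leaves \(W^i(x)\) and \(W^{i-1}(y)\). By contrast, \eqref{eq:positive-plaque} and \eqref{eq:strong-plaque} describe only the component through \(x\), respectively \(y\).
\item A strong leaf can re-enter the local \(W^i\)-disk along a different strong plaque. So the inclusion \(\subseteq\) in \eqref{eq:local-quotient} is exactly where work is needed.
\end{itemize}

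The missing idea is that a point of \(\mathcal P^+(x)\) in the same atom of \(\xi_i\) (resp.\ \(\xi_{i-1}\)) lies on the same local fake leaf. The argument runs as follows.
\begin{enumerate}
\item Applying \eqref{eq:adapted-partition} at every \(f^{-k}x\) traps the backward orbit in the agreement boxes, so \(f\) and \(\tg\) coincide along it.
\item At the last return \(n\) to \(S\), the points \(f^{-n}z\) and \(f^{-n}x\) (resp.\ \(f^{-n}y\)) lie in one plaque (resp.\ one strong plaque) of \(S\).
\item By the plaque identification at \(f^{-n}x\), they therefore lie on one leaf of \(\tW^{(i)}_{f^{-n}x}\) (resp.\ \(\tW^{(i-1)}_{f^{-n}x}\)).
\item Invariance of the fake foliations carries this to time \(0\), where \eqref{eq:positive-plaque} and \eqref{eq:strong-plaque} apply.
\end{enumerate}
This is what the cited lemmas supply; compare \eqref{eq:two-block-local-class} and \cite[Lemma~2.6(B2)]{GTY}. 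Once \eqref{eq:local-quotient} is proved this way, your one-step derivation of \eqref{eq:geometric-quotient} is correct.

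Two further justifications are wrong as written.

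First, temperedness \eqref{eq:tempered} does not imply \(\int\log A\,dm<\infty\). Replacing \(A\) cannot help unless the new function dominates \(A\), since Ma\~n\'e's lemma must be applied to some \(\rho\leqslant c\delta_aA^{-1}\). One fix is the following. On \(G'=\{A\leqslant A_1\}\) take \(\rho=c\delta_aA_1^{-1}e^{-(\eps+\log L)\tau}\), and take \(\rho=1\) off \(G'\). Here \(\tau\) is the first return time to \(G'\), which is integrable there by Kac, and \(L=\max\{1,\operatorname{Lip}f\}\). Let \(q\) be the last past visit to \(G'\). Then \(\mathcal P^+(x)\subset f^q(\mathcal P(f^{-q}x))\) gives \eqref{eq:adapted-partition}, because \(q<\tau(f^{-q}x)\) and \(A(x)\leqslant e^{\eps q}A_1\).

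Second, the independence result of \cite[Section~3]{Tong} is for subordinate partitions and does not apply to \(\eta_j\). Also, ``squeezed between \(\xi_j\) and \(\xi_j\vee\mathcal P^+\)'' is vacuous, since \(\eta_j=\xi_j\vee\mathcal P^+\). The identity \(H_m(\eta_j\mid f\eta_j)=h_j\) needs the argument of \cite[Lemma~3.2.1]{LY1}, where the generating property gives \(H_m(\mathcal P\mid f^{-n}\xi_j)\to0\). Your \(o(n)\) remark does not address the lower bound, and conditioning on the finer \(\eta_j\) could a priori lower the entropy.
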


  The threshold \(\delta_{a,*}\) depends only on the fixed geometry,
\(\delta\), and \(S\). The identity
\(H_m(f^{-1}\eta_j\mid\eta_j)=h_j\) follows from the increasing
and generating properties of \(\xi_j\) and the finiteness of
\(H_m(\mathcal P)\), as in
\cite[Lemma~3.2.1]{LY1}.

  To place the strong representatives inside the coordinate domains at
returns, choose
\[
 0<\delta_a<\min\{\delta/(4C_{4}),\delta_{a,*},r_*/(2C_{4})\}.
\]
The projection bound gives, for
\(v\in\tW_x^{(i)}(0)\cap R_x(\delta_aA(x)^{-1})\),
\begin{equation}\label{eq:projection-domains}
 |\Pi_x^i(v)|'<\tfrac14\rho_\delta(x)\quad(x\in\Gamma'),\qquad
 |\Pi_x^i(v)|'<\tfrac12r_*\quad(x\in E).
\end{equation}
Fix \(\mathcal P\) given by \cref{prop:adapted-input} at this scale.
For regular \(x\) and \(y\in\eta_i(x)\), put \(p=\exp_x^{-1}y\).
Both \(p\) and \(\Pi_x^i(p)\) belong to
\(R_x(\rho_\delta(x)/4)\). The path in \eqref{eq:graph-segment} joins
them on \(\tW_x^{(i-1)}(p)\) inside
\(R_x(3\rho_\delta(x)/8)\subset R_x(2\rho_\delta(x))\).
Hence \eqref{eq:strong-plaque} gives
\begin{equation}\label{eq:representative-strong-class}
 \Pi_x^i(\exp_x^{-1}y)\in W^{i-1}_{x,2\delta}(y),
 \qquad y\in\eta_i(x).
\end{equation}
Combining this with \eqref{eq:projection-fibers} and
\eqref{eq:local-quotient}, we see that the representatives distinguish
exactly the local strong classes:
\begin{equation}\label{eq:representative-quotient}
 \Pi_x^i(\exp_x^{-1}y)=\Pi_x^i(\exp_x^{-1}z)
 \quad\Longleftrightarrow\quad
 \eta_{i-1}(y)=\eta_{i-1}(z),\qquad y,z\in\eta_i(x).
\end{equation}
Thus a strong class means an \(\eta_{i-1}\)-atom inside the given
\(\eta_i\)-atom.
For \(x\in E\), the refinements by \(\{E,M\setminus E\}\) and
\(\widehat\xi_i\), together with \(\delta_a<r_*\), give
\[
 \eta_i(x)\subset E\cap L_x\cap
 \exp_x\bigl(\tW_x^{(i)}(0)\cap R_x(r_*)\bigr)=U_x^i.
\]
Consequently \(\Pi_x^i(\exp_x^{-1}\eta_i(x))\subset V_x^i\), and
\begin{equation}\label{eq:atom-coordinate-factorization}
 \pi^i(y)=\kappa_x^i\bigl(\Pi_x^i(\exp_x^{-1}y)\bigr),
 \qquad y\in\eta_i(x).
\end{equation}
The vector \(\Pi_x^i(\exp_x^{-1}y)\) need not belong to
\(\exp_x^{-1}\eta_i(x)\); only membership in \(V_x^i\) is used.

  The quotient identity and conditional entropy monotonicity imply
\[
 h_{i-1}=H_m(f^{-1}\eta_i\mid\eta_{i-1})
 \leqslant H_m(f^{-1}\eta_i\mid\eta_i)=h_i.
\]
The choices are made in the order
\[
 (\eps;A,\theta,r_0)\longrightarrow\delta
 \longrightarrow(A_0;S,\pi,\xi_{i-1},\xi_i)
 \longrightarrow(E,r_*)\longrightarrow(\delta_a,\mathcal P).
\]

\subsubsection{Return coordinates and the transverse metric}
On \(X_E=\bigcup_{n\geqslant0}f^nE\), define the last past return
time \(\ell_0:X_E\to\Z_{\leqslant0}\) and extend
\(\pi^i|_E\) by setting
\begin{equation}\label{eq:return-coordinate}
 \ell_0(x)=\max\{\ell\leqslant0:f^\ell x\in E\},\qquad
 \tp^i(x)=\pi^i(f^{\ell_0(x)}x).
\end{equation}
Ergodicity gives \(m(X_E)=1\). For \(q\geqslant0\),
\[
 \{\ell_0=-q\}=f^qE\setminus\bigcup_{j=0}^{q-1}f^jE.
\]
The union is empty for \(q=0\). Because \(\mathcal P\) records
membership in \(E\), each set on the right belongs to
\(\sigma(\mathcal P^+)\). Hence \(\ell_0\) is constant on every
\(\eta_i\)-atom in \(X_E\). Extend \(\tp^i\) by zero outside
\(X_E\), and use the invariant full-measure set
\(\bigcap_{k\in\Z}f^kX_E\) when following orbits.

  Iterating \eqref{eq:geometric-quotient} gives, for \(q\geqslant0\),
\[
 f^{-q}\eta_{i-1}=\eta_{i-1}\vee f^{-q}\eta_i,\qquad
 \eta_{i-1}=f^q\eta_{i-1}\vee\eta_i.
\]
Fix \(x\) with \(\ell_0(x)=-q\) and \(y\in\eta_i(x)\).
The increasing property gives
\(f^{-q}y\in\eta_i(f^{-q}x)\subset E\). Applying
\eqref{eq:representative-quotient} and
\eqref{eq:atom-coordinate-factorization} in this return atom yields
\[
 \begin{split}
 y\in\eta_{i-1}(x)
 &\iff f^{-q}y\in\eta_{i-1}(f^{-q}x)\\
 &\iff\pi^i(f^{-q}y)=\pi^i(f^{-q}x)
 \iff\tp^i(y)=\tp^i(x).
 \end{split}
\]
Thus \(\eta_{i-1}(x)=\eta_i(x)\cap(\tp^i)^{-1}\{\tp^i(x)\}\).
Since the partitions and \(\tp^i\) are measurable on a standard Borel
space, this gives
\begin{equation}\label{eq:geometric-coordinate}
 \sigma(\eta_{i-1})=\sigma(\eta_i)\vee(\tp^i)^{-1}\B(\R^{d_i}).
\end{equation}
The atom equalities hold on the common regular representatives; the
sigma-algebras are completed with respect to \(m\).

  For \(y,z\in\eta_i(x)\), put
\[
 \widetilde d^i(y,z)=|\tp^i(y)-\tp^i(z)|,\qquad
 \widetilde B^i(x,\rho)=
 \{y\in\eta_i(x):\widetilde d^i(x,y)\leqslant\rho\}.
\]
The function \(\widetilde d^i\) is a pseudometric on \(\eta_i(x)\)
and a metric on the quotient \(\eta_i(x)/\eta_{i-1}\): its zero-distance
classes are exactly the \(\eta_{i-1}\)-atoms.
For these adapted partitions and return coordinates, define
\begin{equation}\label{eq:transverse-dimension-definition}
 \widetilde\gamma_i(x)=\liminf_{\rho\downarrow0}
 \frac{\log\tm_x^i(\widetilde B^i(x,\rho))}{\log\rho}.
\end{equation}
This is the lower local dimension of the transverse measure
\((\tp^i)_*\tm_x^i\) at \(\tp^i(x)\). Unlike the intrinsic dimensions
in \eqref{eq:partial-dimensions}, \(\widetilde\gamma_i(x)\) depends on
the chosen \(\eps\), adapted partitions, and return coordinates,
and need not be constant or invariant. It is measurable. Comparing radii \(\rho\) and \(2\rho\) also shows
that open and closed balls give the same local dimensions.
On the return set we have the exact identity
\begin{equation}\label{eq:return-equality}
 \tp^i=\pi^i\quad\hbox{on }E.
\end{equation}

\section{Lower bound for transverse entropy}\label{sec:counting}

  The cylinder-counting argument of \cite[Section~4]{Tong} gives
the following estimate and applies without a multiplicity
restriction.

\begin{proposition}[Lower entropy bound]\label[proposition]{lem:counting}
Under the hypotheses of \cref{thm:main},
\begin{equation}\label{eq:counting}
 h_i-h_{i-1}\geqslant\lambda_i
 \max\{\ldel_i-\ldel_{i-1},\udel_i-\udel_{i-1}\}.
\end{equation}
\end{proposition}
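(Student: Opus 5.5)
The plan is the cylinder-counting argument in the spirit of Ledrappier--Young \cite[Section~10]{LY2} and \cite[Section~4]{Tong}: bound the number of \(\eta_{i-1}\)-type cylinders needed to cover a \(W^i\)-ball. It suffices to show
\[
 \ldel_i\leqslant\ldel_{i-1}+\frac{h_i-h_{i-1}}{\lambda_i},\qquad
 \udel_i\leqslant\udel_{i-1}+\frac{h_i-h_{i-1}}{\lambda_i},
\]
since the maximum in \eqref{eq:counting} is then controlled. We work with the adapted partitions \(\eta_i\prec\eta_{i-1}\) of \cref{prop:adapted-input} and their iterates \(f^{n}\eta_j\), for which \(H_m(\eta_j\mid f\eta_j)=h_j\).

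\textbf{Step 1 (information estimates).} By the Shannon--McMillan--Breiman type theorem for increasing partitions, for almost every \(x\),
\[
 -\tfrac1n\log \tm_x^i\bigl((f^{-n}\eta_i)(x)\bigr)\to h_i
 \quad\text{and}\quad
 -\tfrac1n\log \tm_x^{i-1}\bigl((f^{-n}\eta_{i-1})(x)\bigr)\to h_{i-1}.
\]
Using \eqref{eq:geometric-quotient}, iterated to \(f^{-n}\eta_{i-1}=\eta_{i-1}\vee f^{-n}\eta_i\), the conditional of \(\tm_x^i\) on \(\eta_{i-1}(x)\) is \(\tm_x^{i-1}\). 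Hence the \(f^{-n}\eta_i\)-cylinders inside \(\eta_{i-1}(x)\) each have \(\tm^{i-1}\)-mass roughly \(e^{-nh_{i-1}}\), while their \(\tm^i\)-mass is about \(e^{-nh_i}\). Thus the \(\tm_x^i\)-mass of the strong class is \(\gtrsim e^{-n(h_i-h_{i-1})}\) times the mass of a typical cylinder, up to \(e^{\pm n\eps}\), on a large-measure set.

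\textbf{Step 2 (geometry of cylinders).} A cylinder \((f^{-n}\eta_i)(y)\) equals \(f^{-n}\) of an \(\eta_i\)-atom, which sits in a fake chart of size \(\delta_a A^{-1}\). By \cref{lem:fake-lengths}, in particular \eqref{eq:strong-backward}, its \(W^{i-1}\)-directions contract like \(e^{-(\lambda_{i-1}-3\eps)n}\) and its \(E^{u,i}\)-directions like \(e^{-(\lambda_i-3\eps)n}\). Hence each cylinder lies in a \(W^i\)-ball of radius \(r_n\approx e^{-(\lambda_i-3\eps)n}\) about its points, once we restrict to a set where \(A\) is bounded and use temperedness \eqref{eq:tempered}.

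\textbf{Step 3 (comparison).} Conversely, the \(W^i\)-ball \(B^i(x,r_n)\) is covered by the cylinders meeting it. Fix one \(W^{i-1}\)-slice. Comparing the \(m^i\)-mass of \(B^i(x,r_n)\) with the \(m^{i-1}\)-mass of \(B^{i-1}(x,r_n)\), weighted by the transverse factor \(e^{-n(h_i-h_{i-1})}\), gives
\[
 m_x^i\bigl(B^i(x,r_n)\bigr)\gtrsim e^{-n(h_i-h_{i-1}+C\eps)}\,m_x^{i-1}\bigl(B^{i-1}(x,r_n/C)\bigr).
\]
Taking logarithms, dividing by \(\log r_n\approx-n\lambda_i\), and passing to liminf or limsup along \(n\) yields both inequalities. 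Interpolating between consecutive \(r_n\) is harmless because they differ by a bounded factor. Letting \(\eps\to0\) finishes the argument.

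\textbf{Main obstacle.} The difficult step is Step 3: passing between the adapted measures \(\tm^j\) and the intrinsic \(m^j\), and ensuring the estimates hold at generic points simultaneously for \(x\) and for the strong classes. I would handle this with density-point and Borel--Cantelli arguments on sets where \(A\) and the information functions are uniformly controlled, as in \cite[Section~4]{Tong}.
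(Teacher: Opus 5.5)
Your proposal is correct and follows the same route as the paper. The paper's proof simply invokes the cylinder-counting argument of \cite[Section~4]{Tong} and notes that its only geometric input, backward contraction along $W^i$ at rate $\lambda_i$, is supplied by \eqref{eq:strong-backward} for any multiplicity $d_i$, exactly as in your Step~2.

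Two statements need correcting when you write this out. In Step~3, the operative containment is that $B^i(x,2r_n)$ contains every cylinder meeting the good part of $B^{i-1}(x,r_n)$, not that the $W^i$-ball is covered by cylinders. In Step~1, the factor $e^{-n(h_i-h_{i-1})}$ is the ratio of a cylinder's $\tm^i$-mass to the $\tm^{i-1}$-mass of its slice in the strong class, not a statement about the $\tm^i$-mass of the strong class itself.
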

\begin{proof}
The cylinder estimates of \cite[Lemma~4.1]{Tong} use a
finite-entropy partition and the conditional probabilities on
the two subordinate partitions. Their geometric input,
\cite[Claim~4.2]{Tong}, is backward contraction along \(W^i\)
at the rate governed by its least exponent \(\lambda_i\).
This is supplied by \eqref{eq:strong-backward}, regardless of
its multiplicity. The proof of \cite[Section~4]{Tong} therefore
applies. The lower- and upper-dimension cases interchange an
eventual local-dimension estimate with one along a subsequence,
as explained at the end of that section. This gives both bounds
in \eqref{eq:counting} without assuming exact dimensionality.
\end{proof}

\section{Upper bound for transverse entropy}\label{sec:upper}

  Keep the geometric setup and the partitions from
Sections~\ref{sec:geometry}--\ref{sec:partitions} fixed. We prove
\[
 \min\{\alpha^{-1}\ldel_i-\ldel_{i-1},
        \alpha^{-1}\udel_i-\udel_{i-1}\}
 \geqslant \frac{\alpha}{\lambda_i+4\eps}(h_i-h_{i-1}).
\]
We first bound the transverse diameters of the atoms \(\A_n^k\)
defined below, then estimate \(\tm_x^i(\A_n(x))\). The resulting
transverse dimension bound is transferred to
\(\ldel_i,\udel_i\) in Section~\ref{subsec:plaque-dimensions}.

\subsection{Transverse cells and their iterates}\label{subsec:transverse-cells}

  Fix \(a>0\) and a bounded box \(K\subset\R^{d_i}\) containing
\(\tp^i(M)\). For \(n\geqslant1\), let \(\mathcal D_n\) be the
partition of \(K\) obtained by intersecting it with a translated grid
of half-open cubes of side \(r_n=e^{-an}\). The translation may depend
on \(n\). Define
\begin{equation}\label{eq:transported}
 \A_n=\eta_i\vee(\tp^i)^{-1}\mathcal D_n,\qquad
 \A_n^k=f^k\A_n\vee\eta_i\quad(0\leqslant k\leqslant n).
\end{equation}
We call the atoms of \(\A_n^k\) transverse cells. At time zero,
\[
 \A_n(x)=\eta_i(x)\cap(\tp^i)^{-1}\bigl(\mathcal D_n(\tp^i(x))\bigr).
\]
At time \(k\), the corresponding atom is
\begin{equation}\label{eq:transported-atom}
 \A_n^k(f^kx)=f^k(\A_n(x))\cap\eta_i(f^kx).
\end{equation}
Since \(f^k\eta_i\prec\eta_i\),
\begin{equation}\label{eq:transported-identities}
 \A_n^k=\eta_i\vee f^k\bigl((\tp^i)^{-1}\mathcal D_n\bigr),
 \qquad f^{-k}\A_n^k=\A_n\vee f^{-k}\eta_i.
\end{equation}
Thus each \(\eta_i\)-atom is split into at most
\(\#\mathcal D_n\) atoms of \(\A_n^k\). The intersection with
\(\eta_i(f^kx)\) in \eqref{eq:transported-atom} also gives
\[
 y\in f^{-k}(\A_n^k(f^kx))
 \quad\Longrightarrow\quad
 f^jy\in\eta_i(f^jx),\qquad 0\leqslant j\leqslant k,
\]
by the increasing property of \(\eta_i\). The atoms \(\A_n^k(x)\) need not
be connected.

\begin{proposition}[Transverse width along an orbit]\label[proposition]{prop:transport-geometry}
If
\begin{equation}\label{eq:scale-condition}
 a>\frac{\lambda_i+3\eps}{\alpha},
\end{equation}
then, for almost every \(x\),
\begin{equation}\label{eq:geometric-shrinking}
 \max_{0\leqslant k\leqslant n}
 \diam\tp^i\bigl(\A_n^k(f^kx)\bigr)\longrightarrow0.
\end{equation}
Diameters are taken after intersection with the full-measure set
\(X\) specified in Section~\ref{sec:partitions}. The conclusion holds
for every sequence of translated grids.
\end{proposition}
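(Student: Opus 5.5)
Fix a cube \(D\in\mathcal D_n\) and a point \(y\in\A_n^k(f^kx)\). The plan is to control \(|\tp^i(y)-\tp^i(f^kx)|\) by pulling everything back to the last return to \(E\) before time \(k\). There \(\tp^i\) equals \(\pi^i\), which factors through the strong representatives \(\Pi^i\) by \eqref{eq:atom-coordinate-factorization}. The representatives move under \(\tg\) along fake \(i\)-leaves by the equivariance \eqref{eq:projection-equivariance}, and their growth is governed by \eqref{eq:F-forward}.

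Write \(y=f^kz\) with \(z\in\A_n(x)\). Then \(f^jz\in\eta_i(f^jx)\) for \(0\leqslant j\leqslant k\), and \(|\tp^i(z)-\tp^i(x)|\leqslant\sqrt{d_i}\,r_n\). Since \(\ell_0\) is constant on \(\eta_i\)-atoms, both orbits share their return times to \(E\).

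Let \(-q=\ell_0(x)\), so \(x_0=f^{-q}x\in E\), and let \(p\) be the last return time of \(f^kx\), i.e.\ \(f^{k-p}x\in E\) with \(k-p\geqslant -q\).

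\textbf{Step 1 (lower coordinate Hölder bound at time \(-q\)).} Applying \eqref{eq:representative-coordinate-holder} at \(x_0\), the representatives \(v_0=\Pi_{x_0}^i(\exp_{x_0}^{-1}f^{-q}z)\) and \(0\) satisfy \(|v_0|\leqslant (C_6 \sqrt{d_i}\,r_n)^{\alpha}\).

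\textbf{Step 2 (transport to time \(k-p\)).} By equivariance, \(\tg^{\,q+k-p}_{x_0}v_0\) is the representative of \(f^{k-p}z\) at \(f^{k-p}x\). This uses that the orbit segment stays in the agreement boxes, which is guaranteed by \eqref{eq:adapted-partition} and \eqref{eq:projection-domains}. Apply \eqref{eq:F-forward} to the segment from \(0\) to \(v_0\); after \(\tg\) this is a curve in the standard fake \(i\)-leaf of slope \(\leqslant\theta\), with length comparable to \(|v_0|\). This gives
\[
|\tg^{\,q+k-p}_{x_0}v_0|\leqslant C A_0 e^{(\lambda_i+3\eps)(q+k)}(\sqrt{d_i}\,r_n)^\alpha ,
\]
using \(A(x_0)\leqslant A_0\).

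\textbf{Step 3 (upper coordinate Hölder bound at time \(k-p\)).} Apply \eqref{eq:representative-coordinate-holder} again at \(f^{k-p}x\in E\) to get
\[
\widetilde d^i(y,f^kx)\leqslant C\bigl(e^{(\lambda_i+3\eps)(q+k)}e^{-a\alpha n}\bigr)^{\alpha}.
\]
Since \(k\leqslant n\) and \(a\alpha>\lambda_i+3\eps\), the exponent \((\lambda_i+3\eps)k-a\alpha n\) is at most \(-(a\alpha-\lambda_i-3\eps)n\). The only nonuniform term left is \(e^{(\lambda_i+3\eps)q}\), with \(q=-\ell_0(x)\) a fixed finite number for a.e.\ \(x\) (recall \(m(X_E)=1\)). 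Hence the maximum over \(k\) tends to zero, uniformly in the grid translation.

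\textbf{Main obstacle.} I expect the difficulty to lie in the validity conditions of the Hölder estimates, not in the exponents. Each step needs the following:
\begin{itemize}
\item the representatives must stay in \(V^i\), which \eqref{eq:projection-domains} provides;
\item the comparison at \(x_0\) requires \(f^{-q}z\in\eta_i(x_0)\), which holds by the increasing property;
\item the segment in Step 2 must remain inside the agreement boxes \(R(2\rho_\delta)\) at every intermediate time, where \(A\) may grow like \(e^{\eps j}\).
\end{itemize}

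For the last point, I would argue that each iterate \(f^jz\) lies in \(\eta_i(f^jx)\subset \exp R(\delta_aA^{-1})\). The representatives are therefore bounded by \(C_4\delta_aA^{-1}\) via \eqref{eq:projection-bound}, independently of the growth estimate. The fake-leaf segment lies in \(\tW^i(0)\) with both endpoints in that box, so it stays inside by the graph-segment argument \eqref{eq:graph-segment}. This confinement is needed only so that the \(\tg\)-iterates coincide with \(f\); the length bound \eqref{eq:F-forward} itself holds globally for \(\tg\).
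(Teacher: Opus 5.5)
Your proposal is correct and follows the paper's proof essentially step for step. The steps are the lower H\"older bound for \(\kappa^i\) at the last past return \(x_{-q}\in E\), then forward growth of the projected representative along the standard fake \(i\)-leaf via \eqref{eq:F-forward} and \eqref{eq:projection-equivariance}, then the upper H\"older bound at the last return \(k-p\leqslant k\), which leaves only the \(x\)-dependent factor \(e^{(\lambda_i+3\eps)q}\). Two small corrections: the curve from \(0\) to \(v_0\) should be the radial graph curve inside \(\tW^i_{x_0}(0)\), not a straight segment, and only the actual orbit points \(\exp_{f^jx}^{-1}f^jz\) need to lie in the agreement boxes, since equivariance and the length bound hold globally for \(\tg\).
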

\begin{proof}
Fix \(x\in X\cap X_E\), put \(q=-\ell_0(x)\), and write \(x_j=f^jx\).
Thus \(x_{-q}\in E\), and \(q\) remains fixed as \(n\to\infty\).

  For \(y\in\A_n(x)\), the increasing property gives
\(f^{-q}y\in\eta_i(x_{-q})\subset E\). Define the projected point
\(v_{-q}(y)\in\tW_{x_{-q}}^i(0)\) and its iterates by
\[
 v_{-q}(y)=\Pi_{x_{-q}}^i(\exp_{x_{-q}}^{-1}f^{-q}y),\qquad
 v_j(y)=\tg_{x_{-q}}^{j+q}v_{-q}(y)\quad(j\geqslant-q).
\]
Since \(f^{-q}y\in U_{x_{-q}}^i\), the coordinate factorization
\eqref{eq:atom-coordinate-factorization} gives
\[
 |\kappa_{x_{-q}}^i(v_{-q}(y))-\kappa_{x_{-q}}^i(0)|
 =|\tp^i(y)-\tp^i(x)|\leqslant\sqrt{d_i}\,e^{-an}.
\]
Hence
\begin{equation}\label{eq:initial-representatives}
 |v_{-q}(y)|\leqslant C_6d_i^{\alpha/2}e^{-a\alpha n}.
\end{equation}
A radial graph curve in \(\tW_{x_{-q}}^i(0)\) joins \(0\) to
\(v_{-q}(y)\) with length at most \(2C_1|v_{-q}(y)|\). Applying
\eqref{eq:F-forward} to this curve yields
\begin{equation}\label{eq:representative-growth}
 |v_j(y)|'
 \leqslant C_7 A(x_{-q})e^{-a\alpha n+(\lambda_i+3\eps)(j+q)},
 \qquad j\geqslant-q,
\end{equation}
where \(C_7\geqslant1\) depends only on \(C_1,C_2,C_6,d_i,\alpha\),
not on \(y,n,j,a\) or the grid translation.

  Fix \(0\leqslant k\leqslant n\), take
\(z\in\A_n^k(f^kx)\), and put \(y=f^{-k}z\in\A_n(x)\).
The increasing property gives
\begin{equation}\label{eq:whole-itinerary}
 f^jy\in f^{-(k-j)}\bigl(\eta_i(f^kx)\bigr)
 \subset\eta_i(x_j),\qquad -q\leqslant j\leqslant k.
\end{equation}
In particular,
\(p_j=\exp_{x_j}^{-1}f^jy\in R_{x_j}(\delta_aA(x_j)^{-1})\).
On these boxes \(\tg_{x_j}\) agrees with \(\widetilde f_{x_j}\), so
\(p_{j+1}=\tg_{x_j}(p_j)\) for \(-q\leqslant j<k\).
Using \eqref{eq:projection-equivariance}, we obtain
\begin{equation}\label{eq:saturation}
 p_j=\tg_{x_{-q}}^{j+q}p_{-q},\qquad
 v_j(y)=\Pi_{x_j}^i(p_j).
\end{equation}
Let
\[
 J_k=\max\{-q\leqslant j\leqslant k:x_j\in E\}.
\]
Because \(\ell_0\) is constant on \(\eta_i\)-atoms,
\(\ell_0(z)=\ell_0(f^kx)=J_k-k\). Also,
\(f^{J_k}y\in\eta_i(x_{J_k})\subset U_{x_{J_k}}^i\), so
\(v_{J_k}(y)\in V_{x_{J_k}}^i\). Therefore
\begin{equation}\label{eq:cell-return-coordinate}
 \tp^i(z)=\pi^i(f^{J_k}y)=\kappa_{x_{J_k}}^i(v_{J_k}(y)).
\end{equation}
For \(z'\in\A_n^k(f^kx)\), put \(y'=f^{-k}z'\).
The upper bound in \eqref{eq:representative-coordinate-holder} gives
\[
 |\tp^i(z)-\tp^i(z')|
 \leqslant C_6|v_{J_k}(y)-v_{J_k}(y')|^\alpha.
\]
Since \(J_k\leqslant k\leqslant n\), \eqref{eq:representative-growth}
implies
\begin{equation}\label{eq:return-diameter}
 \diam\tp^i\bigl(\A_n^k(f^kx)\bigr)
 \leqslant C_xe^{-\alpha(a\alpha-\lambda_i-3\eps)n}.
\end{equation}
Here \(C_x\) may depend on \(x\), including its past return time
\(q\), but is independent of \(n,k,a\) and the grid translation.
Since \(a\alpha-\lambda_i-3\eps>0\),
\eqref{eq:return-diameter} proves \eqref{eq:geometric-shrinking}.
\end{proof}

\subsection{Mass of a transverse cell}\label{sec:information}

  For measurable partitions \(\mathcal U,\mathcal V\), write
\begin{equation}\label{eq:information-definition}
 I_m(\mathcal U\mid\mathcal V)(x)
 =-\log m_x^{\mathcal V}(\mathcal U(x)),\qquad
 H_m(\mathcal U\mid\mathcal V)=\int I_m(\mathcal U\mid\mathcal V)\,dm,
\end{equation}
with \(-\log0=+\infty\); we use the conditional chain rule and
continuity under increasing conditioning as in \cite{Rokhlin}.

  We estimate \(\tm_x^i(\A_n(x))\) by comparing the probabilities of
\((f^{-n}\eta_i)(x)\) under \(\tm_x^i=m_x^{\eta_i}\) and
\(m_x^{\A_n}\). For the second probability, we assign weights
along successive cylinder refinements. The following elementary
lemma converts a bound on their total weight into a lower bound
for the true cylinder masses outside a small exceptional set.

\Needspace{7\baselineskip}
\begin{lemma}[Comparison weights]\label[lemma]{lem:comparison-weights}
Let \(\mu\) be a probability measure and \(\beta\) a countable
measurable partition. If \(w(B)\geqslant0\), \(B\in\beta\), and
\(\sum_{B\in\beta}w(B)\leqslant1\), then, for every \(t>0\),
\[
 \mu\{x:\mu(\beta(x))<t\,w(\beta(x))\}\leqslant t.
\]
\end{lemma}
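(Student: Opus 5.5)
The plan is a Markov-type inequality applied atom by atom. Let \(Z=\{x:\mu(\beta(x))<t\,w(\beta(x))\}\). The condition defining \(Z\) depends only on the atom \(\beta(x)\), so \(Z\) is a union of atoms of \(\beta\). Since \(\beta\) is countable and measurable, \(Z\) is measurable, and we can write
\[
 Z=\bigsqcup_{B\in\beta'}B,\qquad
 \beta'=\{B\in\beta:\mu(B)<t\,w(B)\}.
\]

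Countable additivity then gives
\[
 \mu(Z)=\sum_{B\in\beta'}\mu(B)\leqslant\sum_{B\in\beta'}t\,w(B)
 \leqslant t\sum_{B\in\beta}w(B)\leqslant t.
\]
The first inequality uses the defining strict inequality on each atom of \(\beta'\). The second uses \(w\geqslant0\) to enlarge the sum from \(\beta'\) to all of \(\beta\).

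Nothing here is a real obstacle. The only points to check are that \(Z\) is a countable union of atoms, which handles measurability and lets us use countable additivity, and that atoms with \(w(B)=0\) cannot lie in \(\beta'\), since \(\mu(B)<0\) is impossible. If the partition is only defined modulo null sets, as elsewhere in the paper, the same computation applies to a representative, because null atoms contribute nothing.
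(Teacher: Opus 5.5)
Your proposal is correct and is the paper's argument: you sum \(\mu(B)<t\,w(B)\) over the atoms where it holds, use \(w\geqslant0\) to extend the sum to all of \(\beta\), and conclude with \(\sum_{B\in\beta}w(B)\leqslant1\). Your remarks on measurability (a countable union of atoms) and on atoms with \(w(B)=0\) fill in details that the paper's one-line proof leaves implicit.
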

\begin{proof}
Sum \(\mu(B)<t\,w(B)\) over the atoms satisfying this inequality.
\end{proof}

\begin{theorem}[Mass of transverse cells]\label{thm:information}
For every \(a>0\) satisfying \eqref{eq:scale-condition} and every
sequence of translated grids in \eqref{eq:transported},
\begin{equation}\label{eq:information-bound}
 \liminf_{n\to\infty}-\frac1n
 \log\tm_x^i\bigl(\A_n(x)\bigr)\geqslant h_i-h_{i-1}
 \quad\text{for almost every }x.
\end{equation}
\end{theorem}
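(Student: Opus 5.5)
The plan is to compare two conditional probabilities of the same orbit cylinder and use the chain rule. Put \(\C_n=f^{-n}\eta_i\vee\A_n\). Since \(\A_n\) refines \(\eta_i\), the chain rule gives
\[
 \tm_x^i\bigl(\C_n(x)\bigr)=\tm_x^i\bigl(\A_n(x)\bigr)\,m_x^{\A_n}\bigl(\C_n(x)\bigr).
\]
We also have \(\tm_x^i(\C_n(x))\leqslant\tm_x^i((f^{-n}\eta_i)(x))\). Therefore
\[
 -\log\tm_x^i(\A_n(x))\geqslant I_m(f^{-n}\eta_i\mid\eta_i)(x)-I_m(\C_n\mid\A_n)(x).
\]
For the first term, \(f\eta_i\prec\eta_i\) and the chain rule give \(I_m(f^{-n}\eta_i\mid\eta_i)=\sum_{k<n}I_m(f^{-1}\eta_i\mid\eta_i)\circ f^k\). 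The ergodic theorem and \eqref{eq:adapted-entropies} then give \(\frac1nI_m(f^{-n}\eta_i\mid\eta_i)\to h_i\) almost everywhere. So it suffices to prove
\[
 \limsup_n\tfrac1nI_m(\C_n\mid\A_n)\leqslant h_{i-1}
\]
almost everywhere. I would prove this by comparison weights.

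\emph{Comparison weights.} Fix a coarse grid \(\Q\) of \(K\) with side \(r>0\), translated so that the \((\tp^i)_*m\)-mass of the \(\rho\)-neighbourhood of its boundary tends to zero as \(\rho\to0\). Put \(\eta_\Q=\eta_i\vee(\tp^i)^{-1}\Q\); then \(\eta_i\prec\eta_\Q\prec\eta_{i-1}\). By \eqref{eq:transported-identities}, \(\C_n=\bigvee_{k<n}f^{-k}(f^{-1}\A_n^{k+1})\vee\A_n\), so each atom of \(\C_n\) inside \(\A_n(x)\) is a path of successive splittings of cells \(\A_n^k\) by \(f^{-1}\eta_i\). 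Give the step at time \(k\) the factor \(m^{\eta_\Q}_{f^ky}((f^{-1}\eta_i)(f^ky))\) when the cell \(\A_n^k(f^ky)\) lies in a single \(\Q\)-cell. Otherwise give it that factor divided by \(\#\Q\). Let \(w\) be the product of these factors. At a good step the branches are disjoint pieces of one \(\eta_\Q\)-atom, so the factors sum to at most one. At a bad step the cell is split into at most \(\#\Q\) pieces of \(\eta_\Q\)-atoms, and the division by \(\#\Q\) restores the bound. Inducting over \(k\) gives \(\sum w\leqslant1\) on each \(\A_n\)-atom.

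Apply \cref{lem:comparison-weights} with \(\mu=m_x^{\A_n}\) and \(t=n^{-2}\), integrate over \(x\), and use Borel--Cantelli. This gives, almost surely for all large \(n\),
\[
 I_m(\C_n\mid\A_n)\leqslant-\log w+2\log n
 \leqslant\sum_{k<n}I_m(f^{-1}\eta_i\mid\eta_\Q)(f^kx)+N_n(x)\log\#\Q+2\log n,
\]
where \(N_n(x)\) counts the bad times \(k<n\).

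\emph{Bad times, and the main obstacle.} Controlling \(N_n\) is the step I expect to be hardest. By \cref{prop:transport-geometry}, the cell diameters \(\max_{k\leqslant n}\diam\tp^i(\A_n^k(f^kx))\) tend to zero. Hence, for each fixed \(\rho>0\) and large \(n\), a bad time forces \(\tp^i(f^kx)\) to lie within \(\rho\) of \(\partial\Q\). Birkhoff's theorem then gives
\[
 \limsup\tfrac1nN_n\leqslant m\{\tp^i\in(\partial\Q)_\rho\},
\]
which tends to zero as \(\rho\to0\). The delicate point is to set up the weights so that they are measurable and so that the factor at time \(k\) is evaluated at \(f^ky\) for the cylinder point \(y\), not at \(x\). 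This matches the true cylinder masses only because the whole itinerary stays in the \(\eta_i\)-atoms \eqref{eq:whole-itinerary}.

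\emph{Conclusion.} Since \(H_m(f^{-1}\eta_i\mid\eta_i)<\infty\), Chung's lemma makes the information functions integrable, so the ergodic theorem applies. It gives
\[
 \limsup_n\tfrac1nI_m(\C_n\mid\A_n)\leqslant H_m(f^{-1}\eta_i\mid\eta_\Q).
\]
Now refine \(\Q\) along a sequence of grids. By \eqref{eq:geometric-coordinate}, \(\sigma(\eta_\Q)\uparrow\sigma(\eta_{i-1})\), so continuity under increasing conditioning gives \(H_m(f^{-1}\eta_i\mid\eta_\Q)\downarrow H_m(f^{-1}\eta_i\mid\eta_{i-1})=h_{i-1}\), the last equality being the quotient identity. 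Combined with the first step, this proves \eqref{eq:information-bound}.
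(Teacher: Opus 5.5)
Your proposal follows the paper's proof: the same chain-rule reduction to an upper bound on \(-\frac1n\log m_x^{\A_n}((f^{-n}\eta_i)(x))\), products of one-step coarse conditional probabilities as comparison weights on the tree of cylinders in each \(\A_n\)-atom, and \cref{lem:comparison-weights} with \(t=n^{-2}\) plus Borel--Cantelli. It also uses \cref{prop:transport-geometry} to push bad times into a neighbourhood of \(\partial\Q\), and nested null-boundary grids with continuity of conditional entropy.

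The one difference is the multiplier at a bad step, and as written it is not well defined on the tree. The paper falls back on \(\tm^i_{f^kx}((f^{-1}\eta_i)(f^kx))\). Since the cell lies in a single \(\eta_i\)-atom, this depends only on the child, and the cost is \(\int_{V_{\ell,\rho}}I_0\,dm\). Your multiplier \(m^{\eta_\Q}_{f^ky}((f^{-1}\eta_i)(f^ky))/\#\Q\) depends on which \(\eta_\Q\)-atom contains \(f^ky\). At a bad step, a child \(D\cap F\) with \(F\in f^{-1}\eta_i\) can meet several such atoms. Indeed, \(f^{-1}\eta_i\) does not refine \(\eta_\Q\) in general, because an \(f^{-1}\eta_i\)-atom contains many strong classes with different values of \(\tp^i\). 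So \(w\) is not a function of the atoms of \(f^{-n}\eta_i\vee\A_n\), and the lemma cannot be applied to that partition as stated. Your sentence about the cell being ``split into at most \(\#\Q\) pieces of \(\eta_\Q\)-atoms'' silently uses a finer tree.

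Either of two short repairs closes this. First option: at bad steps also split each node by \(\eta_\Q\), with multiplier \(\nu_C(F)/\#\Q\) on \(D\cap F\cap C\), where \(\nu_C\) is the \(\eta_\Q\)-conditional probability on \(C\). Apply the lemma to this finer countable partition, and use that the cylinder atom through \(x\) contains the finer atom through \(x\). Second option: keep the cylinder tree and use \(\max_C\nu_C(F)/\#\Q\), the maximum taken over the at most \(\#\Q\) atoms \(C\) of \(\eta_\Q\) meeting \(D\) (which lies in one \(\eta_i\)-atom). Its sum over \(F\) is at most \(\sum_C\sum_F\nu_C(F)/\#\Q\leqslant1\), and its information is at most \(I_m(f^{-1}\eta_i\mid\eta_\Q)(f^ky)+\log\#\Q\).

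With either repair, your estimate involving \(N_n(x)\log\#\Q\) holds and the rest goes through. You pay a constant \(\log\#\Q\) per bad time instead of the fallback information \(I_0\), and \(\limsup_n N_n/n=0\) follows from the shrinking and the null boundary. The paper's fallback avoids the extra splitting altogether.

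Two minor points. ``Lies in a single \(\Q\)-cell'' should be read modulo conditional null sets, as in the paper's condition \(m_x^{\A_n^k}(\C_\ell(x))=1\). Integrability of \(I_m(f^{-1}\eta_i\mid\eta_\Q)\) needs no Chung lemma, since it is nonnegative with integral at most \(h_i\).
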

\begin{proof}
Fix \(a>0\) satisfying \eqref{eq:scale-condition} and the grid
sequence. Put
\[
 I_0=I_m(f^{-1}\eta_i\mid\eta_i),
 \qquad I_0\in L^1(m),\qquad \int I_0\,dm=h_i.
\]
Using the conditional chain rule and
\eqref{eq:conditional-equivariance}, we compute the
\(\tm_x^i\)-mass of \(\A_n(x)\cap(f^{-n}\eta_i)(x)\) in two
ways and obtain
\begin{equation}\label{eq:telescope}
 \begin{aligned}
 \tm_x^i(\A_n(x))\,
   m_x^{\A_n}((f^{-n}\eta_i)(x))
 &=\tm_x^i((f^{-n}\eta_i)(x))\,
   m_x^{f^{-n}\eta_i}(\A_n(x))\\
 &=\exp\Bigl(-\sum_{k=0}^{n-1}I_0(f^kx)\Bigr)
   \tm_{f^nx}^i(\A_n^n(f^nx)).
 \end{aligned}
\end{equation}
All factors are positive almost everywhere: both \(\A_n\) and
\(\A_n^n\) are finite refinements of \(\eta_i\), and
\(H_m(f^{-n}\eta_i\mid\A_n)\leqslant nh_i<\infty\).
Since the last factor is at most one,
\begin{equation}\label{eq:mass-reduction}
 -\log\tm_x^i(\A_n(x))
 \geqslant\sum_{k=0}^{n-1}I_0(f^kx)
       +\log m_x^{\A_n}((f^{-n}\eta_i)(x)).
\end{equation}
The first sum divided by \(n\) converges to \(h_i\) by Birkhoff's
theorem. It therefore suffices to prove
\[
 \limsup_{n\to\infty}-\frac1n
 \log m_x^{\A_n}((f^{-n}\eta_i)(x))\leqslant h_{i-1}
 \quad\text{almost everywhere}.
\]

  Choose nested grids \(\Q_\ell\), \(\ell\geqslant1\), of half-open
cubes of side \(2^{-\ell}\) in \(\R^{d_i}\), with a common
translation such that
\[
 m((\tp^i)^{-1}\partial\Q_\ell)=0
 \qquad\text{for every }\ell.
\]
Here \(\partial\Q_\ell\) is the union of the grid hyperplanes.
Such a translation exists because each coordinate marginal of
\((\tp^i)_*m\) has at most countably many atoms. Define
\[
 \C_\ell=\eta_i\vee(\tp^i)^{-1}\Q_\ell,
 \qquad I_\ell=I_m(f^{-1}\eta_i\mid\C_\ell).
\]
By \eqref{eq:geometric-coordinate},
\(\bigvee_{\ell\geqslant1}\C_\ell=\eta_{i-1}\). Continuity of
conditional entropy under increasing conditioning, using
\(H_m(f^{-1}\eta_i\mid\eta_i)=h_i<\infty\), gives
\begin{equation}\label{eq:entropy-convergence}
 \int I_\ell\,dm
 =H_m(f^{-1}\eta_i\mid\C_\ell)
 \downarrow H_m(f^{-1}\eta_i\mid\eta_{i-1})
 =h_{i-1},
\end{equation}
where the last equality follows from
\eqref{eq:geometric-quotient} and \eqref{eq:adapted-entropies}.
In particular, \(I_\ell\in L^1(m)\). Fix \(\ell\) until the
final limit.

  For \(n\geqslant1\) and \(0\leqslant k<n\), define
\begin{equation}\label{eq:predictor}
 q_n^k(x)=
 \begin{cases}
 m_x^{\C_\ell}((f^{-1}\eta_i)(x)),
   &m_x^{\A_n^k}(\C_\ell(x))=1,\\
 \tm_x^i((f^{-1}\eta_i)(x)),
   &\text{otherwise}.
 \end{cases}
\end{equation}
For almost every atom \(D\) of \(\A_n^k\), write \(m_D\) for
its conditional probability. If \(m_D(C)=1\) for some
\(C\in\C_\ell\), let \(\nu_D\) be the \(\C_\ell\)-conditional
probability on \(C\); otherwise, let it be the \(\eta_i\)-conditional
probability on the atom containing \(D\). Both \(\A_n^k\) and
\(\C_\ell\) finitely refine \(\eta_i\). With the above choice of
branch, their conditional probabilities satisfy
\(m_D=\nu_D|_D/\nu_D(D)\), with \(\nu_D(D)>0\); in particular,
\(m_D\ll\nu_D\). The definition of \(q_n^k\) uses the fixed
measure \(\nu_D\) itself, without restriction to \(D\):
\[
 q_n^k(y)=\nu_D(F)
 \quad\text{for }m_D\text{-almost every }y\in D\cap F,
 \qquad F\in f^{-1}\eta_i.
\]
Each \(q_n^k\) is measurable. Positivity
almost everywhere follows from \(I_\ell,I_0\in L^1(m)\).

  We first show that the products of these one-step probabilities
are valid comparison weights. Fix \(n\). Since
\[
 H_m(f^{-k}\eta_i\mid\A_n)\leqslant kh_i<\infty,
 \qquad 0\leqslant k\leqslant n,
\]
for almost every \(\A_n\)-atom \(A\), with conditional probability
\(m_A\), the intersections \(A\cap F'\), \(F'\in f^{-k}\eta_i\),
of positive \(m_A\)-mass form a countable partition modulo
\(m_A\) for each \(k\). Fix such an \(A\) on which the conditional
identities below also hold. The increasing property of
\(\eta_i\) makes these partitions the successive levels of a
tree with root \(A\).

  For a positive-mass atom \(B\) at depth \(k<n\), put
\(D=f^kB\in\A_n^k\), as in \eqref{eq:transported-identities}.
Its children are \(B\cap f^{-k}F\), where
\(F\in f^{-1}\eta_i\). Disintegration and
\eqref{eq:conditional-equivariance} give
\[
 \frac{m_A(B\cap f^{-k}F)}{m_A(B)}=m_D(F).
\]
Thus its positive-mass children correspond exactly to those
\(F\) with \(m_D(F)>0\). Assign weight \(1\) to the root
\(A\), and give each such child its parent's weight multiplied
by \(\nu_D(F)\). Since
\[
 \sum_{\substack{F\in f^{-1}\eta_i\\m_D(F)>0}}\nu_D(F)
 \leqslant1,
\]
the sum of the children's weights does not exceed the parent's
weight. Induction over the depth therefore bounds the total
weight at every level by \(1\).

  On a positive-mass terminal atom
\(B=A\cap F'\), \(F'\in f^{-n}\eta_i\), each factor
\(q_n^k\circ f^k\) equals the corresponding branch multiplier
\(m_A\)-almost everywhere. The terminal weight is therefore
\(\prod_{k=0}^{n-1}q_n^k(f^kx_B)\), with \(x_B\in B\) chosen
where these identities hold. Hence
\[
 \sum_B\prod_{k=0}^{n-1}q_n^k(f^kx_B)\leqslant1,
\]
where the sum is over all terminal atoms in \(A\) of positive
\(m_A\)-mass.
The true mass of each terminal atom satisfies
\(m_A(B)=m_x^{\A_n}((f^{-n}\eta_i)(x))\) for
\(m_A\)-almost every \(x\in B\). Assigning weight zero to the
null remainder, apply \cref{lem:comparison-weights} with
\(\mu=m_A\) and \(t=n^{-2}\), and integrate with respect to the
quotient measure on \(\A_n\). This gives
\[
 m\left\{x:
 m_x^{\A_n}((f^{-n}\eta_i)(x))
 <n^{-2}\prod_{k=0}^{n-1}q_n^k(f^kx)
 \right\}\leqslant n^{-2}.
\]
Borel--Cantelli yields, for \(m\)-almost every \(x\) and all
sufficiently large \(n\),
\begin{equation}\label{eq:likelihood-conclusion}
 -\log m_x^{\A_n}((f^{-n}\eta_i)(x))
 \leqslant
 \sum_{k=0}^{n-1}-\log q_n^k(f^kx)+2\log n.
\end{equation}

  It remains to bound the information of the comparison weights.
For \(\rho>0\), put
\[
 V_{\ell,\rho}
 =\{x:\dist(\tp^i(x),\partial\Q_\ell)\leqslant\rho\}.
\]
By \cref{prop:transport-geometry}, for almost every \(x\) and all
large \(n\),
\[
 \diam\tp^i(\A_n^k(f^kx))<\rho,
 \qquad 0\leqslant k<n.
\]
If \(f^kx\notin V_{\ell,\rho}\), this transverse image lies in
\(\Q_\ell(\tp^i(f^kx))\), so the first case of
\eqref{eq:predictor} applies. At the remaining times, either case
may apply. Since \(I_\ell,I_0\geqslant0\), we obtain
\begin{equation}\label{eq:predictor-information}
 -\log q_n^k(f^kx)
 \leqslant
 \bigl(I_\ell+\one_{V_{\ell,\rho}}I_0\bigr)(f^kx),
 \qquad 0\leqslant k<n,
\end{equation}
for all large \(n\). Birkhoff's theorem, with \(\ell,\rho\) fixed,
gives
\[
 \limsup_{n\to\infty}
 \frac1n\sum_{k=0}^{n-1}-\log q_n^k(f^kx)
 \leqslant\int I_\ell\,dm+\int_{V_{\ell,\rho}}I_0\,dm.
\]
Together with \eqref{eq:likelihood-conclusion},
\eqref{eq:mass-reduction}, and Birkhoff's theorem for \(I_0\),
this implies
\begin{equation}\label{eq:predictor-average}
 \liminf_{n\to\infty}-\frac1n\log\tm_x^i(\A_n(x))
 \geqslant h_i-\int I_\ell\,dm
              -\int_{V_{\ell,\rho}}I_0\,dm.
\end{equation}
Since \(I_0\in L^1(m)\) and the grid boundary has zero measure,
\(\int_{V_{\ell,\rho}}I_0\,dm\to0\) as \(\rho\downarrow0\).
Letting first \(\rho\downarrow0\) and then \(\ell\to\infty\),
and using \eqref{eq:entropy-convergence}, we obtain the claimed
bound.
Both limits are taken along countable sequences on a common
full-measure set, after the \(n\to\infty\) limit.
\end{proof}

  Integrating \eqref{eq:information-bound}, applying Fatou's lemma,
and using \(H_m(\A_n\mid\eta_i)\leqslant\log\#\mathcal D_n\), we obtain
\begin{equation}\label{eq:transverse-cell-count}
 h_i-h_{i-1}
 \leqslant\liminf_{n\to\infty}\frac1n H_m(\A_n\mid\eta_i)
 \leqslant\liminf_{n\to\infty}\frac1n\log\#\mathcal D_n.
\end{equation}

\begin{remark}\label{rem:cell-mass-inputs}
For the fixed ergodic system \((f,m)\), the proof above gives
\eqref{eq:information-bound} and \eqref{eq:transverse-cell-count}
for any increasing measurable partitions \(\eta_i\prec\eta_{i-1}\)
satisfying \eqref{eq:geometric-quotient} and
\[
 H_m(f^{-1}\eta_j\mid\eta_j)=h_j<\infty,
 \qquad j=i-1,i,
\]
provided that a bounded Borel coordinate \(\tp^i:M\to\R^{d_i}\),
with arbitrary finite \(d_i\), satisfies
\eqref{eq:geometric-coordinate} and its finite grid refinements
\eqref{eq:transported} satisfy
\eqref{eq:geometric-shrinking}. The scale condition
\eqref{eq:scale-condition} is used only to establish this shrinking
property, not in the mass comparison.
\end{remark}

\subsection{From cells to transverse balls}\label{subsec:transverse-balls}

  We pass from the masses of \(\A_n(x)\) to the masses of
\(\widetilde B^i(x,r)\). The required containment is obtained by
shifting each coordinate grid by \(0,r_n/3,2r_n/3\).

\begin{corollary}[Transverse dimension]\label[corollary]{thm:transverse-estimate}
For every sufficiently small fixed \(\eps>0\),
\begin{equation}\label{eq:transverse-estimate}
 \widetilde\gamma_i(x)\geqslant
 \frac{\alpha}{\lambda_i+4\eps}(h_i-h_{i-1})
 \quad\text{for almost every }x.
\end{equation}
\end{corollary}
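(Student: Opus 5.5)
The plan is to apply \cref{thm:information} with a scale \(a\) chosen just above the threshold \eqref{eq:scale-condition}. Concretely, take
\[
 a=\frac{\lambda_i+4\eps}{\alpha},
\]
which satisfies \(a>(\lambda_i+3\eps)/\alpha\). We then convert the cell-mass bound \eqref{eq:information-bound} into a bound on transverse balls by a shifted-grid covering argument. Since \(r_n=e^{-an}\), the target exponent is \((h_i-h_{i-1})/a=\alpha(h_i-h_{i-1})/(\lambda_i+4\eps)\), which is exactly the right side of \eqref{eq:transverse-estimate}.

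The first step is the grid families. For each \(n\) and each \(\sigma\in\{0,1,2\}^{d_i}\), let \(\mathcal D_n^\sigma\) be the grid of side \(r_n\) translated by \(\sigma r_n/3\) in the coordinates. This gives \(3^{d_i}\) grid sequences, each indexed by \(n\), and \cref{thm:information} applies to each of them. Since there are finitely many sequences, we get a common full-measure set on which, for every \(\sigma\) and every \(\eta>0\), we have \(\tm_x^i(\A_n^\sigma(x))\leqslant e^{-n(h_i-h_{i-1}-\eta)}\) for all large \(n\).

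The second step is the containment. Coordinatewise, for any point \(s\in\R\), among the three shifts at least one puts \(s\) at distance at least \(r_n/3\) from the grid points of that shifted lattice, since the three lattices together form a lattice of spacing \(r_n/3\). Choosing such a shift independently in each coordinate gives a \(\sigma=\sigma(x,n)\) for which the cube \(\mathcal D_n^\sigma(\tp^i(x))\) contains the sup-norm ball of radius \(r_n/3\) about \(\tp^i(x)\), and hence the Euclidean ball of the same radius. Therefore
\[
 \widetilde B^i(x,r_n/3)\subset\A_n^{\sigma}(x),
\]
where the left side is cut from \(\eta_i(x)\), matching the definition of \(\A_n^\sigma\). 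It follows that \(\tm_x^i(\widetilde B^i(x,r_n/3))\leqslant\max_\sigma\tm_x^i(\A_n^\sigma(x))\leqslant e^{-n(h_i-h_{i-1}-\eta)}\) for large \(n\).

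The third step interpolates between radii. For \(\rho\) small, choose \(n\) with \(r_{n+1}/3<\rho\leqslant r_n/3\). Monotonicity gives \(\tm_x^i(\widetilde B^i(x,\rho))\leqslant e^{-n(h_i-h_{i-1}-\eta)}\), and \(\log\rho\geqslant-a(n+1)-\log 3\). Dividing by \(\log\rho<0\) and letting \(\rho\downarrow0\) yields \(\widetilde\gamma_i(x)\geqslant(h_i-h_{i-1}-\eta)/a\). We then let \(\eta\downarrow0\) along a countable sequence. The smallness of \(\eps\) enters only through the standing requirements already imposed, namely \(\eps<\eps_*\) and the other constraints fixed in Section~\ref{sec:geometry}, which make \(\alpha\in(0,1)\) and make all earlier constructions valid. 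If \(h_i=h_{i-1}\), the bound is trivial.

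The main obstacle is the bookkeeping in the second step. We must confirm that the shift selection works with half-open cubes, that the cubes are intersected with the bounded box \(K\) (which is harmless because \(\tp^i(M)\subset K\)), and that the selected \(\sigma\) may depend on \(n\) and \(x\) while the almost-sure statement from \cref{thm:information} still holds uniformly. This uniformity holds because the set of shift sequences used is finite.
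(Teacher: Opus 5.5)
Your proposal is correct and is essentially the paper's proof: the same scale \(a=(\lambda_i+4\eps)/\alpha\), the \(3^{d_i}\) grid sequences shifted by \(r_n\sigma/3\), a common full-measure set from \cref{thm:information}, containment of a transverse ball in one shifted cell, and interpolation between consecutive radii. The only difference is cosmetic. The paper uses the slacker radius \(r_n/10\), with coordinatewise distance at least \(r_n/6\) from the boundary. This avoids the case where your closed ball of radius exactly \(r_n/3\) touches the excluded face of a half-open cube. Shrinking your radius by any fixed factor handles this and does not change the dimension bound.
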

\begin{proof}
Take \(a=(\lambda_i+4\eps)/\alpha\) and \(r_n=e^{-an}\).
For \(v\in\{0,1,2\}^{d_i}\), translate the grid of side \(r_n\) by
\(r_nv/3\), and write \(Q_{n,v}(z)\) for its full cube containing
\(z\in\R^{d_i}\). In each coordinate, one of the three shifts places
\(z\) at distance at least \(r_n/6\) from the boundary. Hence, for
some \(v=v(n,z)\),
\[
 B(z,r_n/10)\subset Q_{n,v}(z).
\]
Let \(\A_{n,v}\) be the partition defined by \eqref{eq:transported}
using the restriction of this shifted grid to \(K\).
Fix \(x\) in the common full-measure set of \cref{thm:information}
for these \(3^{d_i}\) grid sequences, and put
\(\nu_x=(\tp^i)_*\tm_x^i\), \(z=\tp^i(x)\).
For every \(\zeta>0\), the theorem gives, for all large \(n\) and
every \(v\),
\[
 \nu_x(Q_{n,v}(z))=\tm_x^i(\A_{n,v}(x))
 \leqslant e^{-n(h_i-h_{i-1}-\zeta)}.
\]
The equality holds because \(\nu_x(K)=1\). Choosing \(v(n,z)\) gives
the same upper bound for \(\nu_x(B(z,r_n/10))\). If
\(r_{n+1}/10<r\leqslant r_n/10\), monotonicity yields
\[
 \frac{-\log\nu_x(B(z,r))}{-\log r}
 \geqslant
 \frac{-\log\nu_x(B(z,r_n/10))}{a(n+1)+\log10}.
\]
Let \(n\to\infty\) and then \(\zeta\downarrow0\). This proves
\(\widetilde\gamma_i(x)\geqslant(h_i-h_{i-1})/a\).
\end{proof}

\subsection{Transfer to intrinsic dimensions}\label{subsec:plaque-dimensions}

  We use \cite[Lemmas~5.1, 5.3, and~5.4]{Tong}; the latter two
are the dimension inequalities in
\cite[Lemmas~11.3.1--11.3.2]{LY2}. As in
\cite[Section~5.2.2]{Tong}, we apply them to a return plaque.
Write \(\ld_\omega\) and \(\ud_\omega\) for the lower and upper
local dimensions, respectively, of a probability measure
\(\omega\).

\begin{proposition}[Upper entropy bound]\label[proposition]{prop:restriction-transfer}
With \(\eps\) and \(\alpha\) as above, we have
\begin{equation}\label{eq:transfer-bound}
 \begin{aligned}
 \alpha^{-1}\ldel_i-\ldel_{i-1}
 &\geqslant \frac{\alpha(h_i-h_{i-1})}{\lambda_i+4\eps},\\
 \alpha^{-1}\udel_i-\udel_{i-1}
 &\geqslant \frac{\alpha(h_i-h_{i-1})}{\lambda_i+4\eps}.
 \end{aligned}
\end{equation}
\end{proposition}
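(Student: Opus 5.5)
We transfer \cref{thm:transverse-estimate} from the transverse measure \((\tp^i)_*\tm_x^i\) to the intrinsic conditional measures \(m_x^i,m_x^{i-1}\). We work on a return plaque, as in \cite[Section~5.2.2]{Tong}. Fix a density point \(x\in E\) of the full-measure set where \eqref{eq:transverse-estimate} holds and where the intrinsic local dimensions equal \(\ldel_j,\udel_j\). On \(E\) we have \(\tp^i=\pi^i\) by \eqref{eq:return-equality}. Thus on \(\eta_i(x)\) the pseudometric \(\widetilde d^i\) is the \(\pi^i\)-distance, and \(\pi|_{L_x}\) is available from \cref{prop:box-input}.

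\textbf{Step 1: from adapted to subordinate conditionals.} By \cite[Lemma~5.1]{Tong}, local dimensions of \(\tm_x^j=m_x^{\eta_j}\) and of \(m_x^j\) agree almost everywhere, since \(\eta_j(x)\) is obtained from \(\xi_j(x)\) by intersecting with the past-name atom \(\mathcal P^+(x)\). The standard argument of \cite[Section~10]{LY2} shows that \(\mathcal P^+(x)\) contains a ball of radius \(\geqslant e^{-\zeta n}\)-comparable at the relevant scales, or at least that the mass ratio decays subexponentially. In particular, \((\pi^i)_*\tm_x^i\) has lower local dimension at least the right side of \eqref{eq:transverse-estimate}.

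\textbf{Step 2: Ledrappier--Young dimension inequalities.} Push \(\tm_x^i\) forward by \(\pi|_{L_x}\) to a measure \(\omega\) on \(\R^{(i-1)}\times\R^{d_i}\). Its disintegration over \(\pi^i\) consists of the images of \(\tm^{i-1}\) on \(\eta_{i-1}\)-atoms, using \eqref{eq:geometric-coordinate}. These images have local dimensions \(\ldel_{i-1},\udel_{i-1}\), because \(\pi\) is bi-Lipschitz on \(W^{i-1}\)-plaques. Apply \cite[Lemmas~5.3--5.4]{Tong}, i.e.\ \cite[Lemmas~11.3.1--11.3.2]{LY2}, in the form \(\ld_\omega\geqslant\ld_{\pi^i_*\omega}+\ld_{\text{fiber}}\), and likewise with upper dimensions. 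This gives
\[
 \ld_\omega\geqslant\frac{\alpha(h_i-h_{i-1})}{\lambda_i+4\eps}+\ldel_{i-1},
 \qquad
 \ud_\omega\geqslant\frac{\alpha(h_i-h_{i-1})}{\lambda_i+4\eps}+\udel_{i-1}.
\]
For the upper version we use the subsequence form of the lemma, together with the fact that \eqref{eq:transverse-estimate} is a liminf bound valid at all small scales.

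\textbf{Step 3: H\"older transfer.} Since \(\pi|_{L_x}\) is \(\alpha\)-bi-H\"older (\cref{prop:box-input}), a Riemannian ball of radius \(r\) maps into a ball of radius \(C_5r^\alpha\). Consequently \(\ld_\omega\leqslant\alpha^{-1}\ldel_i\) and \(\ud_\omega\leqslant\alpha^{-1}\udel_i\), after using Step 1 for \(j=i\) and the comparability of intrinsic and Riemannian balls on \(W^i\)-plaques. Combining this with Step 2 yields \eqref{eq:transfer-bound}.

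The main obstacle is Step 2: the fiber measures must be exactly the conditionals whose dimensions are \(\ldel_{i-1},\udel_{i-1}\). This requires checking that the \(\pi^i\)-fibers inside \(\eta_i(x)\) are the \(\eta_{i-1}\)-atoms at return points, and that the lemmas' measurability hypotheses hold uniformly. I would handle this by restricting to a positive-measure subset of \(E\) on which the local-dimension limits are uniform (Egorov), exactly as in \cite[Section~5.2.2]{Tong}.
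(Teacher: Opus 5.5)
\textbf{Gap: the adapted conditionals are never related to the intrinsic ones.} Steps~2 and~3 both rest on Step~1, and Step~1 does not hold in this setting. You claim that the local dimensions of $\tm_x^j=m_x^{\eta_j}$ agree almost everywhere with those of $m_x^j=m_x^{\xi_j}$. You justify this by an LY2-type claim that $\mathcal P^+(x)$ contains leaf balls of subexponentially small radius. No such estimate is available here. The partition $\mathcal P$ of \cref{prop:adapted-input} is only required to satisfy $\mathcal P^+(x)\subset\exp_xR_x(\delta_aA(x)^{-1})$, with no boundary control, and the paper explicitly warns that $\eta_j(x)$ may contain no leaf neighborhood of $x$. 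So the refinement of $\xi_j(x)$ by $\mathcal P^+$ need not be countable modulo $m_x^j$. Then $\tm_x^j$ need not be a normalized restriction of $m_x^j$, and no density argument compares them. (Also, \cite[Lemma~5.1]{Tong} is the upper product inequality, not such a comparison.)

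Your Step~3 needs $\ld_{\tm_x^i}\leqslant\ldel_i$ at typical points. Your Step~2 needs the fiber measures of $\pi_*\tm_x^i$, which are images of $\tm^{i-1}$ on $\eta_{i-1}$-atoms, to have dimensions exactly $\ldel_{i-1},\udel_{i-1}$. Neither follows, and an Egorov restriction cannot help, because the problem is the relation between two different families of conditional measures, not uniformity of limits.

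\textbf{The missing idea: push forward the plaque measure, not $\tm_x^i$.}
\begin{itemize}
\item Disintegrate $m|_E/m(E)$ over $\widehat\xi_i|_E$. Work with the plaque probability $\mu$ on $L$, with $\nu=\pi_*\mu$, $\bar\nu=(\pi^i)_*\mu$, and fibers $\nu_t$.
\item Subordinate atoms contain relative leaf neighborhoods and leaves are second countable. Hence $\xi_j$ refines $\widehat\xi_j$ countably modulo plaque probabilities, and $m^j$ is a normalized restriction of the plaque probability.
\item Density arguments then apply. With the $\alpha$-H\"older bound for $\pi|_L$ and the bi-Lipschitz fiber bound, they give $\ld_\nu\leqslant\alpha^{-1}\ldel_i$, $\ud_\nu\leqslant\alpha^{-1}\udel_i$, $\ld_{\nu_t}=\ldel_{i-1}$ and $\ud_{\nu_t}=\udel_{i-1}$.
\item The adapted conditionals enter only through the iterated disintegration $\mu=\int_L\tm_y^i\,d\mu(y)$, hence $\bar\nu=\int_L(\pi^i)_*\tm_y^i\,d\mu(y)$.
\item \Cref{thm:transverse-estimate}, with $\tp^i=\pi^i$ on $E$, bounds each component from below. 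The mixture inequality \cite[Lemma~5.4]{Tong} passes this bound to $\ld_{\bar\nu}$. A lower bound on lower local dimension is exactly what survives mixing, and it is the only direction needed from $\tm$.
\item The lower product inequality \cite[Lemma~5.3]{Tong} and the upper one \cite[Lemma~5.1]{Tong}, applied to $\nu$, then give the two lines of the statement.
\end{itemize}
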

\begin{proof}
Disintegrate \(m|_E/m(E)\) over \(\widehat\xi_i|_E\). For almost
every \(W^i\)-plaque \(L\) in the resulting quotient, let \(\mu\)
be the conditional probability on \(E\cap L\), viewed as a
probability on \(L\), and write
\[
 \nu=\pi_*\mu,\qquad \bar\nu=(\pi^i)_*\mu,
 \qquad \nu(du,dt)=\nu_t(du)\,\bar\nu(dt).
\]
For \(j=i-1,i\), the refinement from \(\widehat\xi_j\) to
\(\xi_j\) is countable modulo plaque probabilities. Indeed,
before restriction to the common full-measure set, distinct
subordinate atoms in a fixed leaf contain disjoint relative
leaf neighborhoods, and the leaf is second countable.
The conditional probabilities on subordinate atoms are
therefore normalized restrictions of the corresponding plaque
probabilities. Differentiation preserves both local dimensions
almost everywhere under these restrictions and under restriction
to \(E\). Together with the \(\alpha\)-H\"older bound for
\(\pi|_L\) and its bi-Lipschitz restriction to each strong plaque,
this gives
\[
 \begin{aligned}
 \ld_\nu(u,t)&\leqslant\alpha^{-1}\ldel_i,
 &\ud_\nu(u,t)&\leqslant\alpha^{-1}\udel_i,\\
 \ld_{\nu_t}(u)&=\ldel_{i-1},
 &\ud_{\nu_t}(u)&=\udel_{i-1},
 \end{aligned}
\]
for \(\nu\)-almost every \((u,t)\).

  Since \(\eta_i\) refines \(\widehat\xi_i\) and \(E\) is a
union of \(\eta_i\)-atoms, iterated disintegration gives
\[
 \mu=\int_L\tm_y^i\,d\mu(y),
 \qquad
 \bar\nu=\int_L(\pi^i)_*\tm_y^i\,d\mu(y).
\]
On \(E\), we have \(\tp^i=\pi^i\), and \(\tm_y^i\) is almost
everywhere constant on each typical \(\eta_i\)-atom. Thus, for
\(\mu\)-almost every \(y\), \cref{thm:transverse-estimate} gives
\[
 \ld_{(\pi^i)_*\tm_y^i}(t)
 \geqslant \frac{\alpha(h_i-h_{i-1})}{\lambda_i+4\eps}
\]
for \((\pi^i)_*\tm_y^i\)-almost every \(t\).
The mixture inequality \cite[Lemma~5.4]{Tong} therefore yields
\[
 \ld_{\bar\nu}(t)
 \geqslant \frac{\alpha(h_i-h_{i-1})}{\lambda_i+4\eps}
 \qquad\text{for }\bar\nu\text{-almost every }t.
\]

  The lower product inequality \cite[Lemma~5.3]{Tong}, with
fiber bound \(\ldel_{i-1}\), now gives the first inequality in
\eqref{eq:transfer-bound}. For the second, apply
\cite[Lemma~5.1]{Tong} with the constant upper bound
\(\alpha^{-1}\udel_i\) for \(\ud_\nu\), obtaining
\[
 \ud_{\nu_t}(u)+\ld_{\bar\nu}(t)
 \leqslant\alpha^{-1}\udel_i
\]
for \(\nu\)-almost every \((u,t)\). Combining this with
\(\ud_{\nu_t}(u)=\udel_{i-1}\) and the lower bound for
\(\ld_{\bar\nu}\) proves the second inequality.
\end{proof}

\section{Entropy formulas}\label{sec:consequences}

  We combine \eqref{eq:counting} and \eqref{eq:transfer-bound}, then
apply the same cell-mass estimate to the nonpositive-exponent bundle.

\subsection{The transverse entropy formula}
\begin{proof}[Proof of \cref{thm:main}]
For fixed \(\eps\), take \(a=(\lambda_i+4\eps)/\alpha\), as in
\cref{thm:transverse-estimate}. The number of cubes of side
\(e^{-an}\) meeting the bounded coordinate box \(K\) is at most
\(C_K e^{and_i}\), uniformly in their translation. Hence
\(\limsup_n n^{-1}\log\#\mathcal D_n\leqslant ad_i\), and
\eqref{eq:transverse-cell-count} gives
\[
 h_i-h_{i-1}\leqslant\frac{\lambda_i+4\eps}{\alpha}\,d_i.
\]
The partial entropies and intrinsic dimensions do not depend on the
auxiliary choices. We may therefore combine this estimate with
\cref{prop:restriction-transfer,lem:counting} and, after taking the
small-scale limits at fixed \(\eps\), let \(\eps\downarrow0\).
Since \(\alpha\to1\), we obtain
\[
 \begin{aligned}
 \max\{\ldel_i-\ldel_{i-1},\udel_i-\udel_{i-1}\}
 &\leqslant\frac{h_i-h_{i-1}}{\lambda_i}\\
 &\leqslant\min\{\ldel_i-\ldel_{i-1},\udel_i-\udel_{i-1},d_i\}.
 \end{aligned}
\]
Since \(h_i\geqslant h_{i-1}\), this proves \eqref{eq:main-formula}.
\end{proof}

\subsection{The unstable entropy formula}\label{subsec:unstable-proof}

  We now assume only that \(E^u\oplus E^{cs}\) is dominated.
We use forward length bounds with arbitrarily small exponential
rates along the fake \(cs\)-leaves and a positive H\"older
exponent independent of these rates. Unlike the transverse
dimension comparison, this argument does not require the
H\"older exponent to approach one.

\subsubsection{Fake foliations and local unstable manifolds}
Assume first that both bundles are nonzero. Put
\(D=\dim E^{cs}\), and let \(\lambda^u>0\) be the least
positive exponent. We repeat the geometric and partition
constructions for this splitting, choosing all auxiliary parameters
anew. Fix
\(0<\eps<\lambda^u/14\) sufficiently small and use the
cutoff and graph-transform constructions
of \cite[Section~2.1]{GTY} and \cite[Section~7.2.1]{Tong} to
obtain \(\tg_x\) and the fake foliations
\(\tW_x^u,\tW_x^{cs}\), whose leaves are complete graphs of
small slope over \(E_x^u,E_x^{cs}\), respectively. Use the
corresponding box norm
\[
 |v|'=\max\{|v_u|,|v_{cs}|\},\qquad
 R_x(r)=\{v:|v|'\leqslant r\}.
\]
The largest exponent on \(E^{cs}\) is nonpositive. The
finite-block estimates of \cite[Section~8.1]{ABC}, used in
\cite[Lemmas~2.1--2.2]{GTY}, apply to \(E^{cs}\) forward and
\(E^u\) backward. Choose the cutoff radius and graph slopes so
that the length argument of \cite[Remark~3]{GTY} gives a Borel
function \(A\geqslant1\), with
\(A(f^{\pm1}x)\leqslant e^\eps A(x)\), and a constant
\(C_8\geqslant1\) such that
\begin{align}
 \operatorname{length}(\tg_x^n\gamma)
 &\leqslant C_8A(x)e^{3\eps n}\operatorname{length}(\gamma),
 &&\gamma\subset\tW_x^{cs}(v),\label{eq:cs-growth}\\
 \operatorname{length}(\tg_x^{-n}\gamma)
 &\leqslant C_8A(x)e^{-(\lambda^u-3\eps)n}
                 \operatorname{length}(\gamma),
 &&\gamma\subset\tW_x^u(v).\label{eq:u-backward}
\end{align}
These bounds hold for all \(n\geqslant0\), all rectifiable
curves in the indicated leaves, and all base points in a
common invariant set of full measure. The constant \(C_8\)
may depend on \(\eps\), but not on \(x,n,v,\gamma\).

  Projection along \(\tW_x^u\) onto the standard fake
\(cs\)-leaf is defined on the whole tangent space by
\[
 \Pi_x^{cs}:T_xM\longrightarrow\tW_x^{cs}(0),\qquad
 \{\Pi_x^{cs}(v)\}=\tW_x^u(v)\cap\tW_x^{cs}(0).
\]
The small graph slopes give uniqueness and, as in
\eqref{eq:projection-bound}--\eqref{eq:projection-equivariance},
\begin{equation}\label{eq:cs-projection}
 |\Pi_x^{cs}(v)|'\leqslant2|v|',\qquad
 \Pi_{fx}^{cs}\circ\tg_x=\tg_x\circ\Pi_x^{cs}.
\end{equation}
A \(cs\)-transversal is an open subset of a complete \(C^1\)
graph over \(E_x^{cs}\), relative to \(E_x^u\), with sufficiently
small slope. By the proof of \cite[Lemma~2.4]{GTY}, holonomies
along \(\tW_x^u\) between such transversals are
\(\alpha_0\)-bi-H\"older for an exponent \(\alpha_0\in(0,1)\)
independent of sufficiently small \(\eps\). For fixed \(\eps\),
the constants are uniform when \(A(x)\), the domain diameter,
and the holonomy displacement are bounded. In that proof, the
backward center estimate is replaced by a Lipschitz bound for
\(\tg_x^{-1}\) uniform in \(x\) and sufficiently small \(\eps\).
Domination keeps the backward transverse graphs
in small \(cs\)-cones, so the same forward length comparison
and \eqref{eq:u-backward} apply.

  Choose \(\delta>0\) sufficiently small, put
\(\rho_\delta(x)=\delta A(x)^{-1}\), and define
\begin{equation}\label{eq:past-trapped}
 S^-_\delta(x)=
 \left\{v\in T_xM:
 \tg_x^{-n}v\in R_{f^{-n}x}(\rho_\delta(f^{-n}x))
 \text{ for every }n\geqslant0\right\}.
\end{equation}
The set \(S^-_\delta(x)\) consists of points whose backward
iterates remain in the agreement charts. The same backward-orbit
condition defines \(S^{cu}_\delta(x)\) in
\cite[Section~2.3]{GTY}. Here we use it for the two-block
splitting, without the fake \(cu\)-leaf containment of
\cite[Lemma~2.7]{GTY}.

  For regular \(y\in\exp_xS^-_\delta(x)\), let
\(W^u_{x,2\delta}(y)\) be the inverse exponential image of
the component through \(y\) of
\(W^u(y)\cap\exp_xR_x(2\rho_\delta(x))\).
The local argument of \cite[Lemma~2.5]{GTY}, using
\eqref{eq:u-backward} and the backward-orbit condition, identifies
this plaque with the fake unstable leaf. Together with the
recurrence argument of \cite[Lemma~2.6\textup{(B2)}]{GTY}, it
gives, for almost every \(x\) and every such regular \(y\),
\begin{gather}
 W^u_{x,2\delta}(y)
 =\tW_x^u(\exp_x^{-1}y)\cap R_x(2\rho_\delta(x)),
 \label{eq:two-block-plaque}\\*
 S^-_\delta(x)\cap\exp_x^{-1}W^u(y)
 \subset W^u_{x,2\delta}(y).
 \label{eq:two-block-local-class}
\end{gather}
The one-step compatibility in \cite[Lemma~2.6\textup{(B1)}]{GTY}
holds when \(y\) and \(fy\) satisfy the corresponding
backward-orbit conditions at \(x\) and \(fx\).

\subsubsection{Return coordinates and the entropy estimate}
\begin{proof}[Proof of \cref{thm:unstable}]
If \(E^u=0\) or \(E^{cs}=0\), Ruelle's inequality for \(f\)
or \(f^{-1}\) gives \(h_m(f)=0\), and hence
\(h_m^u(f)=0\). Otherwise use the fake foliations, projections, and
estimates above.
Let \(X\) be a common invariant full-measure set on which the
geometric and partition identities used below hold.

  The constructions of \cite[Sections~3.1, 3.3, and~5.1]{GTY}
give a local unstable lamination box \(S\), an increasing
generating partition \(\xi^u\) subordinate to \(W^u\), a
positive-measure return set \(E\subset S\cap X\) on which
\(A\) is bounded, and a fixed \(D\)-dimensional transversal
\(T=\tau(B^D)\). Here \(B^D\subset\R^D\) is a bounded disk,
and \(\tau\) is a bi-Lipschitz \(C^1\) parametrization. Every
local \(W^u\)-plaque of \(S\) meeting \(E\) meets \(T\) in
one point; let \(\phi_0(y)\in B^D\) be its coordinate for
\(y\in E\). Choose the inner box and a uniform radius \(r_*>0\)
so that \(\exp_x^{-1}T\) is a \(cs\)-transversal for \(x\in E\).
For \(y\in E\cap\exp_x(S^-_\delta(x)\cap R_x(r_*))\), the local
\(W^u\)-plaque through \(y\) reaches \(T\) inside the chart of
\eqref{eq:two-block-plaque}. Its representative
\(\Pi_x^{cs}(\exp_x^{-1}y)\) reaches the same intersection
by fake unstable holonomy, with uniformly bounded displacement.
These choices are made before the adapted partition.

  Choose a finite partition \(\mathcal P_0\) with
\(h_m(f,\mathcal P_0)\geqslant h_m(f)-\eps\).
The proof of \cite[Lemma~2.8]{GTY} gives a finite-entropy
partition \(\mathcal P\) refining \(\mathcal P_0\),
\(\{S,M\setminus S\}\), and \(\{E,M\setminus E\}\), such that
\[
 \mathcal P^+(x)\subset\exp_xS^-_{\delta_a}(x),\qquad
 0<\delta_a<\min\{\delta/4,r_*/4\}.
\]
Choose \(\delta_a\) also small enough, as in
\cite[Section~3.2]{GTY}, that points in the same
\(\mathcal P^+\)-atom and the same local \(W^u\)-plaque in the
base chart lie in one plaque of \(S\) at each past return to
\(S\). Put
\(\eta_2=\mathcal P^+\) and
\(\eta_1=\xi^u\vee\mathcal P^+\).
By \eqref{eq:two-block-plaque}--\eqref{eq:two-block-local-class}
and the proofs of \cite[Lemmas~3.3--3.5]{GTY},
\begin{gather}
 \eta_2\prec\eta_1,\qquad f\eta_j\prec\eta_j\quad(j=1,2),\qquad
 f^{-1}\eta_1=\eta_1\vee f^{-1}\eta_2,
 \label{eq:unstable-quotient}\\
 H_m(f^{-1}\eta_2\mid\eta_2)=h_m(f,\mathcal P),\qquad
 H_m(f^{-1}\eta_1\mid\eta_1)=h_m^u(f).
 \label{eq:unstable-entropies}
\end{gather}
Moreover, the \(\eta_1\)-subatoms of \(\eta_2(x)\) are exactly
its intersections with the local \(W^u\)-plaques in the
\(x\)-chart.

  For \(x\in E\), the atom \(\eta_2(x)\) is contained in
\(E\cap\exp_x(S^-_\delta(x)\cap R_x(r_*/4))\), so the preceding
holonomy comparison applies. Set
\(V_x^{cs}=\Pi_x^{cs}(\exp_x^{-1}\eta_2(x))\).
Holonomy along \(\tW_x^u\) from the standard fake \(cs\)-leaf
to \(\exp_x^{-1}T\), followed by \(\tau^{-1}\circ\exp_x\),
defines a bijection
\[
 \kappa_x^{cs}:V_x^{cs}\longrightarrow\phi_0(\eta_2(x)),\qquad
 \phi_0(y)=\kappa_x^{cs}\bigl(\Pi_x^{cs}(\exp_x^{-1}y)\bigr),
 \quad y\in\eta_2(x).
\]
In particular, \(\kappa_x^{cs}(0)=\phi_0(x)\).
The plaque identification ensures that both sides use the
same local \(W^u\)-class; equivalently, the fibers of
\(\phi_0|_{\eta_2(x)}\) are the \(\eta_1\)-subatoms.
The bound \eqref{eq:cs-projection} places \(V_x^{cs}\) in
\(R_x(r_*/2)\), within the chosen holonomy domains.
The maps \(\kappa_x^{cs}\) and their inverses are
\(\alpha_0\)-H\"older with a common constant \(C_9\geqslant1\).
This constant may depend on \(\eps\) and the return box,
but not on \(x\in E\).

  Extend \(\phi_0\) by past returns: on
\(X_E=\bigcup_{n\geqslant0}f^nE\), set
\[
 \ell_0(x)=\max\{\ell\leqslant0:f^\ell x\in E\},\qquad
 \phi(x)=\phi_0(f^{\ell_0(x)}x),
\]
and put \(\phi=0\) off \(X_E\). Ergodicity gives \(m(X_E)=1\).
Since \(\mathcal P\) records membership in \(E\), \(\ell_0\)
is constant on each \(\eta_2\)-atom. Iterating
\eqref{eq:unstable-quotient}, as in
Section~\ref{subsec:adapted}, shows that \(\phi\) is a bounded
Borel coordinate with
\begin{equation}\label{eq:cs-coordinate-quotient}
 \sigma(\eta_1)=\sigma(\eta_2)\vee\phi^{-1}\B(\R^D).
\end{equation}
We henceforth restrict \(X\) to
\(\bigcap_{k\in\Z}f^kX_E\).

  Take \(a=4\eps/\alpha_0\). Partition a bounded box containing
\(\phi(M)\) into cubes of side \(e^{-an}\), denoting the
resulting finite partition by \(\mathcal D_n\), and set
\[
 \mathcal U_n=\eta_2\vee\phi^{-1}\mathcal D_n,\qquad
 \mathcal U_n^k=f^k\mathcal U_n\vee\eta_2
 \quad(0\leqslant k\leqslant n).
\]
Apply the representative argument of \cref{prop:transport-geometry}
with \(\Pi^{cs},\kappa^{cs}\) and exponent \(\alpha_0\).
For \(q=-\ell_0(x)\), the initial representatives at
\(f^{-q}x\) have size at most \(C_xe^{-a\alpha_0n}\), and
\eqref{eq:cs-growth} bounds their forward growth. For
\(z\in\mathcal U_n^k(f^kx)\), the point \(y=f^{-k}z\)
satisfies \(f^jy\in\eta_2(f^jx)\) for \(-q\leqslant j\leqslant k\),
as in \eqref{eq:whole-itinerary}. Hence the actual orbit stays
in the required charts, and \eqref{eq:cs-projection} identifies
its projected iterates. At the last return before time \(k\),
the upper H\"older bound for \(\kappa^{cs}\) gives
\begin{equation}\label{eq:cs-shrinking}
 \max_{0\leqslant k\leqslant n}
 \diam\phi\bigl(\mathcal U_n^k(f^kx)\bigr)
 \leqslant C_xe^{-\alpha_0(a\alpha_0-3\eps)n}
 =C_xe^{-\alpha_0\eps n}\longrightarrow0.
\end{equation}
Here \(C_x\) may depend on the past return time \(q\), but not
on \(n,k\); all diameters are taken after intersection with \(X\).

  We can now apply \cref{rem:cell-mass-inputs} with
\((\eta_i,\eta_{i-1},\tp^i,d_i)=(\eta_2,\eta_1,\phi,D)\)
and \((h_i,h_{i-1})=(h_m(f,\mathcal P),h_m^u(f))\).
Since \(\#\mathcal D_n\leqslant C_\eps e^{anD}\),
\eqref{eq:transverse-cell-count} gives
\[
 h_m(f,\mathcal P)-h_m^u(f)\leqslant aD.
\]
Consequently,
\[
 0\leqslant h_m(f)-h_m^u(f)
 \leqslant\eps+\frac{4D\eps}{\alpha_0}\longrightarrow0
 \qquad(\eps\downarrow0).
\]
For each \(\eps\), the \(n\to\infty\) limit is taken with
all auxiliary choices fixed; the exponent \(\alpha_0>0\) is
independent of \(\eps\).
\end{proof}

\subsection{The full entropy formula}\label{subsec:full-proof}

\begin{proof}[Proof of \cref{cor:full}]
If \(u=0\), the assertion follows from Ruelle's inequality.
Assume \(u\geqslant1\). The first-level argument of
\cite[Section~5.3]{Tong}, following
\cite[Section~10.1]{LY2}, uses only domination of
\(E^{u,1}\oplus E^{\widehat{(1)}}\) and the common exponent
\(\lambda_1\) on \(E^{u,1}\). It therefore gives
\[
 \ldel_1=\udel_1=\delta_1=\frac{h_1}{\lambda_1},\qquad
 0\leqslant\delta_1\leqslant d_1
\]
without a multiplicity restriction.

  We now use the induction of \cite[Section~5.3]{Tong}, replacing
its transverse formula by \cref{thm:main}. Each required
three-way splitting is a coarsening of \eqref{eq:full-splitting}.
If the conditional measures at level \(j-1\) are exact dimensional, then
\cref{thm:main} gives
\[
 \ldel_j=\udel_j
 =\delta_{j-1}+\frac{h_j-h_{j-1}}{\lambda_j},
 \qquad 2\leqslant j\leqslant u.
\]
Thus the conditional measures are exact dimensional at every level.
Setting \(\gamma_j=\delta_j-\delta_{j-1}\), we obtain
\[
 0\leqslant\gamma_j\leqslant d_j,\qquad
 h_j=\sum_{k=1}^j\lambda_k\gamma_k.
\]
Finally, \(E^u\oplus E^{cs}\) is also a coarsening of
\eqref{eq:full-splitting}. Hence \cref{thm:unstable} gives
\(h_m(f)=h_m^u(f)=h_u\), proving \eqref{eq:full-formula}.
\end{proof}


\begin{thebibliography}{99}
\setlength{\baselineskip}{11.7pt}

\bibitem{ABC}
F.~Abdenur, C.~Bonatti, and S.~Crovisier, \emph{Nonuniform hyperbolicity for
  {$C^1$}-generic diffeomorphisms}, Israel J. Math. \textbf{183} (2011), no.~1,
  1--60.

\bibitem{AV}
A.~Avila and M.~Viana, \emph{Extremal {Lyapunov} exponents: an invariance
  principle and applications}, Invent. Math. \textbf{181} (2010), no.~1,
  115--178.

\bibitem{BPS}
L.~Barreira, Ya.~B. Pesin, and J.~Schmeling, \emph{Dimension and product
  structure of hyperbolic measures}, Ann. of Math. (2) \textbf{149} (1999),
  no.~3, 755--783.

\bibitem{BOLeaf}
S.~Ben~Ovadia, \emph{Invariant family of leaf measures and the
  {Ledrappier--Young} property for hyperbolic equilibrium states}, Ergodic
  Theory Dynam. Systems \textbf{43} (2023), no.~11, 3603--3635.

\bibitem{BOTubular}
S.~Ben~Ovadia, \emph{Tubular dimension: leaf-wise asymptotic local product structure,
  and entropy and volume growth}, Comm. Math. Phys. \textbf{407} (2026), no.~8,
  Paper No.~179.

\bibitem{BOHR}
S.~Ben~Ovadia and F.~Rodriguez-Hertz, \emph{Neutralized local entropy and
  dimension bounds for invariant measures}, Int. Math. Res. Not. IMRN
  \textbf{2024} (2024), no.~11, 9469--9481.

\bibitem{BCS}
C.~Bonatti, S.~Crovisier, and K.~Shinohara, \emph{The {$C^{1+\alpha}$}
  hypothesis in {Pesin} theory revisited}, J. Mod. Dyn. \textbf{7} (2013),
  no.~4, 605--618.

\bibitem{Brown}
A.~Brown, \emph{Smoothness of stable holonomies inside center-stable
  manifolds}, Ergodic Theory Dynam. Systems \textbf{42} (2022), no.~12,
  3593--3618.

\bibitem{BW}
K.~Burns and A.~Wilkinson, \emph{On the ergodicity of partially hyperbolic
  systems}, Ann. of Math. (2) \textbf{171} (2010), no.~1, 451--489.

\bibitem{CWZ}
Y.~Cao, J.~Wang, and Y.~Zhao, \emph{Dimension approximation in smooth dynamical
  systems}, Ergodic Theory Dynam. Systems \textbf{44} (2024), no.~2, 383--407.

\bibitem{CCE}
E.~Catsigeras, M.~Cerminara, and H.~Enrich, \emph{The {Pesin} entropy formula
  for {$C^1$} diffeomorphisms with dominated splitting}, Ergodic Theory Dynam.
  Systems \textbf{35} (2015), no.~3, 737--761.

\bibitem{CP}
S.~Crovisier and M.~Poletti, \emph{Invariance principle and non-compact center
  foliations}, 2023, Preprint, arXiv:2210.14989, version 2.

\bibitem{DGT}
N.~Ding, S.~Gan, and Y.~Tong, \emph{{SRB} measures for
  {$C^{1+\mathrm{Dini}}$} diffeomorphisms}, 2026, Preprint,
  arXiv:2607.13530, version 1.

\bibitem{DQ}
C.~Dong and Q.~Qiao, \emph{On {$r$}-neutralized entropy: entropy formula and
  existence of measures attaining the supremum}, Comm. Math. Phys. \textbf{406}
  (2025), no.~4, Paper No.~74.

\bibitem{ER}
J.-P. Eckmann and D.~Ruelle, \emph{Ergodic theory of chaos and strange
  attractors}, Rev. Mod. Phys. \textbf{57} (1985), no.~3, 617--656.

\bibitem{Epstein}
D.~B.~A. Epstein, \emph{Foliations with all leaves compact},
  Ann. Inst. Fourier (Grenoble) \textbf{26} (1976), no.~1, 265--282.

\bibitem{GTY}
S.~Gan, Y.~Tong, and J.~Yang, \emph{{Ledrappier--Young} entropy formula for
  {$C^1$} diffeomorphisms with dominated splitting. {Part 1}: Unstable entropy
  formula and invariance principle}, 2025, Preprint, arXiv:2509.16981, version
  3.

\bibitem{HPS}
M.~W. Hirsch, C.~C. Pugh, and M.~Shub, \emph{Invariant manifolds}, Lecture
  Notes in Mathematics, vol. 583, Springer-Verlag, Berlin--Heidelberg, 1977.

\bibitem{HHW}
H.~Hu, Y.~Hua, and W.~Wu, \emph{Unstable entropies and variational principle
  for partially hyperbolic diffeomorphisms}, Adv. Math. \textbf{321} (2017),
  31--68.

\bibitem{KH}
A.~Katok and B.~Hasselblatt, \emph{Introduction to the modern theory of
  dynamical systems}, Encyclopedia of Mathematics and its Applications,
  vol.~54, Cambridge University Press, Cambridge, 1995.

\bibitem{LedrappierCocycles}
F.~Ledrappier, \emph{Positivity of the exponent for stationary sequences
  of matrices}, in \emph{Lyapunov exponents (Bremen, 1984)}, Lecture Notes
  in Mathematics, vol.~1186, Springer-Verlag, Berlin, 1986, pp.~56--73.

\bibitem{LedrappierSinai}
F.~Ledrappier, \emph{Propri{\'e}t{\'e}s ergodiques des mesures de
  {Sina{\"\i}}}, Publ. Math. Inst. Hautes \'Etudes Sci. \textbf{59} (1984),
  163--188.

\bibitem{LS}
F.~Ledrappier and J.-M. Strelcyn, \emph{A proof of the estimation from below in
  {Pesin}'s entropy formula}, Ergodic Theory Dynam. Systems \textbf{2} (1982),
  no.~2, 203--219.

\bibitem{LW}
F.~Ledrappier and P.~Walters, \emph{A relativised variational principle
  for continuous transformations}, J. London Math. Soc. (2)
  \textbf{16} (1977), no.~3, 568--576.

\bibitem{LX}
F.~Ledrappier and J.-S. Xie, \emph{Vanishing transverse entropy in smooth
  ergodic theory}, Ergodic Theory Dynam. Systems \textbf{31} (2011), no.~4,
  1229--1235.

\bibitem{LY1}
F.~Ledrappier and L.-S. Young, \emph{The metric entropy of diffeomorphisms.
  {Part I}. {Characterization} of measures satisfying {Pesin}'s entropy
  formula}, Ann. of Math. (2) \textbf{122} (1985), no.~3, 509--539.

\bibitem{LY2}
F.~Ledrappier and L.-S. Young, \emph{The metric entropy of diffeomorphisms. {Part II}. {Relations}
  between entropy, exponents and dimension}, Ann. of Math. (2) \textbf{122}
  (1985), no.~3, 540--574.

\bibitem{LTTY}
G.~Liao, H.~Tao, Y.~Tong, and J.~Yang, \emph{Continuity properties of
  partial entropy}, 2026, Preprint, arXiv:2605.13273, version 1.

\bibitem{LVY}
G.~Liao, M.~Viana, and J.~Yang, \emph{The entropy conjecture for
  diffeomorphisms away from tangencies}, J. Eur. Math. Soc. (JEMS) \textbf{15}
  (2013), no.~6, 2043--2060.

\bibitem{Liu03}
P.-D. Liu, \emph{{Ruelle} inequality relating entropy, folding entropy and
  negative {Lyapunov} exponents}, Comm. Math. Phys. \textbf{240} (2003), no.~3,
  531--538.

\bibitem{Liu08}
P.-D. Liu, \emph{Invariant measures satisfying an equality relating entropy,
  folding entropy and negative {Lyapunov} exponents}, Comm. Math. Phys.
  \textbf{284} (2008), no.~2, 391--406.

\bibitem{LiuXie}
P.-D. Liu and J.-S. Xie, \emph{Dimension of hyperbolic measures of random
  diffeomorphisms}, Trans. Amer. Math. Soc. \textbf{358} (2006), no.~9,
  3751--3780.

\bibitem{Mane}
R.~Ma{\~n}{\'e}, \emph{A proof of {Pesin}'s formula}, Ergodic Theory Dynam.
  Systems \textbf{1} (1981), no.~1, 95--102.

\bibitem{Oseledets}
V.~I. Oseledets, \emph{A multiplicative ergodic theorem. {Lyapunov}
  characteristic numbers for dynamical systems}, Trans. Moscow Math. Soc.
  \textbf{19} (1968), 197--231.

\bibitem{Pesin}
Ya.~B. Pesin, \emph{Characteristic {Lyapunov} exponents and smooth ergodic
  theory}, Russian Math. Surveys \textbf{32} (1977), no.~4, 55--114.

\bibitem{Pugh}
C.~C. Pugh, \emph{The {$C^{1+\alpha}$} hypothesis in {Pesin} theory}, Publ.
  Math. Inst. Hautes \'Etudes Sci. \textbf{59} (1984), 143--161.

\bibitem{QQX}
M.~Qian, M.-P. Qian, and J.-S. Xie, \emph{Entropy formula for random dynamical
  systems: relations between entropy, exponents and dimension}, Ergodic Theory
  Dynam. Systems \textbf{23} (2003), no.~6, 1907--1931.

\bibitem{QX}
M.~Qian and J.-S. Xie, \emph{Entropy formula for endomorphisms: relations
  between entropy, exponents and dimension}, Discrete Contin. Dyn. Syst.
  \textbf{21} (2008), no.~2, 367--392.

\bibitem{QZ}
M.~Qian and S.~Zhu, \emph{{SRB} measures and {Pesin}'s entropy formula for
  endomorphisms}, Trans. Amer. Math. Soc. \textbf{354} (2002), no.~4,
  1453--1471.

\bibitem{Rokhlin}
V.~A. Rokhlin, \emph{Lectures on the entropy theory of measure-preserving
  transformations}, Russian Math. Surveys \textbf{22} (1967), no.~5, 1--52.

\bibitem{Ruelle}
D.~Ruelle, \emph{An inequality for the entropy of differentiable maps}, Bol.
  Soc. Brasil. Mat. \textbf{9} (1978), no.~1, 83--87.

\bibitem{Saghin}
R.~Saghin, \emph{On invariant holonomies between centers}, Ergodic Theory
  Dynam. Systems \textbf{45} (2025), no.~1, 274--293.

\bibitem{Shu09}
L.~Shu, \emph{The metric entropy of endomorphisms}, Comm. Math. Phys.
  \textbf{291} (2009), no.~2, 491--512.

\bibitem{Shu10}
L.~Shu, \emph{Dimension theory for invariant measures of endomorphisms}, Comm.
  Math. Phys. \textbf{298} (2010), no.~1, 65--99.

\bibitem{SunTian}
W.~Sun and X.~Tian, \emph{Dominated splitting and {Pesin}'s entropy formula},
  Discrete Contin. Dyn. Syst. \textbf{32} (2012), no.~4, 1421--1434.

\bibitem{Tahzibi}
A.~Tahzibi, \emph{{$C^1$}-generic {Pesin}'s entropy formula}, C. R. Math. Acad.
  Sci. Paris \textbf{335} (2002), no.~12, 1057--1062.

\bibitem{TY}
A.~Tahzibi and J.~Yang, \emph{Invariance principle and rigidity of high entropy
  measures}, Trans. Amer. Math. Soc. \textbf{371} (2019), no.~2, 1231--1251.

\bibitem{Tong}
Y.~Tong, \emph{{Ledrappier--Young} entropy formula for {$C^1$} diffeomorphisms
  with dominated splitting. {Part 2}: Entropy formulas and measure dimension},
  2026, Preprint, arXiv:2606.13384, version 1.

\bibitem{UVYYTransverse}
R.~Ures, M.~Viana, F.~Yang, and J.~Yang, \emph{Maximal transverse measures of
  expanding foliations}, Comm. Math. Phys. \textbf{405} (2024), no.~5, Paper
  No.~121.

\bibitem{UVYYu}
R.~Ures, M.~Viana, F.~Yang, and J.~Yang, \emph{Thermodynamical {$u$}-formalism {I}: measures of maximal
  {$u$}-entropy for maps that factor over {Anosov}}, Ergodic Theory Dynam.
  Systems \textbf{44} (2024), no.~1, 290--333.

\bibitem{WC}
J.~Wang and Y.~Cao, \emph{The {Hausdorff} dimension estimation for an ergodic
  hyperbolic measure of {$C^1$}-diffeomorphism}, Proc. Amer. Math. Soc.
  \textbf{144} (2016), no.~1, 119--128.

\bibitem{WWZ}
X.~Wang, L.~Wang, and Y.~Zhu, \emph{Formula of entropy along unstable
  foliations for {$C^1$} diffeomorphisms with dominated splitting}, Discrete
  Contin. Dyn. Syst. \textbf{38} (2018), no.~4, 2125--2140.

\bibitem{Yang}
J.~Yang, \emph{Entropy along expanding foliations}, Adv. Math. \textbf{389}
  (2021), Paper No.~107893.

\bibitem{Young}
L.-S. Young, \emph{Dimension, entropy and {Lyapunov} exponents}, Ergodic Theory
  Dynam. Systems \textbf{2} (1982), no.~1, 109--124.

\end{thebibliography}
\end{document}